\documentclass[11pt]{article}
\usepackage{amsmath,amssymb,amsthm,bbm,srcltx,graphicx,color,a4wide,xr, setspace}
\usepackage[colorlinks]{hyperref}
\usepackage[shortlabels]{enumitem}
\usepackage{tikz}
\usepackage{tikz}
\usetikzlibrary{arrows}
\numberwithin{equation}{section}
\parskip .2cm

\newcommand{\bonote}[1]{\marginpar{\textcolor{magenta}{\small{Bo:\;#1}}}}

\newcommand{\dto}{\stackrel{d}{\longrightarrow}}

\newcommand{\fidi}{\stackrel{\mathrm{fi.di.}}{\longrightarrow}}
\newcommand{\toi}{\to\infty}

\def\nn{\nonumber}
\def\({\left(}
\def\){\right)}

\newcommand\1[1]{\mathds{1}_{\{#1\}}}

\newcommand{\EE}{\mathbb{E}}
\newcommand{\NN}{\mathbb{N}}
\newcommand{\cN}{{\cal N}}
\newcommand{\N}{\mathbb{N}}
\newcommand{\PP}{\mathbb{P}} 
\newcommand{\RR}{\mathbb{R}} 
 
\newcommand{\ZZ}{\mathbb{Z}} 
\newcommand{\calC}{\mathcal{C}}

\newcommand{\cc}{\mathsf{c}}

\newcommand{\ff}{\infty} 
\newcommand{\vep}{\varepsilon}
\newcommand{\tvep}{\tilde\varepsilon}

\newcommand{\law}{\mathcal{L}}

\newcommand{\bsH}{\boldsymbol{H}}

\newcommand{\bsX}{\boldsymbol{X}}
\newcommand{\bsY}{\boldsymbol{Y}}
\newcommand{\bsZ}{\boldsymbol{Z}}
\newcommand{\bsQ}{\boldsymbol{Q}}

\newcommand{\hA}{\boldsymbol{\hat{A}}}
\newcommand{\bsA}{\boldsymbol{A}}
\newcommand{\hB}{\boldsymbol{\hat{B}}}

\newcommand{\ccC}{\boldsymbol{{\cal C}}}
\newcommand{\bsB}{\boldsymbol{B}}

\newcommand{\bsE}{\boldsymbol{E}}
\newcommand{\bsO}{\boldsymbol{0}}

\newcommand{\by}{\boldsymbol{y}}
\newcommand{\bx}{\boldsymbol{x}}

\newcommand{\tilE}{\widetilde{E}}
\newcommand{\VT}{\, \vert \,}

\newcommand{\dsum} {\displaystyle\sum}

\newcommand{\lo}{\tilde{l}_0}
\newcommand{\lom}{\tilde{l}_{0,m}}

\def\PPC{N}

\def\t{\tilde}
\def\h{\hat}

\def\iid{i.i.d.}

\usepackage{cleveref}

\newtheorem{theorem}{Theorem}[section]
\newtheorem{lemma}[theorem]{Lemma}
\newtheorem{corollary}[theorem]{Corollary}
\newtheorem{proposition}[theorem]{Proposition}

\theoremstyle{remark}
\newtheorem{remark}[theorem]{Remark}
\newtheorem{example}[theorem]{Example}

\crefname{hypothesis}{Assumption}{Assumptions}

\definecolor{mygray}{gray}{0.6}
\usepackage{comment}
\usepackage{ifthen}
\usepackage{dsfont}
\usepackage{hyperref}
\hypersetup{
urlcolor=black, 
  menucolor=black, 
  citecolor=black, 
  anchorcolor=black, 
  filecolor=black, 
  linkcolor=black, 
  colorlinks=true,
}
\definecolor{darkblue}{rgb}{.1, 0.1,.8}
\definecolor{darkgreen}{rgb}{0,0.8,0.2}
\definecolor{darkred}{rgb}{.8, .1,.1}
\newcommand{\red}{\color{darkred}}

\newcommand{\cid}{\stackrel{d}{\rightarrow}}
\newcommand{\cip}{\stackrel{\P}{\rightarrow}}

\newcommand{\hX} {\hat{\X}}

\newcommand{\bth}{\begin{theorem}}
\newcommand{\ethe}{\end{theorem}}

\newcommand{\bre}{\begin{remark}\em }
\newcommand{\ere}{\end{remark}}

\newcommand{\ble}{\begin{lemma}}
\newcommand{\ele}{\end{lemma}}

\newcommand{\bde}{\begin{definition}}
\newcommand{\ede}{\end{definition}}
\newcommand{\bco}{\begin{corollary}}
\newcommand{\eco}{\end{corollary}}

\newcommand{\bpr}{\begin{proposition}}
\newcommand{\epr}{\end{proposition}}

\newcommand{\bexer}{\begin{exercise}}
\newcommand{\eexer}{\end{exercise}}

\newcommand{\bexam}{\begin{example}}
\newcommand{\eexam}{\end{example}}

\newcommand{\bfi}{\begin{fig}}
\newcommand{\efi}{\end{fig}}

\newcommand{\btab}{\begin{tab}}
\newcommand{\etab}{\end{tab}}

\newcommand{\rhs}{right-hand side}

\newcommand{\beao}{\begin{eqnarray*}}
\newcommand{\eeao}{\end{eqnarray*}\noindent}

\newcommand{\beam}{\begin{eqnarray}}
\newcommand{\eeam}{\end{eqnarray}\noindent}

\newcommand{\beqq}{\begin{equation}}
\newcommand{\eeqq}{\end{equation}\noindent}

\newcommand{\bce}{\begin{center}}
\newcommand{\ece}{\end{center}}

\newcommand{\barr}{\begin{array}}
\newcommand{\earr}{\end{array}}

\newcommand{\vague}{\stackrel{\lower0.2ex\hbox{$\scriptscriptstyle
                    \it{v} $}}{\rightarrow}}
\newcommand{\weak}{\stackrel{\lower0.2ex\hbox{$\scriptscriptstyle
                    \it{w} $}}{\rightarrow}}
\newcommand{\what}{\stackrel{\lower0.2ex\hbox{$\scriptscriptstyle
                    \it{\hat{w}} $}}{\rightarrow}}

\newcommand{\bdis}{\begin{displaymath}}
\newcommand{\edis}{\end{displaymath}\noindent}

\renewcommand{\P}{\mathbb P}

\newcommand{\R}{\mathbb{R}}

\newcommand{\nto}{n\to\infty}

\newcommand{\la}{\lambda}

\newcommand{\bfS}{{\bf S}}

\def\E{{\mathbb E}}

\def\P{{\mathbb{P}}}
\def\R{\mathbb{R}}
\def\Z{\mathbb{Z}}

\renewcommand{\le}{\ensuremath{\leqslant}}
\renewcommand{\leq}{\ensuremath{\leqslant}} 
\renewcommand{\geq}{\ensuremath{\geqslant}} 
\renewcommand{\ge}{\ensuremath{\geqslant}}

\def\iid{independent and identically distributed}
\newcommand{\indicator}{\mathds{1}}

\newcommand{\bfH}{{\mathbf H}}

\newcommand{\bfQ}{{\mathbf Q}}

\renewcommand{\P}{\mathbb{P}}

\newcommand{\Frechet}{Fr\'{e}chet }

\DeclareMathOperator{\e}{e}

\newcommand{\X}{{\mathbf X}}

\newcommand{\A}{{\mathbf A}}

\newcommand{\norm}[1]{\|#1\|}

\newcommand{\eid}{\stackrel{\rm d}{=}}

\begin{document}

	\title{Extreme eigenvalue statistics of $m$-dependent heavy-tailed matrices} 
	
	\author{Bojan Basrak\thanks{Department of Mathematics, University of Zagreb, Bijeni\v cka 30,
			Zagreb, Croatia; {\tt bbasrak@math.hr }} \and
		Yeonok Cho\thanks{Department of Mathematical Sciences, KAIST, Daejeon, South Korea; {\tt imyo@kaist.ac.kr}}
		\and
		Johannes Heiny\thanks{Department of Mathematics, Ruhr-University Bochum, Universit\"atsstra{\ss}e 150, Bochum, Germany; {\tt johannes.heiny@rub.de}}
		\and
		Paul Jung\thanks{Department of Mathematical Sciences, KAIST, Daejeon, South Korea; {\tt pauljung@kaist.ac.kr}}}
	
	\maketitle
	
	\begin{abstract}
		{
		We analyze the largest eigenvalue statistics of $m$-dependent heavy-tailed Wigner matrices as well as the associated sample covariance matrices having entry-wise regularly varying tail distributions with parameter $\alpha\in (0, 4)$. Our analysis extends results in the previous literature for the corresponding random matrices with independent entries above the diagonal, by allowing for $m$-dependence between the entries of a given matrix. We prove that the limiting point process of extreme eigenvalues is a Poisson cluster process. \vspace*{0.3cm}

Nous analysons les plus grandes valeurs propres d'une matrice de Wigner avec entrées $m$-dépendantes et à queue lourde, de même que pour une matrice de covariance  associée avec entrées de variation régulière de paramètre $\alpha\in (0, 4)$. Notre analyse étend les résultats existants  pour ces matrices aléatoires avec entrées indépendantes à des entrées $m$-dépendantes.  Nous prouvons que le processus ponctuel limite des plus grandes valeurs propres est un processus de Poisson groupé.}
		
		{MSC: Primary 60B20; Secondary 60F05 60F10 60G10 60G55 60G70}
		
		{Keywords: Dependent random matrices, \and largest eigenvalue, \and heavy-tailed random matrices, \and Poisson cluster process, \and marked Poisson process, \and regular variation, \and Wigner matrix, \and
			sample covariance matrix.}
	\end{abstract}
	

	\section{Introduction}	

The phenomenon of universality for extreme eigenvalues of $n\times n$ symmetric random matrices $\hX$ (in this paper, the ``hat'' will denote a {\it symmetric} matrix), with i.i.d.\ real-valued entries on and above the diagonal, dates back to the seminal work of Soshnikov in \cite{soshnikov1999universality}. In that paper, and in several subsequent papers by varying authors culminating in \cite{lee2014}, it was shown that, as $n$ tends to infinity, {the distribution of the properly normalized largest eigenvalue of these matrices $\hX$, which are called Wigner matrices in the random matrix literature, converges to a Tracy--Widom distribution with parameter $1$, if and only if the matrix entries (with generic element $X$) satisfy
\begin{equation}\label{cond1234}
\lim_{x \to \infty} x^4 \P(|X|>x) = 0\,.
\end{equation}
 In particular, universality means that, as long as \eqref{cond1234} holds, the asymptotic fluctuations of the largest eigenvalues do not depend on the entry distribution. In this sense, the behavior of the largest eigenvalues (sometimes referred to as the right edge of the spectrum of $\hX$) is universal.}

	For Wigner matrices with heavy-tailed entries (in this paper, this will mean $\E[X^4]=\infty$), the behavior of the largest eigenvalues is not universal in general. 
For regularly varying entry distributions with exponent $\alpha\in(0,2)$, i.e.,  
	\begin{equation}\label{regvar}
\P(|X|>x) = x^{-\alpha} \widetilde{L}(x) 
\end{equation}
and $\widetilde{L}$ being a slowly varying function (at infinity), Soshnikov \cite{soshnikov:2004} showed that the limit of the largest eigenvalues depends on $\alpha$. More precisely, he proved that the point processes of properly normalized positive eigenvalues of the Wigner matrices converge in distribution to a Poisson point process on $(0,\ff)$ with 
	intensity  $\alpha x^{-1-\alpha}$. Auffinger et al.~\cite{auffinger:arous:peche:2009} extended this result to regularly varying entry distributions with index $\alpha\in[2,4)$. Similar results were obtained under additional assumptions such as sparsity and band structure; see for example \cite{benaych:peche:2014, auffinger:tang:2016}.

Regarding heavy-tailed sample covariance matrices of the form $\X\X'$, \cite{auffinger:arous:peche:2009} derived the limiting point process of suitably normalized eigenvalues in the case of {i.i.d.} entries.  By employing a large deviations approach, \cite{heiny:mikosch:2017:iid} allowed for more general growth rates of the dimension with respect to the sample size. 
When the entries of $\X$ are linear processes in space and time, \cite{davis2016extreme} have shown that the point process of eigenvalues converges to a Poisson cluster process; see also \cite{davis:mikosch:pfaffel:2016,davis:pfaffel:stelzer:2014, heiny:mikosch:2019:stochvol} for similar results.

As seen in \cite{auffinger:arous:peche:2009}, the properly normalized largest eigenvalues of Wigner and sample covariance matrices with regularly varying entries and $\alpha\in (0,4)$ are asymptotically \Frechet distributed with parameters $\alpha$ and $\alpha/2$, respectively. At $\alpha=4$ a phase transition occurs.

With regards to the empirical spectral distributions (e.s.d.) of $\hX$, which is defined as the distribution with point masses $1/n$ at every eigenvalue of $\hX$, the critical exponent of the tail is $2$ rather than $4$. For $\alpha>2$, it is well known that the e.s.d. of $\h{\bsX}/\sqrt{n}$ converges to the semi-circle distribution \cite{anderson2010introduction}. In contrast, if the entries are regularly varying with $\alpha\in(0,2)$, \cite{benarous2008spectrum, bordenave2011spectrum} have shown that the e.s.d. of suitably normalized $\h{\bsX}$ converges to a heavy-tailed probability measure with index $\alpha$. In the critical case $\alpha=2$, the e.s.d. converges to the semi-circle distribution \cite{jung2016levy}. The critical tail exponent for the e.s.d. of $\X\X'/n$ is also $2$  with the Marchenko-Pastur distribution taking the place of the semi-circle distribution for $\alpha>2$ and limiting e.s.d. which is heavy-tailed when $\alpha<2$  \cite{belinschi2009spectral}.

\subsection{Model and notation}\label{sec:model}	 	

While most of the arguments in this work can be adapted to matrices with complex-valued entries, the applications we are aiming for concern matrices with real entries, thus for simplicity we will assume all random variables in this work to be real-valued. We say that a random field $(X_{ij})_{(i,j) \in \mathcal{M}^2}$ with $\mathcal{M}\subseteq \Z$ is  $m$-dependent if for any subsets $A,B \subseteq \mathcal{M}^2$ with the property that $\max\{|i-k|,|j-\ell|\}>m$, whenever $(i,j) \in A, \, 
	(k,l)\in B$, the families of random variables $(X_{ij})_{(i,j)\in A}$ and  $(X_{kl})_{(k,l)\in B}$ are independent.
	
Throughout this paper we consider a stationary $m$-dependent random field $\bsX^{\infty}=(X_{ij})_{i,j \ge 1}$ with generic entry $X$, i.e., $X\eid X_{11}$, where $\eid$ denotes equality in distribution. 
	We impose the regular variation condition \eqref{regvar} with $\alpha \in (0,4)$. Choose a sequence $(a_n)$ such that
\begin{equation}\label{eq:defan}
n\, \P(|X| > a_n )\to 1\,,\qquad \nto\,.
\end{equation}
It is well known that $a_n=n^{1/\alpha} \ell(n)$, where $\ell$ is some slowly varying function.

Next, we reflect the upper triangular array $(X_{ij})_{i\le j}$ over the diagonal to obtain a symmetrized field $\h\bsX^{\infty}=(\h X_{ij})_{i,j\ge 1}$, where $\h X_{ij}=\h X_{ji}$ (the ``hat'' denotes the symmetrization via reflection). In the sequel, a boldface uppercase variable represents a doubly-indexed array of random variables -- in particular random matrices and/or random fields.

We define the Hermitian random matrices
\begin{equation}\label{eq:defwigner}
\hA=\hA_n= (\h X_{ij}/a_{n^2})_{1\le i,j\le n}\,, \qquad n\ge 1\,.
\end{equation}
Note that if $\bsX^{\infty}$ is an {i.i.d.} field, then $\hA$ is a (classical) Wigner matrix. For simplicity, we will also refer to $\hA$ as Wigner matrix if $\bsX^{\infty}$ is not an {i.i.d.} field.

For an integer sequence $p=p_n$ satisfying $p/n\to\gamma\in(0,\infty)$, we consider the data matrices
\begin{equation}\label{eq:defdata}
\A=\A_n= (X_{ij}/a_{np})_{1\le i\le p; 1\le j\le n}\,, \qquad n\ge 1\,.
\end{equation} 
and form the $p\times p$ {\em sample covariance matrices} $\A \A'$.

For any Hermitian matrix $\bfH$ we denote its ordered eigenvalues and singular values by $\la_1(\bfH) \ge \la_2(\bfH)\ge \cdots$ and $\sigma_1(\bfH) \ge \sigma_2(\bfH)\ge \cdots$, respectively. The spectral norm of a matrix $\bfH$ is defined as $\norm{\bfH}:= \sigma_1(\bfH)=\sqrt{\la_1(\bfH \bfH')}$.

\subsection{Objective and structure of this paper}
 
	In this work, we prove a certain universality of limiting extreme eigenvalues for heavy-tailed random matrices with $m$-dependent entries. To our knowledge, these are the first results regarding edge-universality of random matrices in the {general} $m$-dependent case. As we will see, the dependence between entries complicates the analysis considerably; however, the heavy-tailed condition \eqref{regvar} on the entries will allow us  to get a handle on the dependence. One motivating example of dependence between matrix elements, particularly in the heavy-tailed case, are (squared) sample covariance matrices of log-returns for the S\&P 500. In this case, one should not expect the returns of stocks in the same sector to be independent from each other, and the parameter $m$ can be thought of as being related to the number of stocks in a sector. We refer to \cite{davis2016extreme}  for more details, supporting data, and other motivating examples of dependent random matrices.
	
The main result of this paper roughly stated is, as $n\to\ff$:	
	\begin{align*}
	&\textit{The limiting largest eigenvalues of}  \  (\hA)_{n\in\N}  \ \textit{ and } \ (\A\A')_{n\in\N} \textit{ converge}\\
	&\textit{in distribution to the largest points of certain Poisson cluster processes.}
	\end{align*} 
	{Poisson cluster processes can be viewed as marked Poisson processes with markings which are clusters of points.}
	In order to state a precise detailed form and structure of the Poisson cluster processes in the above statement (as well as a precise definition of such processes), 
	we will first need to present some theory on regularly varying random fields.
	This will be done in  Section \ref{sec:random fields} (this section can be skimmed on a first reading, and referred back to as needed). After presenting this theory of regularly varying fields we will be able to state, in Section \ref{sec:refl}, a precise version of the above result concerning the largest eigenvalue statistics for a sequence of $m$-dependent heavy-tailed Wigner random matrices. In Section \ref{sec:cov}, we will state an analogous result for a sequence of $m$-dependent heavy-tailed sample covariance matrices. Before getting too technical, in Section \ref{sec:example},
	 we present motivating examples of $m$-dependent matrix toy models. 
	 Finally, in Sections \ref{sec:proofwigner} and \ref{sec:proofcov} we provide the proofs of our main results, Theorems \ref{tm:ReflMat} and \ref{tm:CovMat}, respectively.

We wrap up this introduction with a very brief top-level overview of our proof strategy.
The basic idea of \cite{soshnikov:2004} (and later \cite{auffinger:arous:peche:2009}) was to show that the extreme values of the independent matrix entries in the upper triangle are asymptotically equal in distribution to the extreme eigenvalues.
 Since we consider matrices where the entries are $m$-dependent, rather than independent, 
 an excessively large entry can affect several nearby entries and hence the maximal eigenvalues cannot simply be approximated by the extreme values of entries. To handle this problem, we employ multi-scale analysis and decompose our matrix into $k_{n}^2$ blocks of the same size $r_{n}\times r_{n}$ such that $k_{n},r_{n}\to\infty$ with some specified orders. Since the size of each block increases to infinity, dependence between the matrix entries stays within each block (except at the edges of the blocks, which are negligible), asymptotically. Decomposing our matrix into blocks, we now can compare the behavior of extreme eigenvalues to the behavior of the ``extreme blocks''. For the distribution within each block, we rely on results of Basrak, Planini\'{c}, and Soulier from \cite{basrak2016invariance}, in which the authors studied the limiting behavior of $m$-dependent stationary and regularly varying random fields (this is described in Section \ref{sec:random fields}).

In order to handle the dependence between different blocks of our random matrices at the level of the spectrum, we adapt an approach introduced in \cite{auffinger:arous:peche:2009}.  Roughly, we truncate the matrices by removing all small entries, so that, with high probability, only one block remains in any row of blocks or column of blocks after the truncation. Then we bound the operator norm of the truncated portion, so that the contribution of truncated blocks towards the spectrum, as well as their effect on the remaining blocks containing large entries,  is negligible by an application of Weyl's eigenvalue-perturbation inequality. We remark that showing that the operator norm of the truncated portion is negligible, even when replacing independence with $m$-dependence, is perhaps the technically most difficult portion of the overall proof-- this is presented in Section \ref{sec:proofwigner} which comprises the main mathematical contributions of this paper.

	\section{Background: regularly varying random fields}\label{sec:random fields}
A stationary random field $\bsX^{\infty}=(X_{ij})_{i,j\in \NN}$ is regularly varying if all the finite-dimensional vectors
	are regularly varying, see \cite{davis:hsing:1995} for
	instance. 	Recall that a $d$-dimensional random vector
	$\vec{V}$ is regularly varying with index $\alpha > 0$ if there exists a random vector
	$\vec{\Theta}$ on the unit sphere in $\mathbb{R}^{d}$ such that
	\begin{equation}
	\label{def:RV_d}
	\PP(\|\vec{V}\| > ux, \vec{V} / \|\vec{V}\| \in \cdot )/
	{\PP(\|\vec{V}\| > x)} \Rightarrow u^{-\alpha} \PP(\vec{\Theta} \in \cdot),
	\end{equation}
	for every $u>0$ as $x \toi$, and $\Rightarrow$ denotes the weak convergence of measures.  
	{
	By the regular variation of the one-dimensional marginal distributions of the stationary field $\bsX^{\infty}$, there exists a sequence $(a_n)_n$, $a_n\toi$, and a constant {$\rho\in [0,1]$} 
	 and
	Radon measure $\mu$ on $\overline{\RR}\setminus\{0\}$ given by
	\begin{equation*}
	\mu(dy)=\rho\alpha y^{-\alpha-1} \indicator_{(0,\infty)}(y)dy + (1-\rho)\alpha (-y)^{-\alpha-1} \indicator_{(-\infty,0)}(y)dy
	\end{equation*}}
	 such that a random variable $X$
	with the same distribution as the $X_{ij}$'s satisfies
	$$n\PP(X /a_n \in \cdot) \stackrel{v}{\longrightarrow} \mu\,,$$
		where $\stackrel{v}{\longrightarrow}$ denotes vague convergence on $\overline{\RR}\setminus \{0\}$. 
	In particular, $n\PP(|X| > a_n u)\to u^{-\alpha}$ for all $u>0$.
			
	It is convenient to extend $\bsX^{\infty}$ to be a stationary regularly varying random field indexed over 
	the integer lattice $\ZZ^2$.
	By results of \cite{basrak:segers:2009,basrak:planinic:2020}, the regular variation of the stationary field
	$\bsX^{\infty}$ 
	is equivalent to the existence of a tail random field
	denoted by  $\bsY=(Y_{ij})_{i,j\in\ZZ}$, which
	satisfies $\PP(|Y_{00}| > y)=y^{-\alpha}$ for $y \geq 1$ and, as $x\to \infty$,
	\begin{align}
	\label{eq:tailprocess}
	\left ( \{x^{-1}X_{ij}, i,j \in \ZZ\} \, \big| \, |X_{00}| > x \right)
	\fidi \{Y_{ij},i,j \in \ZZ\} \; ,
	\end{align}
	where $\fidi$ denotes convergence of the finite-dimensional distributions. Moreover, the so-called
	spectral tail process $\{\Theta_{ij},i,j \in \ZZ\}$, defined by $\Theta_{ij} := Y_{ij}/|Y_{00}|$, $i,j \in \ZZ$, 
	turns out to be independent of $|Y_{00}|$ and satisfies, as $x\toi$,
	\begin{equation}
	\label{eq:theta}
	\left( \{|X_{00}|^{-1} X_{ij}, {i, j} \in \ZZ\} \, \big| \, |X_{00}| > x \right)
	\fidi \{\Theta_{ij}, i,j \in \ZZ\} \,. 
	\end{equation}

	Consider now  the restriction of such a field to a rectangular area, say of size $n\times n$ for simplicity.
	As in \cite{basrak:planinic:2020}, one can  study the growth of the values in the increasing squares $(X_{ij})_{1\leq i , j\leq n}$.
	In order to obtain a nontrivial asymptotic theory, it is necessary to restrict the dependence in the array $(X_{ij})$.  Therefore, we assume throughout that the array $(X_{ij})_{i,j\in \ZZ}$ is $m$-dependent for some nonnegative integer $m$.  It is known (cf. \cite[Theorem 2.1]{bradley2005basic})  that  this notion of $m$--dependence on the lattice $\ZZ^2$ is actually equivalent to the $\beta$--mixing condition.
%
%
The property of $m$-dependence implies the crucial fact that 	
	\begin{align}\label{eq: 0fdd}
	Y_{ij} = 0 \ \text{almost surely, if} \max\{|i|,|j|\}>m.
	\end{align}	
	Moreover, by the same token, for any $\vep >0$ and indices $(i,j)$
	and $(k,\ell)$ such that $\max\{|i-k|,|j-\ell|\}>m$, we have
	 $\P( |Y_{ij}| > \vep \,, |Y_{k\ell}| > \vep ) =0$. In other words $(Y_{ij})_{i,j}$ has no two nonzero elements with indices separated by more than $m$.

	
	Let $l_0$ be the space of real--valued arrays indexed over $\ZZ^2$ and converging to zero away from the origin, i.e., $$l_0 :=
	\{\bx= ({x}_{ij})_{i,j\in\ZZ} : \lim_{|(i,j)|\to\infty}x_{ij}= 0\}.$$
	As explained above, with probability one, $(Y_{ij})_{i,j}\in  l_{0,m} \subseteq l_0$  where
\begin{align}\label{def:lom}
 l_{0,m} :=  &
\{\bx= ({x}_{ij}) \in l_0  :  \mbox{there exist no two indices $(i,j)$
	and $(k,\ell)$ such that} \\ \nn 
& |x_{ij}| \not =0, |x_{k\ell}| \not = 0 \text{ and }\mbox{$\max\{|i-k|,|j-\ell|\}>m$ }
   \}.
\end{align}
If we endow $l_0$ with  the uniform norm
$	\|\bx\|_\infty = \sup_{i,j\in\ZZ} |x_{ij}| \; ,$
it becomes a separable Banach space.
	
		Define shift operators $\tau,\tau'$ on $l_0$ by $(\tau\bx)_{i,j} = (x_{i+1,j})_{i,j}$ and $(\tau'\bx)_{i,j} = (x_{i,j+1})_{i,j}$. Introduce an
		equivalence relation $\sim$ on $l_0$ by letting $\bx \sim \by $ if $\by=\tau^k\tau'^l\bx$ for some
		$k,l\in\ZZ$.  In the sequel, we consider the quotient space
	$$
		\lo := l_0/\sim\,,\quad 		\tilde{l}_{0,m} := l_{0,m}/\sim\,,
		$$
		and define a distance $\tilde{d}:\lo\times\lo\longrightarrow [0,\infty)$ by
		$$
		\tilde{d}(\tilde{\bx},\tilde{\by}) :=
		\inf\{\|\bx'-\by'\|_\infty:\bx'\in\tilde{\bx},\by'\in\tilde{\by}\}=\inf\{\|\tau^k \tau'^l \bx- \by\|_\infty:k,l\in\ZZ\}\,,
		$$
		for all $\tilde{\bx},\tilde{\by}\in \lo$, and all $\bx\in\tilde{\bx},\by\in\tilde{\by}$.  It follows then, cf. \cite{basrak2016invariance}, that
		$\lo$ is a separable and complete
		metric space with respect to $\tilde{d}$.
		Moreover, one can naturally embed any (matrix) space $
		\RR^{d \times d'}$, ${d,d'\geq1}$ into $\lo$ 
		by concatenating zeros around a given array in $\RR^{d \times d'}$. In particular, any finite block of observations $(X_{ij})_{1\leq i \leq \ell, 1\leq j\leq k}$ for $\ell, k \geq 1$ can be considered an element in $\lo$. 

		Due to $m$-dependence, it follows from \cite{basrak:planinic:2020} that  the following quantity is strictly positive 
		\begin{equation}\label{def:theta}
		\theta:=\PP \left(\sup_{(i,j)<  (0,0) } |Y_{ij}| \leq 1 \right)\,,
		\end{equation}
where we apply the lexicographic order on $\ZZ^2$, i.e., $(i,j) < (i',j')$ if either (a) $i<i'$ or (b) $i=i'$ and $j<j'$.
 Denote by 
	  $\bsZ=({Z}_{ij})_{i,j\in \ZZ}$ an array of random variables  distributed as $\bsY$  
	 conditioned on the event $\{\sup_{(i,j)<  (0,0)}|Y_{ij}| \leq 1\}$.
 That is, the law of $\bsZ$ is given by    
 \begin{equation}
 \label{eq:z}
 \mathcal{L}\Big( \{{Z}_{ij}, i,j\in\ZZ\}\Big) = \mathcal{L}\left( \{{Y}_{ij}, i,j\in\ZZ\}\, \Big|\, \sup_{(i,j)<  (0,0) } |Y_{ij}| \leq 1 \right)\,.
 \end{equation}
 By stationarity, $\theta$ in \eqref{def:theta} is the reciprocal of the expected number of $Z_{ij}$'s with modulus greater or equal to 1, see Remark 3.6 in \cite{basrak:planinic:2020}. 
Moreover, the maximum of $(|X_{ij}|)_{1\leq i,j\leq n}$ asymptotically behaves as the maximum of $ \lfloor \theta \cdot n^2 \rfloor$ \iid\  random variables with the same marginal distribution,  such a constant $\theta$ is often called the extremal index in the literature,  cf.~Remark 3.11 in \cite{basrak:planinic:2020}.
 Note that by considering the conditioned array $({Z}_{ij})_{i,j\in \ZZ}$,  one
 	cancels bias towards blocks with a greater number of high level exceedances
 	inherent in the definition  of the tail array $({Y}_{ij})_{i,j\in \ZZ}$.
	Such conditioning also provides  a common reference (or anchoring)  point for the tail field $\bsY$ by letting  $Z_{00}$ be the `left-most' element greater than 1. Clearly,  $\bsZ$  is also a random element of $l_{0,m} \subseteq l_0$. {Hence,  \eqref{eq:z} immediately induces a distribution for  $\bsZ$ on both $\lom\subseteq\lo$ and  $l_{0,m}\subseteq l_0$ in a natural way.} 
	In particular, the random variable
	\begin{equation}\label{def:L_Z}
	L_Z:=\sup_{i,j\in \ZZ} |Z_{ij}|
	\end{equation} 
	is a.s. finite and larger than 1 since $\PP(|Y_{00}| > 1)=1$. Using the regular
	variation property  one can show (see Section 2 of \cite{basrak2016complete})
	that $\P(L_Z>v)= v^{-\alpha}$ for $v\geq 1$.

	We also define a normalized array $\bsQ=({Q}_{ij})_{i,j\in\ZZ}$  in $\tilde l_{0,m}$ as the equivalence class of
	\begin{equation}
	\label{eq:q}
	Q_{ij} := Z_{ij} / L_Z \; , \ i,j\in \ZZ \; .
	\end{equation}
	{Observe that $\bsQ \in \mathbb{S}$, where $$\mathbb{S}:=\{\tilde{\bx}\in\lo:\|\tilde{\bx}\|_\infty=1\}$$ denotes the unit sphere in $\lo$ in the metric induced by the $\|\cdot\|_\infty$ norm.}
	It turns out  that
	$L_Z$ and $\bsQ$ are independent \cite{basrak:planinic:2020}.
	Consider now a block of observations $(X_{ij})_{1\leq i,j \leq r_n}$, conditioned on the event 
	$\{M_{r_n} > a_{n^2} u\}$ where {$r_n\to\ff$} and $$M_{r_n}:=\max_{1\le \max(i,j)\le r_{n}} |X_{ij}|.$$ 
	After conditioning, normalizing, {and quotienting out by $\sim$}, the law of such a block has a limiting distribution equal to the law of $\bsZ$ as long as $\lim_{n\toi}r_n/n=0$.
The following result is a direct consequence of Proposition 3.8 in \cite{basrak:planinic:2020}:
%
\begin{proposition} \label{prop:block:conv}
		\label{lem:conv-cluster}
		Let	$k_n := \lfloor n/ r_n \rfloor$. Under $m$--dependence and regular variation conditions, for every $u >0$,\\
		i)  
		\[
		  k_n^2 \PP \left( M_{r_n} > a_{n^2} u \right) \to \theta u ^{-\alpha}
		  \,,
		\]		
	ii)
		\begin{align*}
		\law \left( (a_{n^2}u) ^{-1} (X_{ij})_{1\leq i \leq r_n, 1\leq j\leq r_n}  \, \Big|\, M_{r_n} > a_{n^2} u\right)  \Rightarrow \law
		\left(\bsZ \right)\,,
		\end{align*}
		as $n \toi$ in $\lo$. Moreover, the array $\bsQ$ and random variable $L_Z$ introduced in  \eqref{def:L_Z} and \eqref{eq:q} are independent.
\end{proposition}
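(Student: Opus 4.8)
The plan is to deduce both assertions directly from Proposition~3.8 of \cite{basrak:planinic:2020}, after matching the normalizing sequences. Write $u_n := a_{n^2}u$ and recall $M_{r_n}=\max_{1\le i,j\le r_n}|X_{ij}|$. Since $a_n=n^{1/\alpha}\ell(n)$ with $\ell$ slowly varying and, by assumption, $r_n/n\to 0$, the defining property \eqref{eq:defan} of $(a_n)$ together with the regular variation \eqref{regvar} give
\begin{equation*}
n^2\,\PP(|X|>u_n)\longrightarrow u^{-\alpha}\qquad\text{and}\qquad r_n^2\,\PP(|X|>u_n)=\frac{r_n^2}{n^2}\cdot n^2\,\PP(|X|>u_n)\longrightarrow 0\,.
\end{equation*}
Hence $(u_n)$ is an admissible level sequence for blocks of side $r_n$ (tending to infinity faster than the block-intrinsic scale $a_{r_n^2}$), and Proposition~3.8 of \cite{basrak:planinic:2020}, applied with this level, yields both
\begin{equation}\label{eq:pf-extr}
\frac{\PP(M_{r_n}>u_n)}{r_n^2\,\PP(|X|>u_n)}\longrightarrow\theta
\end{equation}
and
\begin{equation}\label{eq:pf-clu}
\law\big((u_n)^{-1}(X_{ij})_{1\le i,j\le r_n}\,\big|\,M_{r_n}>u_n\big)\ \Rightarrow\ \law(\bsZ)\quad\text{in }\lo\,,
\end{equation}
the latter because the quotient by $\sim$ that defines $\lo$ is precisely the anchoring at the left-most high exceedance used to construct $\bsZ$ in \eqref{eq:z}; the independence of $\bsQ$ and $L_Z$ is part of the same cited result and was already recorded around \eqref{eq:q}.

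For part~(i) I would combine \eqref{eq:pf-extr} with two elementary facts: first, $k_n r_n=\lfloor n/r_n\rfloor r_n\in(n-r_n,n]$, so $k_n r_n/n\to1$ and hence $k_n^2 r_n^2/n^2\to1$; second, $n^2\,\PP(|X|>u_n)\to u^{-\alpha}$ from the first display. Writing
\begin{equation*}
k_n^2\,\PP(M_{r_n}>u_n)=\frac{k_n^2 r_n^2}{n^2}\cdot\frac{\PP(M_{r_n}>u_n)}{r_n^2\,\PP(|X|>u_n)}\cdot n^2\,\PP(|X|>u_n)
\end{equation*}
and passing to the limit gives $k_n^2\,\PP(M_{r_n}>u_n)\to 1\cdot\theta\cdot u^{-\alpha}=\theta u^{-\alpha}$, which is~(i). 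Part~(ii) is just \eqref{eq:pf-clu}, since the conditioning event $\{M_{r_n}>a_{n^2}u\}$ coincides with $\{M_{r_n}>u_n\}$; the final independence claim is the one already quoted above.

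The one genuinely delicate point — and the step I expect to be the main obstacle — is the passage from Proposition~3.8 of \cite{basrak:planinic:2020} to \eqref{eq:pf-extr}--\eqref{eq:pf-clu} \emph{at the level $u_n=a_{n^2}u$}. That proposition is most naturally formulated with the block-intrinsic scale $a_{r_n^2}$ and a fixed threshold, whereas here $a_{n^2}/a_{r_n^2}\to\infty$ (because $n/r_n\to\infty$ and $\ell$ is slowly varying), so we are probing the block maximum far out in the tail of its distribution. If one has available a version of Proposition~3.8 phrased for arbitrary level sequences $x_n\to\infty$ with $r_n^2\PP(|X|>x_n)\to0$ (the usual form of extremal-index statements), there is nothing further to do. Otherwise the fixed-threshold statement must be uniformized over the range $x\ge a_{r_n^2}v$ with $v$ large: both \eqref{eq:pf-extr} and the cluster-shape convergence \eqref{eq:pf-clu} are then obtained by running the block-decomposition and anchoring (Bonferroni) arguments of \cite{basrak:planinic:2020} uniformly in $x$, with Potter bounds controlling the slowly varying part of $\PP(|X|>\cdot)$ and $m$-dependence capping the cluster size (so that the relevant Bonferroni corrections are over finitely many, boundedly spaced indices). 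Everything else is routine bookkeeping.
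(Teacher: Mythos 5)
Your argument is correct and is essentially the paper's: the result is stated there as a direct consequence of Proposition~3.8 of Basrak--Planini\'c (2020), and your normalization bookkeeping ($k_n r_n/n\to 1$ together with $n^2\,\PP(|X|>a_{n^2}u)\to u^{-\alpha}$) is exactly what makes that citation work. The ``delicate point'' you flag does not arise, since the cited proposition is already formulated for blocks of side $r_n=o(n)$ at the sample-level threshold $a_{n^2}u$ (the standard blocks-method scaling), so no uniformization over levels is needed.
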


 Heuristically, a typical square block of observations which has at least one exceedance  above a large threshold, behaves asymptotically as  the conditioned tail field $\bsZ$ {(viewed as a random element of $\lo$)}. 
Due to  $m$-dependence, one can also show that
		\begin{align*}
		\law \left( (a_{n^2} u) ^{-1} (X_{ij})_{-m\leq i, j\leq m}  \, \Big|\,  \max_{ \max(|i|,|j|)\le m, (i,j)<(0,0)} |X_{ij}|\leq a_{n^2} u\,,  |X_{00}|  > a_{n^2}u\right)  \Rightarrow \law
		\left(\bsZ \right)\,,
		\end{align*} 
	cf.	\cite{basrak:planinic:2020}. 
Let
	\begin{align}\label{eq:blockdef}
	\bsB_{kl} :=
	\{ X_{ij}/a_{n^2}  : {(k-1)r_n+1\leq i \leq  kr_n, (l-1)r_n+1 \leq j \leq lr_n} \}
	\end{align}
and denote by $$\|{\bsB_{kl}}\|_{\max} := \max \{ |X_{ij}/a_{n^2}|  : {(k-1)r_n+1\leq i \leq  kr_n, (l-1)r_n+1 \leq j \leq lr_n} \}.$$
Due to stationarity and $m$--dependence, individual blocks are equally distributed 
\footnote{In fact, Proposition \ref{prop:block:conv} shows that  on $\lo$
for every $u \geq 1$, 
{ 
\begin{align}
\PP \left(  \|{\bsB_{11}}\|_{\max} >  u\,, \bsB_{11}/\|{\bsB_{11}}\|_{\max} \in \cdot \,  \, \Big|\, \|{\bsB_{11}}\|_{\max} > 1 \right)  \Rightarrow 
u^{-\alpha}         \PP\left(\bsQ \in \cdot \right)\,,
\end{align}}
cf. \eqref{def:RV_d}, hence
the individual  blocks
can be considered asymptotically regularly varying, although their distribution clearly changes with $n$.} random elements in $\lo$ and only weakly dependent. 

Now consider the point processes 
$$\PPC_n^B := \sum_{k,l=1}^{k_n} \delta_{((k,l)/k_n,\bsB_{kl})}\,.$$ 
	Let $\mathcal{M}^0_p([0,1]^2\times\lo)$ denote the set of point measures on $[0,1]^2\times\lo$ that are finite outside a neighborhood of the set $[0,1]^2 \times \{\bsO\}$ endowed with the appropriate vague topology, cf. \cite{basrak:planinic:2020}, here $\bsO$ represents the sequence of all 0's. The following is an immediate consequence of Theorem 3.9 in \cite{basrak:planinic:2020}:
	
	\begin{proposition}
		\label{thm:PPconvInLo2}
		Let $\bsX^{\infty}$ be a stationary $m$-dependent regularly varying array with tail index $\alpha$ and $(r_n)$ a sequence such that $r_n/n\to 0 $ and $r_n \toi$. Then $\PPC_n^B \dto
		\PPC$ in $\mathcal{M}^0_p([0,1]^2\times\lo)$ where $N$  is a Poisson  process 
		with the following representation
		\begin{align}
		\PPC = \sum_{i=1}^\infty\delta_{(T_i, P_i\bsQ_{i})} \; , \label{eq:Repr}
		\end{align}
		where
		\begin{enumerate}[(i)]
			\item $\sum_{i=1}^\infty\delta_{(T_i,P_i)}$ is a Poisson point process on {$[0,1]^2\times(0,\infty)$}
			with intensity measure $\theta\cdot Leb \times d(-y^{-\alpha})$ {where $\theta$ is as in \eqref{def:theta} {and $P_1>P_2>\cdots$};}
			\item $({\bsQ}_i)_{i\in\N}$ is a sequence of \iid\ elements in $\mathbb{S}$, independent of
			$\sum_{i=1}^\infty\delta_{(T_i, P_i)}$ and with common distribution equal to the distribution of
			$\bsQ$ in \eqref{eq:q}.
		\end{enumerate}
	\end{proposition}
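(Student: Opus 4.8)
\emph{Proof strategy.} The shortest route is to invoke the general block point-process limit theorem of \cite{basrak:planinic:2020} (their Theorem 3.9), since the field $\bsX^{\infty}$ is stationary, regularly varying, and $m$-dependent (hence $\beta$-mixing); but let me outline the mechanism, as it is precisely this structure that drives the result. First I would fix the topology: $\mathcal{M}^0_p([0,1]^2\times\lo)$ carries the vague ($w^\#$) topology relative to Borel sets bounded away from $[0,1]^2\times\{\bsO\}$, and since $\tilde d(\tilde\bx,\tilde{\bsO})=\inftynorm{\tilde\bx}$ such a ``bounded'' set has its $\lo$-component contained in $\{\inftynorm{\tilde\bx}\ge u\}$ for some $u>0$. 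The limit $\PPC$ in \eqref{eq:Repr} is a.s.\ simple (the $T_i$, and separately the $P_i$, are pairwise distinct), so by Kallenberg's criterion in the form adapted to this topology it suffices to check, for $A=B\times C$ with $B\subseteq[0,1]^2$ a rectangle and $C=\{\tilde\bx\in\lo:\inftynorm{\tilde\bx}\ge u,\ \tilde\bx/\inftynorm{\tilde\bx}\in D\}$ (with $u>0$ and $D$ open in $\SS$, $\PP(\bsQ\in\partial D)=0$) --- such sets generate the relevant Borel $\sigma$-field and form a dissecting $\cap$-stable family --- that $\EE[\PPC_n^B(A)]\to\EE[\PPC(A)]$ and $\PP(\PPC_n^B(A)=0)\to\rme^{-\EE[\PPC(A)]}$.

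The first moment is routine. By stationarity the blocks $\bsB_{kl}$ are identically distributed, so $\EE[\PPC_n^B(B\times C)]=\#\{(k,l):1\le k,l\le k_n,\ (k,l)/k_n\in B\}\cdot\PP(\bsB_{11}\in C)$; the grid $\{(k,l)/k_n\}$ equidistributes on $[0,1]^2$, giving the first factor as $(1+o(1))k_n^2\,\mathrm{Leb}(B)$, while the footnote after \eqref{eq:blockdef} together with Proposition~\ref{prop:block:conv}(i) provides the asymptotic regular variation of a single block, which a polar-decomposition argument turns into $k_n^2\PP(\bsB_{11}\in C)\to\theta\int_0^\infty\PP(y\bsQ\in C)\,\alpha y^{-\alpha-1}\,\rmd y$. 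Multiplying the two limits reproduces $\EE[\PPC(B\times C)]$ for the process in \eqref{eq:Repr}, since $\inftynorm{P_i\bsQ_i}=P_i$ and $(P_i)$ are the points of a Poisson process on $(0,\infty)$ with mean measure $\theta\,\rmd(-y^{-\alpha})$, independent of the i.i.d.\ marks $\bsQ_i$.

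The void probability is where $m$-dependence is used decisively and where I expect the main work. Once $r_n>m$, two blocks whose index vectors differ by at least $2$ in some coordinate occupy index sets separated by more than $m$ and are therefore \emph{exactly independent}; so $\{\bsB_{kl}\in C\}$ is a rare event ($\PP(\bsB_{11}\in C)\sim\theta u^{-\alpha}\PP(\bsQ\in D)\,k_n^{-2}$) depending only on the at most eight ``king-move'' neighbouring blocks. A Chen--Stein/Poisson limit bound then reduces matters to showing $\sum_{(k,l)\sim(k',l')}\PP(\bsB_{kl}\in C,\ \bsB_{k'l'}\in C)\to0$ over distinct adjacent pairs. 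For each such pair I would split $\{\|\bsB_{kl}\|_{\max}>u\}$ into (a) the event that some \emph{interior} entry of $\bsB_{kl}$ (at distance $>m$ from every side of the block) exceeds $u$, which is independent of $\{\bsB_{k'l'}\in C\}$ and contributes $O(\PP(\|\bsB_{11}\|_{\max}>u)^2)=O(k_n^{-4})$; and (b) the event that some entry in the width-$m$ boundary strip of $\bsB_{kl}$ exceeds $u$, of probability $O(mr_n/n^2)=O(m\,k_n^{-2}r_n^{-1})$ since the strip has only $O(mr_n)$ entries. Summing over the $O(k_n^2)$ adjacent pairs yields $O(k_n^{-2}+m\,r_n^{-1})\to0$ by $r_n\toi$, whence $\PP(\PPC_n^B(B\times C)=0)\to\rme^{-\EE[\PPC(B\times C)]}$.

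Combining the two limits over the generating class and invoking Kallenberg's criterion gives $\PPC_n^B\dto\PPC$ with $\PPC$ of the stated form: a Poisson process with uniform locations on $[0,1]^2$, Pareto$(\alpha)$ magnitudes $P_1>P_2>\cdots$, and independent $\bsQ$-distributed shape marks. The only genuine obstacle is the void-probability step --- pushing the unavoidable dependence between adjacent blocks down to $o(k_n^{-2})$, which is exactly where one needs $r_n\toi$ so that the width-$m$ boundary strips become negligible relative to the $r_n\times r_n$ blocks; all remaining ingredients, in particular the single-block regular variation and the independence of $L_Z$ and $\bsQ$, are already furnished by Proposition~\ref{prop:block:conv}.
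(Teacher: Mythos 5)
Your proposal matches the paper: Proposition~\ref{thm:PPconvInLo2} is stated there as an immediate consequence of Theorem 3.9 of \cite{basrak:planinic:2020}, which is exactly the route you take first. Your supplementary sketch of the mechanism (first moments plus a Chen--Stein void-probability argument, with the width-$m$ boundary strips of adjacent blocks contributing $O(m r_n/n^2)$ per pair) is sound and is essentially the same computation the paper carries out later in Lemma~\ref{lem:En}.
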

	
\begin{remark}
	The infinite $m$--dependent array 
		$
		(X_{ij})_{i, j}\
		$ 
		does not have to be restricted to the square $(i,j)/n \in [0,1]^2$.
		 {As it is clear from the proof, the theorem above also holds in an arbitrary rectangle $(i,j)/n \in [0,a]\times[0,b],\ a, b>0$.
		Therefore,} the result extends to the
		 point processes 	$ \sum_{k,l\ge 1} \delta_{((k,l)/k_n,\bsB_{kl})}$ and convergence in the space of point measures in $\mathcal{M}^0_p([0,\infty)^2\times\lo)$ {in an appropriate vague topology. Observe simply that the vague convergence of measures in $\mathcal{M}^0_p$ corresponds to the convergence of
		 integrals  $\int f d \mu_n \to \int  f d \mu$ for all bounded, continuous functions with a restricted (or bounded) support, see \cite{basrak:planinic2018note}. If we restrict the support of such functions on $[0,\infty)^2\times\lo$ to  the sets of the form $[0,a]^2 \times \lo$
		 excluding some neighborhood of  $[0,a]^2 \times \{\bsO\}$, over  $a>0$, then this extension becomes immediate.}
		 
\end{remark}	
	
	\section{Results for $m$-dependent random matrices}
	\subsection{Extreme eigenvalues of heavy-tailed $m$-dependent Wigner matrices}\label{sec:refl}

	We will now impose the condition that $\bsX^{\infty}$ is $m$-dependent (see the definition in Section \ref{sec:random fields}).
Recall from \eqref{eq:defwigner} that $\hA= (\h A_{ij}) =(\h X_{ij}/a_{n^2})$. Thus $\hA$ is an $m$-dependent {\bf heavy-tailed Wigner matrix}.

	For the points $(P_i,\bsQ_i)_i$ of \Cref{thm:PPconvInLo2}, 
denote by
	\begin{align}
	\label{eq:sigma^i}
	\sigma_{(i,1)} \geq \sigma_{(i,2)} \geq \sigma_{(i,3)} \geq \dots 
	\end{align}
	the ordered singular values 
	 of $\bsQ_i$ (the $\bsQ_i$ exist and are well-defined by Proposition \ref{prop:block:conv}).
	They are random, but also independent of the points $(P_i)_{i\in\N}$, which form a Poisson point process on {$(0,\infty)$} with intensity measure $d(-\theta y^{-\alpha})$ {such that $P_1>P_2>\cdots$}.

	Our main result characterizes the joint limit of the point processes $N_n^{\pm}$ of eigenvalues of $\hA$, where
\begin{equation}\label{def:Npm}
N_n^+ := \sum_{i=1}^n \delta_{\lambda_i(\hA)} \indicator_{\{\lambda_i(\hA)>0\}}\quad \text{and} \quad 
N_n^- := \sum_{i=1}^n \delta_{\lambda_i(\hA)} \indicator_{\{\lambda_i(\hA)<0\}}\,.
\end{equation}	
	\begin{theorem}
		\label{tm:ReflMat}
Let $\bsX^{\infty}$ be a stationary $m$-dependent regularly varying array with tail index $\alpha\in (0,4)$ and consider the Wigner matrix $\hA$ defined in \eqref{eq:defwigner}.
		 If $2\le \alpha<4$ assume {in addition} that $\EE X_{11}= 0$.
Then we have the joint convergence
\begin{equation}\label{eq:ddeede}
(N_n^+,N_n^-) \cid \Big(\sum_{i=1}^{\infty} \sum_{j=1}^{m+1}  \delta_{P_i \sigma_{(i,j)}}\,, \, \sum_{i=1}^{\infty} \sum_{j=1}^{m+1}  \delta_{-P_i \sigma_{(i,j)}} \Big)\,, \qquad \nto\,,
\end{equation}
where $P_i$ and $\sigma_{(i,j)}$ are as in \eqref{eq:sigma^i}.
The weak convergence of the point processes holds in the space of 
point measures on $(0,\infty)$ and $ (-\infty,0)$ respectively equipped with the vague topology.
	\end{theorem}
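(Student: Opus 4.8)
The plan is to transfer the block--level Poisson convergence of \Cref{thm:PPconvInLo2} to the spectrum of $\hA$ through a truncation plus eigenvalue--perturbation argument in the spirit of \cite{auffinger:arous:peche:2009}, with the bookkeeping of large \emph{entries} replaced by that of large \emph{blocks} (clusters). We fix an auxiliary scale $r_n\to\infty$ with $r_n/n\to 0$, set $k_n=\lfloor n/r_n\rfloor$, and cut $\hA$ into the $k_n\times k_n$ array of $r_n\times r_n$ blocks. Since $\hA$ is the symmetrization of $\bsX^\infty$, its $(k,l)$--block equals $\bsB_{kl}$ from \eqref{eq:blockdef} when $k<l$, equals $\bsB_{lk}'$ when $k>l$, and is the symmetrization of $\bsB_{kk}$ on the diagonal; in particular the heavy blocks of $\hA$ occur in mirror pairs, one pair per heavy upper--triangular block of $\bsX^\infty$ (heavy diagonal blocks being asymptotically absent). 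By \Cref{thm:PPconvInLo2} (in the rectangular form of the Remark) the block process $\PPC_n^B$ converges to $\PPC=\sum_i\delta_{(T_i,P_i\bsQ_i)}$, and by \Cref{prop:block:conv}(i) the number of heavy blocks $\{(k,l):\|\bsB_{kl}\|_{\max}>\varepsilon\}$ is asymptotically Poisson with finite mean, hence $O_\P(1)$ for each fixed $\varepsilon>0$.

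\textbf{The truncation estimate (main obstacle).} We split $\hA=\hA^{(\varepsilon)}+\hA^{(\le\varepsilon)}$, where $\hA^{(\varepsilon)}$ keeps the heavy blocks (in full) and $\hA^{(\le\varepsilon)}$ keeps the remaining blocks, so that every entry of $\hA^{(\le\varepsilon)}$ is at most $\varepsilon$ in modulus. The pivotal --- and technically most demanding --- step is to prove, for each fixed $\varepsilon>0$,
\[
\|\hA^{(\le\varepsilon)}\|\;\Pto\;0,\qquad \nto.
\]
Heuristically this rests on an operator--norm bound of the form $\E\|\hA^{(\le\varepsilon)}\|^2\le C_m\,n\,\E\big[(X/a_{n^2})^2\indicator_{\{|X|\le\varepsilon a_{n^2}\}}\big]$: for $\alpha\in(0,2)$ Karamata's theorem makes the right--hand side of order $\varepsilon^{2-\alpha}/n\to0$, while for $\alpha\in[2,4)$ one first uses the centering $\E X=0$ to subtract the otherwise divergent mean term $\E[X]\,\mathbf 1\mathbf 1'/a_{n^2}$, after which it is bounded by $\var(X)\,n^{1-4/\alpha}\to0$. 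Converting such a moment bound into an operator--norm statement is classical for i.i.d.\ Wigner matrices (trace method or an $\epsilon$-net); under $m$-dependence a single moderately large entry correlates an entire $(m+1)\times(m+1)$ neighbourhood, so the combinatorial expansions acquire many extra non--vanishing terms that have to be controlled uniformly in $n$. This is the heart of \Cref{sec:proofwigner}.

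\textbf{Decoupling and the local spectrum.} On an event of probability tending to $1$ the finitely many heavy blocks lie in pairwise distinct block--rows and block--columns and off the diagonal of the block array --- because $\PPC_n^B$ converges while the limiting points $T_i$ a.s.\ have distinct coordinates and a.s.\ avoid $\{s=t\}$, and there are $k_n\to\infty$ block--rows --- and each one, after rescaling by $\|\bsB_{kl}\|_{\max}$ and passing to the quotient by $\sim$, is a single cluster supported in an $(m+1)\times(m+1)$ window, the straddling of block boundaries being negligible for $r_n\to\infty$ (exactly the content of \Cref{thm:PPconvInLo2}). Hence $\hA^{(\varepsilon)}$ is, up to a permutation similarity and an error of vanishing operator norm, block--diagonal with one summand $\left(\begin{smallmatrix}0&B_i\\ B_i'&0\end{smallmatrix}\right)$ per heavy upper--triangular block $B_i$ together with its mirror image; the eigenvalues of this summand are $\pm\sigma_1(B_i)\ge\pm\sigma_2(B_i)\ge\cdots$. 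By \Cref{prop:block:conv}(ii) and the continuity of singular values on $\lom$, the pairs $\big(\|\bsB_{kl}\|_{\max},\,B_i/\|\bsB_{kl}\|_{\max}\big)$ converge jointly --- over all heavy blocks, in the order of $\PPC$ --- to $(P_i,\bsQ_i)$; since $\bsQ_i$ has rank at most $m+1$ this yields $\sigma_{(i,j)}(B_i)\to P_i\sigma_{(i,j)}$ for $j\le m+1$ and $\to0$ for $j\ge m+2$.

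\textbf{Passage to the limit.} Weyl's eigenvalue--perturbation inequality bounds the displacement of each eigenvalue between $\hA$ and $\hA^{(\varepsilon)}$ by $\|\hA^{(\le\varepsilon)}\|\Pto0$. Consequently, for every $\eta>0$ that a.s.\ carries no atom of the limit, the restriction of $(N_n^+,N_n^-)$ to $\{|x|>\eta\}$ converges in distribution --- letting first $\nto$, then $\varepsilon\to0$ --- to the restriction to $\{|x|>\eta\}$ of the pair on the right of \eqref{eq:ddeede}, the identification of the index set with $\PPC$ of \Cref{thm:PPconvInLo2} following from stationarity and the symmetrization. As that limiting process is a.s.\ locally finite on $(0,\infty)$, resp.\ $(-\infty,0)$, no mass escapes to the origin, so letting $\eta\to0$ upgrades this to the asserted vague convergence on $(0,\infty)$ and $(-\infty,0)$. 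The joint convergence of $(N_n^+,N_n^-)$ is then automatic, as both coordinates are continuous images of the \emph{same} family of heavy blocks under $B\mapsto\big(\{\sigma_j(B)\}_j,\{-\sigma_j(B)\}_j\big)$.
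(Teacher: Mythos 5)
Your overall architecture matches the paper's: cut $\hA$ into $r_n\times r_n$ blocks, keep the heavy blocks, control the rest by an operator-norm bound plus Weyl's inequality, and read off the limit from \Cref{thm:PPconvInLo2} together with the continuity of singular values on $\lom$. But the pivotal estimate is stated in a form that is false, and the part of the proof that the paper itself identifies as the technical core is not supplied. Concretely, $\|\hA^{(\le\vep)}\|\Pto 0$ cannot hold for fixed $\vep>0$: since $|H_{ij}|=|e_i'He_j|\le\|H\|$ for any symmetric $H$, one has $\|\hA^{(\le\vep)}\|\ge\max_{i,j}|\h A^{<\vep}_{ij}|\to\vep$ in probability, so the light part has operator norm bounded below by $\vep(1-o(1))$. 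The same lower bound kills your heuristic $\E\|\hA^{(\le\vep)}\|^2\le C_m\,n\,\E[(X/a_{n^2})^2\1{|X|\le\vep a_{n^2}}]$, whose right-hand side tends to $0$. What is true, and what the paper proves for $\alpha\in(0,2)$ by the elementary Frobenius bound plus Karamata, is only the double-limit statement $\lim_{\vep\to0}\limsup_n\P(\|\hA^{<\vep}\|>\delta)=0$; your final paragraph does take the limits in that order, but the key lemma as you state it is not the one you can prove or use.

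For $2\le\alpha<4$ the gap is more serious: there the Frobenius bound diverges ($n^2\E[(X/a_{n^2})^2\1{|X|\le\vep a_{n^2}}]\sim n^{2-4/\alpha+o(1)}\toi$ for $\alpha>2$), and the trace method applied at a fixed truncation level $\vep a_{n^2}$ also fails because the truncated higher moments are too large. The paper is forced to (i) truncate at the $n$-dependent level $\vep_n=n^\beta/b_n\to0$ with $\beta$ in the window \eqref{eq:betabounds}, (ii) run the moment method for $\hA^{<\vep_n}$ with the $m$-dependence handled by collapsing paths via the map $\psi$ and the counting bound $|\psi^{-1}(\mathcal P')|\le(m+1)^{2s_n}(2s_n)!$ (Lemmas \ref{lem:newpathinq}--\ref{lem:count}, Proposition \ref{lem:aboundofallevenpaths}), and (iii) introduce a second level $\tvep_n$ and the structural Lemmas \ref{lem:2blocks}--\ref{lem:A(2)norm} to show that the surviving intermediate blocks are few, consecutive-block events are rare, their nonzero entries fit in a $(2m+1)\times(2m+1)$ window, and $\|\hA^{(2)}\|\Pto0$. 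Your proposal replaces all of this with the remark that it ``is the heart of Section \ref{sec:proofwigner}'', which is precisely the content that a proof must provide. A smaller repairable point: keeping heavy blocks ``in full'' does not place them in a fixed finite-dimensional space (an $r_n\times r_n$ block with all entries of size $\vep$ has operator norm $r_n\vep\toi$), so the singular-value continuity argument requires removing the sub-threshold entries inside heavy blocks as well, as the paper does via the entrywise truncation $\hB_{kl}^{>\vep}$ and Lemma \ref{lem:2m+1entriesintheblock}.
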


If $\bsX^{\infty}$ is an i.i.d. field, we have that $\theta = 1$, $|Q_{00}| =1$ and $Q_{ij} =0$ for $(i,j)\not = (0,0)$ (see \eqref{eq: 0fdd}). Thus we have $\sigma_{(i,1)}=1$ and $\sigma_{(i,j)}=0$ for all $j>1$. By Theorem \ref{tm:ReflMat}, we have $N_n^+ \cid \sum_{i=1}^{\infty} \delta_{P_i}$, that is, we obtain Theorem 1 in \cite{auffinger:arous:peche:2009} as a special case.

The weak convergence of the point processes of the eigenvalues of $\hA$ 
in Theorem~\ref{tm:ReflMat} allows one to use the conventional tools in this
field; see \cite{resnick:2007,resnick:1987}. In case $\bsX^{\infty}$ is an i.i.d. field, an immediate consequence is
\beao
\Big(\max_{i=1, \ldots,n} \lambda_i(\hA), \min_{i=1, \ldots,n} \lambda_i(\hA) \Big) \cid (P_1,-P_1)\,.
\eeao
 More generally, we obtain the following result about the finite dimensional distributions.
\begin{corollary}\label{cor:1}
Let $K\in\N$ and assume the conditions of Theorem \ref{tm:ReflMat}.
Then the $K$ largest eigenvalues of the heavy-tailed Wigner matrix $\hA$ converge in distribution to the $K$ largest points in a point process of the form	
		\begin{align*}
		\sum_{i=1}^\infty \dsum_{j=1}^{m+1} \delta_{P_i \sigma_{(i,j)}}\,.
		\end{align*}
\end{corollary}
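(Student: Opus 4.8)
The plan is to deduce Corollary \ref{cor:1} from Theorem \ref{tm:ReflMat} by the standard argument that $\mathcal{M}_p$-convergence of point processes forces convergence of the order statistics. Write $N^+ := \sum_{i=1}^{\infty}\sum_{j=1}^{m+1}\delta_{P_i\sigma_{(i,j)}}$ for the limit in \eqref{eq:ddeede}. First I would record three properties of $N^+$ that hold almost surely: (a) $N^+$ has infinitely many points; (b) $N^+$ is a Radon point measure on $(0,\infty)$, i.e.\ $N^+((\vep,\infty))<\infty$ for every $\vep>0$; and (c) for each fixed $x>0$, $\P(N^+(\{x\})>0)=0$. Property (c) is immediate: conditionally on the i.i.d.\ sequence $(\bsQ_i)$ each $P_i$ has a continuous law, so $\P(P_i\sigma_{(i,j)}=x)=0$ for every pair with $\sigma_{(i,j)}>0$, and there are only countably many such pairs. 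Properties (a) and (b) rest on the elementary two-sided bound $1\le\sigma_{(i,1)}\le m+1$: since $\bsQ_i\in\SS$, any matrix representative of $\bsQ_i$ has largest-modulus entry equal to $\|\bsQ_i\|_\infty=1$ and the operator norm dominates every entry, so $\sigma_{(i,1)}\ge 1$; on the other hand the definition of $l_{0,m}$ confines all nonzero entries of $\bsQ_i$ to a box of side at most $m$, so $\bsQ_i$ has at most $(m+1)^2$ nonzero entries, each of modulus $\le 1$, whence $\sigma_{(i,1)}\le\|\bsQ_i\|_F\le m+1$. Consequently each cluster $\{P_i\sigma_{(i,j)}\}_{j\le m+1}$ contributes at least one and at most $m+1$ points of $N^+$, all lying in $[P_i,(m+1)P_i]$; since $(P_i)$ are the points of a Poisson process on $(0,\infty)$ with intensity $\theta\alpha y^{-\alpha-1}\,dy$ — infinitely many, accumulating only at $0$, finitely many above any level — both (a) and the estimate $N^+((\vep,\infty))\le(m+1)\,\#\{i:P_i>\vep/(m+1)\}<\infty$ follow.

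Granting (a)--(c), let $Y_1\ge Y_2\ge\cdots>0$ be the points of $N^+$ listed with multiplicity; they are a.s.\ well defined, and the ``$K$ largest points'' in the statement are $(Y_1,\dots,Y_K)$. The main step is then the continuous mapping theorem. For every $x>0$ one has the identity $\{\lambda_K(\hA)>x\}=\{N_n^+((x,\infty))\ge K\}$ (valid for $x>0$ irrespective of the number of positive eigenvalues of $\hA$); since $x$ is a.s.\ not an atom of $N^+$, Theorem \ref{tm:ReflMat} yields $N_n^+((x,\infty))\cid N^+((x,\infty))$ as $\N$-valued random variables, hence $\P(\lambda_K(\hA)>x)\to\P(N^+((x,\infty))\ge K)=\P(Y_K>x)$; as this holds for all $x>0$, $\lambda_K(\hA)\cid Y_K$. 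For the joint statement I would fix levels $x_1,\dots,x_K>0$, put $x:=\min_j x_j$, and observe that $\bigcap_{j\le K}\{\lambda_j(\hA)\le x_j\}=\bigcap_{j\le K}\{N_n^+((x_j,\infty))\le j-1\}$ is a fixed $\{0,1\}$-valued functional of the restriction of $N_n^+$ to $(x,\infty)$ that is continuous at every Radon point measure charging none of $x_1,\dots,x_K$. By Theorem \ref{tm:ReflMat} together with (b), (c) and the continuous mapping theorem, $\P\big(\bigcap_{j\le K}\{\lambda_j(\hA)\le x_j\}\big)\to\P\big(\bigcap_{j\le K}\{Y_j\le x_j\}\big)$ at every continuity point $(x_1,\dots,x_K)$ of the law of $(Y_1,\dots,Y_K)$; since these points are dense, this gives $(\lambda_1(\hA),\dots,\lambda_K(\hA))\cid(Y_1,\dots,Y_K)$, which is the claim.

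I do not expect a genuine obstacle beyond Theorem \ref{tm:ReflMat} itself: everything reduces to the soft fact that $\mathcal{M}_p$-convergence of point processes to a limit with a.s.\ infinitely many points and no fixed atoms entails joint convergence of all order statistics (see \cite{resnick:2007,resnick:1987}), so in principle one could even cite this directly. The only items demanding verification are (a)--(c), and the sole piece of real content among them is the bound $1\le\sigma_{(i,1)}\le m+1$, which records that each cluster of the limiting Poisson cluster process consists of between $1$ and $m+1$ points comparable in size to its governing Poisson point $P_i$.
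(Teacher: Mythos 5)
Your proposal is correct and matches the paper's route: the paper treats Corollary \ref{cor:1} as an immediate consequence of the point process convergence in Theorem \ref{tm:ReflMat} via the standard order-statistics argument from \cite{resnick:2007,resnick:1987}, which is exactly what you spell out (including the only substantive check, the a.s.\ bound $1\le\sigma_{(i,1)}\le m+1$ guaranteeing that the limit is Radon with infinitely many points and no fixed atoms). No gaps.
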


Since the eigenvalues of $\hA$ are real, the set of singular values $\{\sigma_i(\hA)\}$ of $\hA$ coincides with $\{|\lambda_i(\hA)|\}$. Therefore we get the following corollary of Theorem \ref{tm:ReflMat}. 
	\begin{corollary}
Under the assumptions of Theorem \ref{tm:ReflMat}, we have
		\begin{align*}
		\sum_{i=1}^n \delta_{\sigma_i(\hA)} \cid \sum_{i=1}^{\infty} \sum_{j=1}^{m+1}  {2}\delta_{P_i \sigma_{(i,j)}}
		\end{align*}
		as $n \toi$, where $P_i$ and $\sigma_{(i,j)}$ are as in \eqref{eq:sigma^i}.
\end{corollary}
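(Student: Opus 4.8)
The plan is to read off this corollary from the \emph{joint} convergence in Theorem~\ref{tm:ReflMat} by an application of the continuous mapping theorem, using only the elementary fact that the singular values of a Hermitian matrix coincide with the absolute values of its eigenvalues.

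\textbf{Step 1: a deterministic identity for point measures.} First I would note that, since $\hA$ is Hermitian, $\{\sigma_i(\hA)\}_{i=1}^n = \{|\lambda_i(\hA)|\}_{i=1}^n$ as multisets. Splitting the eigenvalues according to their sign, the eigenvalues equal to $0$ contribute only an atom at the origin, which is invisible for the vague topology on $(0,\infty)$; the positive eigenvalues contribute $N_n^+$; and each negative eigenvalue $\lambda_i(\hA)<0$ contributes $\delta_{-\lambda_i(\hA)}$. Writing $R\colon (-\infty,0)\to(0,\infty)$ for the reflection $R(x)=-x$ and $R_\ast$ for the induced pushforward on point measures, this gives the exact identity, as point measures on $(0,\infty)$,
\[
\sum_{i=1}^n \delta_{\sigma_i(\hA)}\,\big|_{(0,\infty)} \;=\; N_n^+ + R_\ast N_n^-\,,
\]
where any accidental coincidence $\lambda_i(\hA)=-\lambda_j(\hA)$ merely produces an atom of multiplicity two and is harmless.

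\textbf{Step 2: pass to the limit and simplify.} Next I would invoke the continuity, for the relevant vague topologies, of the map $\Phi(\mu,\nu):=\mu+R_\ast\nu$ sending a point measure on $(0,\infty)$ and a point measure on $(-\infty,0)$ to a point measure on $(0,\infty)$: indeed $R$ is a homeomorphism, so $R_\ast$ is vaguely continuous, and superposition is vaguely continuous since $\int f\,\mathrm{d}(\mu+\nu')=\int f\,\mathrm{d}\mu+\int f\,\mathrm{d}\nu'$ for all bounded continuous $f$ with compact support in $(0,\infty)$. Theorem~\ref{tm:ReflMat} furnishes the joint convergence
\[
(N_n^+, N_n^-)\ \cid\ \Big(\sum_{i=1}^{\infty}\sum_{j=1}^{m+1}\delta_{P_i\sigma_{(i,j)}},\ \sum_{i=1}^{\infty}\sum_{j=1}^{m+1}\delta_{-P_i\sigma_{(i,j)}}\Big)\,,
\]
so the continuous mapping theorem applied to $\Phi$, together with Step 1, yields that $\sum_{i=1}^n \delta_{\sigma_i(\hA)}$ converges in distribution to $\sum_i\sum_{j=1}^{m+1}\delta_{P_i\sigma_{(i,j)}}+R_\ast\big(\sum_i\sum_{j=1}^{m+1}\delta_{-P_i\sigma_{(i,j)}}\big)$. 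Since both coordinates of the limit are built from the \emph{same} points $P_i$ and singular values $\sigma_{(i,j)}$, we have $R_\ast\big(\sum_i\sum_j \delta_{-P_i\sigma_{(i,j)}}\big)=\sum_i\sum_j\delta_{P_i\sigma_{(i,j)}}$, and the limit collapses to $2\sum_{i=1}^\infty\sum_{j=1}^{m+1}\delta_{P_i\sigma_{(i,j)}}$, which is the asserted limit.

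\textbf{Main obstacle.} There is essentially no genuine obstacle here; the argument is pure bookkeeping once Theorem~\ref{tm:ReflMat} is in hand. The only points requiring (routine) care are: (i) that the reality of the spectrum of $\hA$ is what makes the singular-value process the superposition of $N_n^+$ and the reflected $N_n^-$; (ii) that the spurious atom at the origin arising from kernel eigenvalues does not interfere with vague convergence on the open half-line $(0,\infty)$; and (iii) that the convergence in Theorem~\ref{tm:ReflMat} is genuinely joint, so that the continuous mapping theorem may legitimately be applied to the pair $(N_n^+,N_n^-)$ rather than to the marginals separately.
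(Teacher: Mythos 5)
Your argument is correct and is essentially the paper's own (the paper justifies this corollary in one line by noting that $\{\sigma_i(\hA)\}=\{|\lambda_i(\hA)|\}$ for the real-spectrum matrix $\hA$ and then invoking the joint convergence of $(N_n^+,N_n^-)$ from Theorem \ref{tm:ReflMat}). Your Steps 1 and 2 simply spell out that bookkeeping — superposition of $N_n^+$ with the reflected $N_n^-$, continuity of this operation in the vague topology on $(0,\infty)$, and the collapse of the two identical limit components into the factor $2$ — so there is nothing to add.
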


\begin{remark}\label{rem:111}
Using Eq. \eqref{eq:eigenvec} in \Cref{lem:factor2} it is also possible to describe the eigenvectors of $\hA$ in terms of the eigenvectors of the matrices $\bsQ_i' \bsQ_i$. It turns out that the eigenvectors associated with the $k$th-largest or smallest eigenvalues of $\hA$ are localized. This property was already observed for the i.i.d.\ case in \cite[Theorem 1.1]{benaych:peche:2014}. 
\end{remark}

\subsubsection{Elements of the proof of Theorem \ref{tm:ReflMat}}

	The proof relies on a classical result about perturbations of the spectrum, which states that for
	two Hermitian $n\times n$ matrices $\bsH$ and $\bsE$, the ordered eigenvalues $(\lambda_{j})_{1\le j\le n}$ of the matrices $ \bsH$ and  $\bsH+\bsE$ satisfy 
	Weyl's inequality
	\begin{align}\label{eq:weyl}
	\max_{j=1,\ldots,n} | \lambda_{j}  (\bsH)   -  \lambda_{j}  (\bsH+\bsE )   |
	\leq \| \bsE \|\,,
	\end{align}
	where $\norm{\cdot}$ denotes the spectral norm. It is well known that this norm is further bounded by the Frobenius norm, i.e.,
	\begin{align}\label{eq:frobenius}
	\| \bsE \|^2
	\leq \| \bsE \|_F^2 :=  \dsum_{i,j} E_{ij}^2 \,.
	\end{align}

As in \cite{auffinger:arous:peche:2009}, our strategy is to truncate the matrix entries by removing all small-enough entries, and then to use Weyl's inequality to show that our truncation is insignificant in the scaling limit. The main difference is that we do this in a block matrix setting, and so in particular, we actually remove all {\it blocks} that are small enough in the normed space $l_0$.  We then use the results of {Section \ref{sec:random fields}} to show that the eigenvalues formed from only the significant blocks converge to a Poisson cluster process. 
	
	We now describe the block matrices under consideration. Afterwards, we will describe our truncation and show that it does not affect the limiting eigenvalues, i.e., that eigenvalues associated to separate blocks have only `weak interactions'.
	
	As in \eqref{eq:blockdef} above, we can group  the  entries of $\hA$ into blocks of size $r_n\times r_n$ and 
	set again,
	$k_n = \lfloor n/ r_n \rfloor$.  {In fact, it will be evident from the proof that there is no loss of generality by letting $n$ be such that $k_n = n/ r_n$, which we will henceforth assume.}  The $kl$ block is denoted
	\[
	\hB_{kl} =\hB_{n,kl} :=
	\big( {\hat A}_{ij}  : i\in ((k-1)r_n, kr_n], j\in((l-1)r_n, lr_n] \big)\,,
	\] 
	and the array of blocks $(\hB_{kl})$ form the block matrix 
	\begin{align} \label{eq:block form of reflected matrix}
	\hA=  
	\begin{pmatrix}
	\hB_{11} & \hB_{12}  & \cdots & \hB_{1k_n}\\
	\vdots & \vdots & \ddots & \vdots\\
	\hB_{k_n 1}  &   \hB_{k_n 2}      &\ldots & \hB_{k_nk_n}
	\end{pmatrix}.
	\end{align}
	In particular, $\hB_{kl}$ (which is the $kl$-entry of the $k_n\times k_n$ block form of the matrix $\hA$) is itself an $r_n\times r_n$ matrix, which is indicated by the boldface type, however, only the diagonal blocks $\hB_{kk}$ are generally Hermitian. 
	
		{ We want to utilize \Cref{thm:PPconvInLo2}.   For the square $[0,1]^2$ in the $xy$-plane in \Cref{thm:PPconvInLo2},  we reverse the orientation of the $y$-axis (to go downward) in order to match with the natural numbering of rows in a matrix.
	Now let the first coordinate of the ordered pair $(\h T_i,P_i\bsQ_i)$  be a point in the triangle lying below the line $y=x$ inside $[0,1]^2$ (under the reversed orientation).}  
	This triangle corresponds to a rescaled limit of positions $(k,l)$ with $k<l$, as $n\to\infty$, in the  upper triangle of a sequence of square matrices:
	\begin{align}\label{eq:conv to ppp}
	 \sum_{k=1}^{k_n}\sum_{l=k+1}^{k_n} \delta_{((k,l)/k_n,\hB_{kl})}  
	\dto
	 \sum_{i=1}^\infty\delta_{(\h T_i, P_i\bsQ_{i})} \; , \quad \nto\,,
	\end{align}
{	in the space of point measures on the state space $\Delta \times\lo$, where
	$\Delta=  \{(x,y): x<y\}$}.
	One can easily see that the contribution of the diagonal blocks (which have a different distribution than off-diagonal blocks since they are Hermitian)
	is asymptotically negligible.

\subsection{Extreme eigenvalues of regularly varying sample covariance matrices}\label{sec:cov}

In this section we consider the spectrum of high-dimensional heavy-tailed $m$-dependent sample covariance matrices constructed from a stationary $m$-dependent field $\bsX^{\infty}$. We start by recalling their definition in \eqref{eq:defdata}. For a sequence of integers $p=p_n$ such that $p_n/n \to \gamma \in (0,\infty) $ we consider the $m$-dependent matrix 
\begin{equation*}
\bsA = \bsA_n  = (X_{ij}/a_{np})_{1\leq i \leq p, 1\leq j\leq n},
\end{equation*}	
and study the spectrum of the Hermitian  
$p\times p$ \textbf{ sample covariance matrix} 
$
\bsA \bsA'\,.
$

\begin{theorem}\label{tm:CovMat}
Let $\bsX^{\infty}$ be a stationary $m$-dependent regularly varying array with tail index $\alpha\in (0,4)$ and consider the data matrix $\bsA$ defined in \eqref{eq:defdata}.
		 If $2\le \alpha<4$ assume {in addition} that $\EE X_{11}= 0$.
Then we have the point process convergence
\begin{equation}\label{eq:ddeede1}
\sum_{i=1}^p \delta_{\sigma_i(\bsA)} \cid  \sum_{i=1}^{\infty} \sum_{j=1}^{m+1}  \delta_{P_i \sigma_{(i,j)}} \,, \qquad \nto\,,
\end{equation}
where $P_i$ and $\sigma_{(i,j)}$ are as in \eqref{eq:sigma^i}.
The weak convergence of the point processes holds in the space of 
point measures with state space $(0,\infty)$ equipped with the vague topology.
	\end{theorem}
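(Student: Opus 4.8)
The plan is to mirror the proof of Theorem \ref{tm:ReflMat} almost verbatim, replacing the symmetric block decomposition by the rectangular one appropriate to the data matrix $\bsA$, and then using the singular value analogue of Weyl's perturbation inequality. First I would group the entries of $\bsA = (X_{ij}/a_{np})_{1\le i\le p,\,1\le j\le n}$ into $r_n\times r_n$ blocks $\bsB_{kl}$ (with $p/r_n \times n/r_n$ blocks, using $p/n\to\gamma$ and choosing $r_n$ with $r_n\to\infty$, $r_n/n\to 0$ exactly as in Section \ref{sec:random fields}), and apply the rectangular version of Proposition \ref{thm:PPconvInLo2}, valid by the Remark following that proposition, to obtain
\begin{equation*}
\sum_{k,l} \delta_{((k,l)/k_n,\bsB_{kl})} \dto \sum_{i=1}^\infty \delta_{(T_i,P_i\bsQ_i)}
\end{equation*}
in $\mathcal M_p^0([0,\gamma]\times[0,1]\times\lo)$, where here all blocks are genuinely i.i.d.-type (non-Hermitian), so there is no diagonal-block issue to discard.

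Next I would set up the truncation. Fix a level $\varepsilon>0$ and remove every block $\bsB_{kl}$ with $\|\bsB_{kl}\|_{\max}\le\varepsilon$ (equivalently, with $\|\bsB_{kl}\|_\infty$ small in $\lo$). Call $\bsA^{(\varepsilon)}$ the matrix of surviving blocks and $\bsA - \bsA^{(\varepsilon)}$ the truncated remainder. By the point process convergence above, with probability tending to one only finitely many blocks survive, each row-block-index and each column-block-index hosts at most one surviving block, and the surviving blocks are at macroscopically separated positions, hence (by $m$-dependence and the block sizes going to infinity) asymptotically independent and each distributed as $P_i\bsQ_i$. For such a configuration, $\bsA^{(\varepsilon)}(\bsA^{(\varepsilon)})'$ is block-diagonal up to permutation, so its nonzero singular values are exactly the union $\bigcup_i \{P_i \sigma_{(i,j)}(\bsQ_i): j\}$ of singular values of the surviving blocks; since each $\bsQ_i\in\lom$ has at most $(m+1)^2$ nonzero entries arranged in an $(m+1)\times(m+1)$ pattern it has at most $m+1$ nonzero singular values, giving the inner sum $\sum_{j=1}^{m+1}$. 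Letting $\varepsilon\downarrow 0$ recovers the full limiting point process on $(0,\infty)$.

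The remaining and technically hardest step is to control the operator norm of the truncated remainder $\bsA - \bsA^{(\varepsilon)}$ and to show it is $o_P(1)$ as $n\to\infty$ then $\varepsilon\to 0$; this is the sample-covariance analogue of the Wigner norm bound that the introduction flags as ``the technically most difficult portion.'' Here I would reuse the machinery of Section \ref{sec:proofwigner}: by the singular-value Weyl inequality, $|\sigma_i(\bsA) - \sigma_i(\bsA^{(\varepsilon)})| \le \|\bsA - \bsA^{(\varepsilon)}\|$, and one bounds $\|\bsA - \bsA^{(\varepsilon)}\|$ by a dyadic decomposition of the surviving-magnitude scales together with crude Frobenius-type and combinatorial (counting of blocks with entries in a given dyadic range) estimates adapted to the $m$-dependent regularly varying field; the key inputs are $\E|X|^s<\infty$ for $s<\alpha$, the centering assumption $\E X_{11}=0$ when $\alpha\ge 2$ (to kill the mean contribution via a symmetrization/Rosenthal argument), and Proposition \ref{prop:block:conv}(i) to count exceedance blocks. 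Once $\|\bsA - \bsA^{(\varepsilon)}\| \to 0$ in probability (in the iterated limit), a standard second-converging-together argument (as in \cite{resnick:2007}) upgrades the truncated convergence to \eqref{eq:ddeede1}. I expect essentially all genuinely new difficulty to be concentrated in this norm estimate; the rest is a faithful transcription of the Wigner proof with ``eigenvalue'' replaced by ``singular value'' and the block grid made rectangular.
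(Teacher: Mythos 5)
Your overall architecture is the same as the paper's: block the rectangular matrix into $r_n\times r_n$ blocks, invoke the rectangular extension of Proposition \ref{thm:PPconvInLo2}, note that with probability tending to one at most one significant block survives per block-row/column so that the surviving part of $\bsA\bsA'$ is block-diagonal up to permutation with singular values $P_i\sigma_{(i,j)}$, and finish with a Weyl-type perturbation bound. For $0<\alpha<2$ your sketch is essentially complete and matches the paper (the paper works with Weyl for eigenvalues of $\bsA\bsA'$ and therefore must separately kill the cross terms $\bsA^{<\vep}(\bsA^{>\vep})'$ via Lemma \ref{lem:offdiagonal}; your use of the singular-value Weyl inequality on $\bsA$ itself sidesteps that, which is a legitimate minor simplification).

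For $2\le\alpha<4$, however, there is a genuine gap in the step you yourself flag as hardest. Your scheme truncates at a \emph{fixed} level $\vep>0$ and proposes to control $\|\bsA-\bsA^{(\vep)}\|$ by ``Frobenius-type'' and Rosenthal/symmetrization estimates, letting $\vep\downarrow 0$ at the end. This cannot work: the Frobenius bound \eqref{eq:frob+markov} diverges like $n^{2-4/\alpha}$ once $\alpha>2$, and a fixed $\vep$ corresponds to truncating $X$ at level $\vep a_{np}\asymp n^{2/\alpha}$, i.e.\ to $\beta=2/\alpha$ in the notation of \eqref{eq:betabounds} --- which violates the upper bound $\beta<\tfrac{2(8-\alpha)}{\alpha(10-\alpha)}<\tfrac{2}{\alpha}$ that the high-moment trace estimate (Proposition \ref{lem:aboundofallevenpaths} and its covariance analogue, Lemma \ref{lem:2<alpha<4offdiagonal2}) actually requires. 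The paper's proof instead uses an $n$-dependent cut $\vep_n=n^{\beta}/a_{np}$ with $\beta$ in the window \eqref{eq:betabounds}, bounds $\|\bsA^{<\vep_n}(\bsA^{<\vep_n})'\|$ by computing $\EE\,\mathrm{Tr}\,(\bsX^{<}(\bsX^{<})')^{s_n}$ with $s_n\gtrsim\log n$ via the path-counting map $\psi$ that collapses $m$-dependent vertices, and then --- because at level $\vep_n$ one can no longer guarantee a single surviving block per row --- introduces the second threshold $\tvep_n$ and the combinatorial Lemmas \ref{lem:2blocks}--\ref{lem:A(2)norm} to show that the intermediate part $\bsA^{(2)}=\bsA^{>\vep_n}-\bsA^{>\tvep_n}$ has at most a bounded number of blocks per row, each supported on a $(2m+1)\times(2m+1)$ square, whence $\|\bsA^{(2)}\|\le 4(2m+1)\tvep_n\to 0$. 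Neither the two-level truncation nor the trace-moment bound (as opposed to a Frobenius/Rosenthal bound) appears in your proposal, and both are needed; ``reuse the machinery of Section \ref{sec:proofwigner}'' is the right instinct, but the concrete estimates you name in its place would not close the argument.
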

Theorem \ref{tm:CovMat} can be reformulated  for the eigenvalues of the sample covariance matrices $\bsA \bsA'$. Using $\lambda_i(\bsA \bsA')= \sigma_i^2(\bsA)$ and the continuous mapping theorem, we get that 
\begin{equation}\label{eq:ddeede2}
\sum_{i=1}^p \delta_{\lambda_i(\bsA\bsA')} \cid  \sum_{i=1}^{\infty} \sum_{j=1}^{m+1}  \delta_{P_i^2 \sigma_{(i,j)}^2} \,, \qquad \nto\,\,.
\end{equation}
If $\bsX^{\infty}$ is an i.i.d. field, equation \eqref{eq:ddeede2} reads as
\begin{equation*}
\sum_{i=1}^p \delta_{\lambda_i(\bsA \bsA')} \cid  \sum_{i=1}^{\infty}  \delta_{P_i^2} \,, \qquad \nto\,\,,
\end{equation*}
with $\theta=1$ in the definition of the points $(P_i)$.
Thus, Theorem \ref{tm:CovMat} generalizes Theorem 2 in \cite{auffinger:arous:peche:2009}.

Similarly to Corollary \ref{cor:1}, one can derive the joint convergence of the $K$ largest eigenvalues of $\bsA\bsA'$ from Theorem \ref{tm:CovMat}.
\begin{corollary}\label{cor:dfg}
Let $K\in\N$ and assume the conditions of Theorem \ref{tm:CovMat}.
Then the $K$ largest eigenvalues of the heavy-tailed sample covariance matrix $\bsA\bsA'$ converge in distribution to the $K$ largest points in a point process of the form	
		\begin{align*}
		\sum_{i=1}^{\infty} \sum_{j=1}^{m+1}  \delta_{P_i^2 \sigma_{(i,j)}^2}\,.
		\end{align*}
\end{corollary}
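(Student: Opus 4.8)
The plan is to deduce Corollary \ref{cor:dfg} directly from Theorem \ref{tm:CovMat} via the continuous mapping theorem, exactly as equation \eqref{eq:ddeede2} is derived in the text. First I would recall that, by Theorem \ref{tm:CovMat}, the point process $\sum_{i=1}^p \delta_{\sigma_i(\bsA)}$ converges in distribution, in the space of point measures on $(0,\infty)$ with the vague topology, to the limiting point process $N:=\sum_{i=1}^{\infty}\sum_{j=1}^{m+1}\delta_{P_i\sigma_{(i,j)}}$. Since $\lambda_i(\bsA\bsA')=\sigma_i(\bsA)^2$ and the map $x\mapsto x^2$ is a homeomorphism of $(0,\infty)$ onto itself, the induced map on point measures (pushing forward the locations by squaring) is continuous for the vague topology; hence $\sum_{i=1}^p \delta_{\lambda_i(\bsA\bsA')}$ converges in distribution to the pushforward of $N$, which is $\sum_{i=1}^{\infty}\sum_{j=1}^{m+1}\delta_{P_i^2\sigma_{(i,j)}^2}$. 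This is precisely \eqref{eq:ddeede2}; call this limit point process $\widetilde N$.

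Next I would pass from point-process convergence to the joint convergence of the $K$ largest points. For a point measure $\mu$ on $(0,\infty)$ that is locally finite away from $0$ and has infinitely many points accumulating only at $0$ — which is the case for $\widetilde N$ almost surely, since the $P_i^2$ are the points of a Poisson process with intensity $d(-\theta y^{-\alpha/2})$ (after squaring) accumulating at $0$, each carrying at most $m+1$ positive marks — the functional $\mu\mapsto(x_{(1)}(\mu),\dots,x_{(K)}(\mu))$ returning the $K$ largest atom locations (with multiplicity) is well-defined and is continuous at $\mu$ provided $\mu$ has no atom exactly at a level where points would be ambiguously counted; more carefully, it is continuous on the set of $\mu$ for which $x_{(K)}(\mu)>x_{(K+1)}(\mu)$ and $\mu$ has no mass escaping to $0$ below level $x_{(K)}(\mu)$. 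The limit $\widetilde N$ lies in this set almost surely: the locations $P_i^2\sigma_{(i,j)}^2$ are a.s. all distinct (the $P_i$ are a.s. distinct, the $\sigma_{(i,j)}$ are independent of them with a continuous joint law for the nonzero ones, and $\widetilde N((0,\epsilon))=\infty$ a.s. for every $\epsilon$, so only finitely many points exceed any level). Therefore the continuous mapping theorem applies and yields
\begin{equation*}
\bigl(\lambda_1(\bsA\bsA'),\dots,\lambda_K(\bsA\bsA')\bigr)\cid \bigl(x_{(1)}(\widetilde N),\dots,x_{(K)}(\widetilde N)\bigr)\,,\qquad \nto\,,
\end{equation*}
which is exactly the assertion that the $K$ largest eigenvalues of $\bsA\bsA'$ converge to the $K$ largest points of $\sum_{i=1}^{\infty}\sum_{j=1}^{m+1}\delta_{P_i^2\sigma_{(i,j)}^2}$. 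Since $p=p_n\to\infty$, the matrix $\bsA\bsA'$ has at least $K$ eigenvalues for all large $n$, so the left-hand side is well-defined.

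The only genuine subtlety — and the one step I would be careful to state precisely rather than wave at — is the continuity of the ``top-$K$ locations'' functional on the appropriate subset of point measures and the verification that $\widetilde N$ lies in that subset almost surely; this is the standard argument found in \cite{resnick:1987,resnick:2007} for converting vague convergence of point processes on $(0,\infty)$ into convergence of the largest order statistics, and it hinges on the facts that (a) $\widetilde N$ has a.s. finitely many points above any positive level, (b) its points are a.s. pairwise distinct, and (c) there is a.s. no atom of $\widetilde N$ at the boundary of the relevant truncation level. Everything else is an immediate consequence of Theorem \ref{tm:CovMat} and the continuous mapping theorem, so no new estimates on the matrices $\bsA$ are needed.
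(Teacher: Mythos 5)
Your proposal is correct and follows the same route the paper intends: Theorem \ref{tm:CovMat} plus the continuous mapping theorem to pass to \eqref{eq:ddeede2}, then the standard Resnick-type argument converting vague convergence of point processes on $(0,\infty)$ into joint convergence of the $K$ largest order statistics (the paper itself only gestures at this via ``similarly to Corollary \ref{cor:1}'' and the references \cite{resnick:1987,resnick:2007}).

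One small correction: your claim that the atoms $P_i^2\sigma_{(i,j)}^2$ of $\widetilde N$ are a.s.\ pairwise distinct is false in general. Within a single cluster the singular values $\sigma_{(i,1)},\dots,\sigma_{(i,m+1)}$ can tie with positive probability --- e.g.\ in the paper's Rademacher example the matrix $\bfQ\bfQ'$ can be a multiple of the identity --- so the event $x_{(K)}(\widetilde N)=x_{(K+1)}(\widetilde N)$ can have positive probability. This does not sink the argument: the top-$K$ functional (with multiplicity) is continuous at any point measure having finitely many atoms above every positive level and no mass escaping to $0$ below the $K$th largest point; ties in the limit are harmless, since nearby point measures must place the corresponding number of points in a neighborhood of the tied location. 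So you should justify continuity by conditions (a) and (c) of your closing paragraph alone, dropping the distinctness claim, and the proof goes through.
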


\begin{remark}\label{rem:211}
As in Remark \ref{rem:111}, we can describe the eigenvectors of $\bsA\bsA'$ in terms of the eigenvectors of the matrices $\bsQ_i' \bsQ_i$. More precisely, by mimicking the arguments in the proof of \cite[Theorem 3.11]{heiny:mikosch:2017:iid} or \cite[Theorem 3.7]{heiny:mikosch:2019:stochvol} one can show that the eigenvector associated with the $k$th-largest eigenvalue of $\bsA\bsA'$ is an appropriately shifted version of an eigenvector of some $\bsQ_i' \bsQ_i$. Since under $m$-dependence $\bsQ_i' \bsQ_i$ are zero outside of some block of size $(m+1)\times (m+1)$, this implies that the eigenvectors of $\bsA\bsA'$ are localized asymptotically. 
\end{remark}

	\subsection{Examples of $m$-dependent matrix ensembles}\label{sec:example} 
	
%

\noindent{\bf Two-dimensional linear processes}

Consider a two-dimensional moving average structure of order $m>0$: 
\begin{equation}\label{eq:1}
X_{it}=\sum_{k,l=0}^{m} h_{kl} Z_{i-k,t-l}\,,\qquad i,t\in\Z\,,
\end{equation}
where $(Z_{it})_{i,t\in \Z}$ is a field of {i.i.d.} regularly varying random variables with index $\alpha\in (0,4)$ and $\bfH=(h_{kl})_{k,l=0,\ldots,m}$ is an array of real numbers. If $\alpha\ge2$, additionally assume that $\E[Z_{11}]=0$.
From Example 3.1 in \cite{basrak:planinic:2020} we know that 
\begin{equation*}
\bsQ \eid \left( \frac{K h_{ij}}{\max_{k,l}|h_{kl}|} \right)_{ij} \quad \mbox{ and } \quad \theta= \frac{\max_{k,l} |h_{kl}|^{\alpha}}{\sum_{k,l} |h_{kl}|^{\alpha}}\,,
\end{equation*}
where  $K$ is  $\pm 1$-valued random variable, such that 
	$\PP (K=1) = \lim_{x\toi} \PP (Z_{11} >x) / \PP(|Z_{11}|>x)$.
If we denote the ordered eigenvalues of $\bfH\bfH'$ by $v_1\ge \cdots \ge v_{m+1}$, then the (possible) non-zero singular values of $\bsQ$ are given by
\begin{equation*}
\sigma_j(\bsQ)=\frac{v_j^{1/2}}{\max_{k,l} |h_{kl}|}\,, \qquad j=1,\ldots,m+1\,.
\end{equation*}
For the Wigner matrix $\hA$ defined in \eqref{eq:defwigner}, Theorem \ref{tm:ReflMat} yields
\begin{equation}\label{eq:wign1}
N_n^+ \cid \sum_{i=1}^{\infty} \sum_{j=1}^{m+1}  \delta_{P_i v_j^{1/2}/\max_{k,l} |h_{kl}|}\,, \qquad \nto\,,
\end{equation}
where the $(P_i)_{i\in\N}$ form a Poisson point process on {$(0,\infty)$} with intensity measure $d(-\theta y^{-\alpha})$ and $P_1>P_2>\cdots$.

For the sample covariance matrices $\bsA\bsA'$, equation \eqref{eq:ddeede2} gives
\begin{equation}\label{eq:scov1}
\sum_{i=1}^p \delta_{\lambda_i(\bsA\bsA')} \cid  \sum_{i=1}^{\infty} \sum_{j=1}^{m+1}  \delta_{P_i^2 v_j/\max_{k,l} |h_{kl}|^2} \,, \qquad \nto\,\,.
\end{equation}
In the special case,	$X_{it}= Z_{it}+ Z_{i,t-1}-2 (Z_{i-1,t}- Z_{i-1,t-1}),i,t\in\Z$, we have
	$v_1=8$ and $v_2=2$.
	Using Corollary \ref{cor:dfg} we find the joint limit of the two largest eigenvalues of the sample covariance matrix:
	\begin{equation*}
	\big(\lambda_1(\bsA\bsA'),\lambda_2(\bsA\bsA')\big) \cid
	\big(2 P_1^2, \tfrac{P_1^2}{2} \vee 2 P_2^2 \big)\,, \qquad \nto\,.
	\end{equation*}



\noindent{\bf Two-dimensional max--linear processes}

	Instead of \eqref{eq:1} one can consider moving maxima 
	\begin{equation*}
	X_{it}=\bigvee_{k,l=0}^m h_{kl} Z_{i-k,t-l}\,,\qquad i,t\in\Z\,,
	\end{equation*}
	with  nonnegative coefficients $h_{kl}$ and nonnegative regularly varying i.i.d.~noise
	$(Z_{i,t})$ {with index $\alpha\in (0,2)$}. It is straightforward to see
	that one again ends up with an $m$--dependent regularly varying array which has the
	same parameter $\theta$ and the same distribution of $\bsQ$ as in the moving average process above. Therefore the limiting relations in \eqref{eq:wign1} and \eqref{eq:scov1} hold unaltered for this process as well, we refer to
	Deheuvels~\cite{deheuvels:1983} for an introduction to moving maxima model
	in  such a model in a time series context.
	
	\noindent{\bf Two-dimensional linear processes, random coefficients}

	An interesting way of generalizing \eqref{eq:1} is to consider a moving average field with stationary and random coefficients $h_{k,l}$ { independent of $(Z_{it})$}. The analysis of such a field is in general more technical but can been done, for a discussion of the corresponding model in a time series context and related references see the recent book by Kulik and Soulier 
	\cite{kulik:soulier:2020}.
	Here, for simplicity consider 
	\begin{equation*}
	X_{it}= 4 Z_{it}+ \vep_{i-1,t} Z_{i-1,t}+ 3 Z_{i-1,t-1}\,,\quad i,t\in\Z\,,
	\end{equation*}
	with $(Z_{it})_{i,t\in \Z}$ as above and independent of an i.i.d. sequence
	$(\vep_{it})_{i,t\in \Z}$ consisting of  Bernoulli random variables with parameter $q \in (0,1)$.
	It can be shown by direct calculation, that the sequence $(X_{it})$ is $1$--dependent, stationary and regularly varying. In this case 
	\begin{equation*}
	\bsQ \eid \left( 
	\begin{array}{cc}
	1 & 0 \\
	\vep_{11}/4 & 3/4 \\
	\end{array} \right) \quad \mbox{ and } \quad \theta =\frac{4^{\alpha}}{4^\alpha+q+3^\alpha}\,.
	\end{equation*}
	Note that the non-zero singular values of $\bsQ$ are $(18/16,8/16)$ with probability $q$
	and $(1,9/16)$  with probability $1-q$. Thus, in this case for
	the sample covariance matrices $\bsA\bsA'$, equation \eqref{eq:ddeede2} gives
	\begin{equation*}
	\sum_{i=1}^p \delta_{\lambda_i(\bsA\bsA')} \cid  \sum_{i=1}^{\infty} 
	\left( \delta_{P_i^2 \frac{16+ 2 \vep_i }{16 } } +  \delta_{P_i^2 \frac{8+ 1-\vep_i }{16 }}  \right) \,, \qquad \nto\,\,,
	\end{equation*} 
	where $(\vep_i)$ is an i.i.d.\ Bernoulli sequence with parameter $q \in (0,1)$
	independent of the Poisson process $ \sum_{i=1}^{\infty} 
	\delta_{P_i}$.


Another example with random coefficients is
	$$X_{it}=\vep_{it}\sum_{j,s=0}^m  Z_{i+j,t+s}$$
		with $(Z_{it})_{i,t\in \Z}$ as above and independent of an i.i.d.\ sequence
	$(\vep_{it})_{i,t\in \Z}$ consisting of   Rademacher random variables (mean-zero, $\{-1,1\}$-valued).
	The distribution of
	$\bsQ$ can be viewed as an $(m+1)\times (m+1)$ matrix with { independent} Rademacher entries.
	If  the eigenvalues of $\bfQ\bfQ'$ are equal in distribution to $(V_1, \cdots, V_{m+1})$, and $(V_1^{(i)},\cdots,V_{m+1}^{(i)})_{i\in\N}$ are i.i.d. copies of this random vector, then \eqref{eq:ddeede2} describes the distribution limiting eigenvalues of the sample covariance matrices $\bsA\bsA'$ as
	\begin{equation*}
	\sum_{i=1}^p \delta_{\lambda_i(\bsA\bsA')} \cid  \sum_{i=1}^{\infty} \sum_{j=1}^{m+1}  \delta_{P_i^2 V_j^{(i)}} \,, \qquad \nto\,\,.
	\end{equation*}
	

	\begin{remark}
	In this final example, suppose one takes a family of such sample covariance matrix sequences, one matrix sequence for each $m\in\N$, and normalizes the Rademacher entries of $\bfQ$ by $m^{-1/2}$. 
	If one takes the double limit, first as $n\to\infty$ and then as $m\to\infty$, then by standard random matrix results \cite{soshnikov2002note} and the fact that the properly normalized largest eigenvalue of  $\bfQ\bfQ'/m$ 
 asymptotically  follows a Tracy-Widom(1) distribution, one can obtain the second order fluctuations of the maximal  eigenvalues. In particular, after taking the limit in $n$ for each matrix sequence, for large values of $m$  one will see associated to each $P_i$, a `local' maximal eigenvalue asymptotically of the form $P_i^2(4+2^{4/3}W_1/m^{2/3})$, where $W_1$ follows a Tracy-Widom(1) distribution and is independent of $P_{i}$. (Note that, when applying \cite{soshnikov2002note},  $\gamma=1$ since we have $(m+1)\times (m+1)$ matrices.)
	\end{remark}

\section{Proof of Theorem \ref{tm:ReflMat}}	\label{sec:proofwigner}
	
	{It will be useful in the sequel} to truncate the matrices.
	For the matrix $\bsA=(A_{ij})$ 
	and a constant $\vep >0$, we
	introduce the truncated matrix $\bsA^{>\vep}$ with entries
	\begin{align}\label{eq:truncate}
	A^{>\vep }_{ij} := A_{ij}  \1{{|A_{ij}| > \vep } }\,.
	\end{align}
	Similarly, by $\bsA^{<\vep}$ we denote the remainder $ \bsA^{<\vep }:=
	\bsA - \bsA^{>\vep }$.

Also, for the proof it will be notationally convenient to set
$$b_n:=a_{n^2}.$$
	
	We will show that due to Weyl's inequality, the effect of thresholding by $\vep$ on the eigenvalues is asymptotically negligible as $n\toi$. It is pedagogical to treat separately the case where $\alpha<2$, since the basic structure of the proof will be seen here without having to go into too many details. 
	
	\vspace{2mm}
\subsection{Case: $0<\alpha<2$}
	
	Weyl's inequality yields
	\begin{align}\label{eq:weyl2}
	\max_i | \lambda_{i}  (\hA_n)   -  \lambda_{i}  (\hA_n^{>\vep} )  |
	\leq \| \hA_n^{<\vep} \|\,.
	\end{align}
	If we show that for any 
	$\delta >0$,
	\begin{align} \label{eq:TruncError}
	\lim_{\vep\to 0} \limsup_{n\toi} \PP\left( \|\hA_n^{<\vep} \| > \delta\right) 
	&=0\, ,
	\end{align}
	then it suffices to work with $(\hA_n^{>\vep})$ since the distribution of the point process of its eigenvalues has the same asymptotic behavior as that of the point process of eigenvalues of $(\hA_n)$.

First, we bound the spectral norm by the Frobenius norm and apply Markov's inequality to get
	\begin{align}\label{eq:frob+markov}
		\lim_{\vep\to 0} \limsup_{n\toi} \PP\left( \|\hA_n^{<\vep} \| > \delta\right) 
	& \le  
	\lim_{\vep\to 0} \limsup_{n\toi}  \frac{n^2}{\delta^2 b_n^2}
	\EE\left(   X_{11}^2 \1{|X_{11}| <\vep b_n}  \right) \,.
	\end{align}
	Since the random variable $ X_{11}$ is regularly varying with index $\alpha$,
	for $\alpha < 2$, Karamata's theorem for truncated moments (see \cite{bingham1989regular} or \cite[Appendix B.4]{buraczewski2016stochastic}) yields that the \rhs~in \eqref{eq:frob+markov} behaves as
	\begin{align}\label{eq:e to 0}
	\lim_{\vep\to 0} \lim_{n\toi}  \frac{n^2}{\delta^2 b_n^2}
	b_n^2 \vep^2 \PP\left( {|X_{11}|} >\vep b_n  \right) \frac{\alpha}{2-\alpha}
	& = \lim_{\vep\to 0} \frac{\vep^{2-\alpha}}{\delta^2}  \frac{\alpha}{2-\alpha}=0\,.
	\end{align} 
\subsubsection{Proof of Theorem \ref{tm:ReflMat} for $0<\alpha<2$}	
{In the case of {i.i.d.} entries in the upper triangle of $\hX$,} Lemma 1(c)  of \cite{soshnikov:2004} makes simple use of \eqref{regvar} to show that for any $\vep$, there is at most one nonzero entry in any given row or column of $\hA_n^{>\vep}$  with probability going to 1 as $n\to\infty$, {i.e.},
\begin{align}\label{eq:soshn}
\PP(\exists 1 \leq i \leq n, \,  \exists j\neq k\, \text{ such that }\,   |{\hat A}_{ij}|>\vep, 
\,  \,|{\hat A}_{ik}|>\vep )\to 0.
\end{align}	
Another straightforward observation is that diagonal elements are asymptotically insignificant (c.f. Lemma 1(a) of \cite{soshnikov:2004})
\begin{align*}
\PP(\exists 1 \leq i \leq n \, \text{ such that }\,   |{\hat A}_{ii}|>\vep)\to 0.
\end{align*}
	By the above two facts and the symmetry of the matrix, one can directly check that with probability going to 1, the ordered largest eigenvalues of $\hA_n^{>\vep}$ are the ordered {largest-in-absolute-value  entries of $\hA_n^{>\vep}$  in the upper triangle (${\hat A}_{ij}$ such that $i\le j$), which after taking absolute values, form a
	Poisson process 
	on $(\vep,\infty)$
	with intensity measure $ \frac 1 2  d(y^{-\alpha})$. Similarly, the smallest eigenvalues are the negatives of
	the ordered largest-in-absolute-value  entries of $\hA_n^{>\vep}$ in the upper triangle (see \cite{soshnikov:2004} or Lemma \ref{lem:factor2} for details).}


	Now, applying Weyl's inequality and using \eqref{eq:e to 0},
	reproduces the limiting point process of largest eigenvalues for the sequence $(\hA_n)_n$, as discussed in \cite{soshnikov:2004} (actually, here we gave a slightly different argument than \cite{soshnikov:2004} since he does not use Weyl's inequality).

{
For the situation with $m$-dependence, we will use the following lemma to see that, with respect to the largest eigenvalues, the dependence remains local.}


  
\begin{lemma}\label{lem:En} Let $\alpha\in (0,2)$. For $\vep>0$, consider the block form of $\hA_n$ given in \eqref{eq:block form of reflected matrix}.
	 Then the probability of the event that  $\hA_n^{>\vep}$ has more than one nonzero block (i.e., there is some nonzero entry in the block) in some row or column tends to zero as $n\toi$, i.e., $\lim_{\nto} \P(S_1^{n,\vep})=1$, where $S_1^{n,\vep}$ is the complement of the set
\begin{equation*}
\Big\{\exists 1\le i,j,k\le k_n  \, \text{ with } j\neq k \text{ such that }\,   \|\hB_{ij} \|_{\max}>\vep, \,  \,\|\hB_{ik} \|_{\max}>\vep   \Big\}\, .
\end{equation*}
We also have $\lim_{\nto} \P(S_2^{n,\vep})=1$, where $S_2^{n,\vep}$ is the complement of the set
\begin{equation*}
\Big\{\exists 1\le i\le k_n  \, \text{ such that }\,   \|\hB_{ii} \|_{\max}>\vep \Big\}\, .
\end{equation*}
\end{lemma}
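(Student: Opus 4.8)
The plan is to prove both assertions by elementary first-moment (union bound) estimates on the matrix entries, using only the regular variation \eqref{regvar} of $X$ and the $m$-dependence of the field; Proposition~\ref{prop:block:conv} is not needed here, as a union bound already produces the right order of magnitude. Throughout I keep the normalization $n=k_nr_n$ (so $n^2=k_n^2r_n^2$) adopted in this section, and record the basic tail estimate, which follows from $b_n=a_{n^2}$, from $n^2\P(|X|>a_{n^2})\to1$ (see \eqref{eq:defan}), and from \eqref{regvar}:
\begin{equation*}
\P(|X|>b_n\vep)=(1+o(1))\,\vep^{-\alpha}\,n^{-2}=(1+o(1))\,\vep^{-\alpha}\,(k_nr_n)^{-2}\,,\qquad\nto\,.
\end{equation*}
In particular, a single $r_n\times r_n$ block of entries of $\hA$ has an entry of modulus $>\vep$ with probability at most $r_n^2\,\P(|X|>b_n\vep)=(1+o(1))\,\vep^{-\alpha}/k_n^2$. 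This immediately gives $S_2^{n,\vep}$: the diagonal block $\hB_{ii}$ is symmetric and has at most $r_n^2$ entries, so $\P(\|\hB_{ii}\|_{\max}>\vep)\le r_n^2\,\P(|X|>b_n\vep)$, and summing over the $k_n$ diagonal blocks bounds $\P\big((S_2^{n,\vep})^{c}\big)$ by $k_nr_n^2\,\P(|X|>b_n\vep)=O(1/k_n)\to0$.

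For $S_1^{n,\vep}$: on the event $S_2^{n,\vep}$, which has probability $\to1$, neither of the indices $j,k$ can equal $i$, so it is enough to show
\begin{equation*}
\sum_{i=1}^{k_n}\ \sum_{\substack{1\le j<k\le k_n\\ j,\,k\ne i}}\ \P\big(\|\hB_{ij}\|_{\max}>\vep,\ \|\hB_{ik}\|_{\max}>\vep\big)\ \longrightarrow\ 0\,.
\end{equation*}
Since $\hA$ is symmetric, $\|\hB_{ij}\|_{\max}=\|\bsB_{\min(i,j),\,\max(i,j)}\|_{\max}$ with $\bsB_{kl}$ as in \eqref{eq:blockdef}, so each summand compares two \emph{distinct} original blocks of $(X_{ab})$; by $m$-dependence two such blocks are independent whenever their index rectangles are more than $m$ apart in the sense of the $m$-dependence condition, and since $r_n\to\infty$ this fails only for $O(k_n^2)$ ``adjacent'' triples $(i,\{j,k\})$ --- namely those in which the two original blocks share a row-block or a column-block and sit in consecutive positions, together with the diagonally consecutive triples $(j,j+1,j+2)$ that occur when $j<i<k$. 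For each of the at most $k_n^3/2$ independent triples, independence and the single-block bound yield $(r_n^2\P(|X|>b_n\vep))^2=O(\vep^{-2\alpha}/k_n^4)$ as an upper bound for the summand, hence $O(\vep^{-2\alpha}/k_n)\to0$ in total. For an adjacent triple I would peel off, from one of the two blocks, a strip of at most $mr_n$ entries (the $m$ rows or $m$ columns nearest the other block), chosen so that the complementary ``core'' lies more than $m$ away from the other block: either that strip already carries an entry of modulus $>b_n\vep$ --- probability $\le mr_n\,\P(|X|>b_n\vep)$ --- or it does not, and then the core is independent of the other block, giving probability $\le(r_n^2\P(|X|>b_n\vep))^2$. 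Summed over the $O(k_n^2)$ adjacent triples these two bounds contribute $O(m\vep^{-\alpha}/r_n)+O(\vep^{-2\alpha}/k_n^2)\to0$. Adding the contributions of the independent and the adjacent triples gives $\P\big((S_1^{n,\vep})^{c}\big)\to0$.

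The only step that is not essentially a one-line union bound --- and hence the crux of the argument --- is the treatment of adjacent blocks: a naive union bound over the $O(k_n^2)$ adjacent pairs would give only an $O(1)$ bound, so one genuinely has to use that a large entry of a block typically lies far ($\gg m$) from that block's boundary, which is precisely what the core/strip splitting exploits. One must also keep track of the bookkeeping that expresses blocks of the symmetrized matrix $\hA$ through the original blocks $\bsB_{kl}$, and verify that two distinct blocks in a block-row of $\hA$ always come from two distinct original blocks.
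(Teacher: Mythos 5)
Your proposal is correct and follows essentially the same route as the paper's proof: a union bound over block-rows, independence for blocks separated by more than one position, and, for adjacent blocks, peeling off a width-$m$ boundary strip (of $O(mr_n)$ entries) so that the remaining core is independent of the neighboring block, giving the same $O(1/k_n)+O(1/r_n)$ bound. The only differences are cosmetic — you bound the single-block tail by an entrywise union bound rather than invoking Proposition~\ref{prop:block:conv}, and you spell out the symmetrization bookkeeping (including the diagonally consecutive triples) slightly more explicitly than the paper does.
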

\begin{proof}
	Consider the first row  and blocks $\hB_{1j}$ and $\hB_{1k}$ 
	assuming without loss of generality that $k>j$. {Since $m$ is fixed }
	and $r_n$ goes to infinity (recall $r_n=n/k_n$),  for fixed $j+1<k$ and $n$ large enough, the two blocks are independent from each other and
	\begin{align} \label{eq:independent spectral norms}
	\PP(\|\hB_{1j}\|_{\max}>\vep, 
	\,  \,\|\hB_{1k}\|_{\max}>\vep ) = \PP(M_{r_n}> b_n \vep)^2
	= O(k_n^{-4})\,, 
	\end{align}
by Proposition~\ref{prop:block:conv}\,. If however $k=j+1$, then
\begin{align*}
\lefteqn{\PP\(\|\hB_{1j}\|_{\max}>\vep, 
\,  \,\|\hB_{1k}\|_{\max}>\vep \)} \\
& \leq  O(k_n^{-4}) + \PP\(
	\max \{  \hat X_{i\ell}/b_n  : i\in (0, r_n], \ell \in(j r_n -m, j r_n] \} > \vep \) \\
&= O(k_n^{-4}) + {O ( n^{-1} k_n^{-1})\,.} 
\end{align*}
By stationarity the same upper bound holds for any other row (or any column by symmetry of the matrix $\hA_n^{>\vep}$) and therefore we can use a basic union bound to get
\begin{align*}
 \PP((S_1^{n,\vep})^c) & 
 \le k _n ^3   O(k_n^{-4}) + k _n ^2   {O ( n^{-1} k_n^{-1})}  = O(1/k_n) + O(1/r_n)
 \end{align*}
which tends to 0 since $k_n \toi$ and $r_n \toi$. The proof of $\lim_{\nto} \P(S_2^{n,\vep})=1$ is analogous.
\end{proof}
The above lemma implies that $\P(S^{n,\vep})\to 1$, where $S^{n,\vep}:=S_1^{n,\vep} \cap S_2^{n,\vep}$.


\begin{lemma}\label{lem:factor2}
Let $j\le \text{rank}(\hA^{>\vep})/2$. On the set $S^{n,\vep}$, the $j$th largest and $j$th smallest eigenvalues of the matrix $\hA^{>\vep}$ are given by $\lambda_j$ and $-\lambda_j$, respectively, where $\lambda_j$ is the $j$th largest value (counted with multiplicity) in the set
 $$\bigcup_{(k,l): \|\hB_{kl}\|_{\max}>\vep, k<l} \{\sigma_1(\hB_{kl}^{>\vep}), \ldots, \sigma_{r_n}(\hB_{kl}^{>\vep})\}\,,$$
where $\sigma_i(\hB_{kl}^{>\vep})$ denotes the $i$th largest singular value of $\hB_{kl}^{>\vep}$. 
\end{lemma}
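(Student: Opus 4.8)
The plan is to exploit the near block-diagonal structure of $\hA^{>\vep}$ on the event $S^{n,\vep}$. On $S_1^{n,\vep}$ every row and every column of the $k_n\times k_n$ block array contains at most one nonzero block; combined with the symmetry $\hB_{kl}^{>\vep}=(\hB_{lk}^{>\vep})'$, this forces the nonzero blocks of $\hA^{>\vep}$ to come in pairs $\{(k,l),(l,k)\}$ with $k<l$, together with possibly some diagonal blocks $\hB_{kk}^{>\vep}$ — but on $S_2^{n,\vep}$ all diagonal blocks vanish. So, after a simultaneous permutation of rows and columns of $\hA^{>\vep}$ (which does not change the eigenvalues), the matrix is orthogonally equivalent to a block-diagonal matrix whose diagonal blocks are either zero or of the $2r_n\times 2r_n$ form
\begin{equation*}
\begin{pmatrix} \bsO & \hB_{kl}^{>\vep} \\ (\hB_{kl}^{>\vep})' & \bsO \end{pmatrix}
\end{equation*}
for each pair $(k,l)$ with $k<l$ and $\|\hB_{kl}\|_{\max}>\vep$.

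First I would make the permutation argument precise: since on $S^{n,\vep}$ the bipartite ``block-adjacency'' graph on $\{1,\dots,k_n\}$ (an edge between $k$ and $l$ when $\hB_{kl}^{>\vep}\neq\bsO$) is a disjoint union of single edges (no vertex has degree $\ge 2$, no loops), one can relabel so that the nonzero structure is block-diagonal with $2\times2$ (in block units) diagonal blocks plus zero blocks. The spectrum of $\hA^{>\vep}$ is then the disjoint union (with multiplicity) of the spectra of these pieces and a pile of zeros.

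Second I would compute the spectrum of each $2\times2$ piece. For any matrix $\bsC$ (here $\bsC=\hB_{kl}^{>\vep}$), the Hermitian matrix $\left(\begin{smallmatrix}\bsO & \bsC \\ \bsC' & \bsO\end{smallmatrix}\right)$ has eigenvalues $\pm\sigma_1(\bsC),\dots,\pm\sigma_{r_n}(\bsC)$ — this is the standard dilation/SVD identity: if $\bsC=U\Sigma V'$ is an SVD, then the eigenvectors are $\tfrac1{\sqrt2}(u_i,\pm v_i)$ with eigenvalues $\pm\sigma_i$. Hence each pair $(k,l)$ with $k<l$ contributes exactly $\{\pm\sigma_1(\hB_{kl}^{>\vep}),\dots,\pm\sigma_{r_n}(\hB_{kl}^{>\vep})\}$ to the spectrum, and the remaining eigenvalues are $0$. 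Collecting the positive eigenvalues across all such pairs gives precisely the multiset $\bigcup_{(k,l):\|\hB_{kl}\|_{\max}>\vep,\ k<l}\{\sigma_1(\hB_{kl}^{>\vep}),\dots,\sigma_{r_n}(\hB_{kl}^{>\vep})\}$, and the negative eigenvalues are their negatives; ordering yields $\lambda_j$ and $-\lambda_j$ as claimed, with the constraint $j\le\operatorname{rank}(\hA^{>\vep})/2$ ensuring we stay among the genuinely nonzero eigenvalues (the spectrum being symmetric about $0$, the positive part has exactly $\operatorname{rank}(\hA^{>\vep})/2$ entries counted with multiplicity).

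I do not expect a serious obstacle here — the result is essentially bookkeeping once $S^{n,\vep}$ is in force. The only point requiring a little care is the permutation/relabeling step: one must check that ``at most one nonzero block per row and per column'' together with symmetry really does preclude longer chains or cycles in the block-adjacency graph (it does, since every vertex has degree at most $1$), so that the graph decomposes into isolated vertices and isolated edges and no $3\times3$-or-larger indecomposable blocks can appear. Everything else — the SVD dilation identity and the counting of zero eigenvalues — is routine linear algebra.
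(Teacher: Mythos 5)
Your proposal is correct and follows essentially the same route as the paper: the key ingredient in both is the dilation identity that $\left(\begin{smallmatrix}\bsO & \bsC\\ \bsC' & \bsO\end{smallmatrix}\right)$ has eigenvalues $\pm\sigma_i(\bsC)$, applied to each surviving off-diagonal block pair, with $S^{n,\vep}$ guaranteeing that these pairs do not interact. The only cosmetic difference is that you pass to block-diagonal form by a permutation, whereas the paper writes down the corresponding zero-padded eigenvectors directly inside the full matrix; both make the accounting of all nonzero eigenvalues explicit.
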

\begin{proof}
We start with some useful facts about the eigenvalues of blockdiagonal matrices.
By the Schur complement formula
$$\det \begin{pmatrix}
A &B\\
C & D
\end{pmatrix}
= \det(A-B D^{-1} C) \det(D)\,,
$$
we see that the following two statements are equivalent for any real valued-matrix $B$:
\begin{itemize}
\item[(i)] $\lambda^2$ is an eigenvalue of $B'B$.
\item[(ii)] $\pm \lambda$ are eigenvalues of $\begin{pmatrix}
0 &B\\
B' & 0
\end{pmatrix}$.
\end{itemize}
Assume $\lambda^2>0$ is an eigenvalue of $B'B$ with associated eigenvector $w$. It is easy to check that 
\begin{equation}\label{eq:dgsrdd}
\begin{pmatrix}
0 &B\\
B' & 0
\end{pmatrix}
\begin{pmatrix}
\lambda^{-1}Bw\\
w
\end{pmatrix} 
= \lambda
\begin{pmatrix}
\lambda^{-1}Bw\\
w
\end{pmatrix}
\quad \!\! \text{and} \quad \!\!
\begin{pmatrix}
0 &B\\
B' & 0
\end{pmatrix}
\begin{pmatrix}
\lambda^{-1}Bw\\
-w
\end{pmatrix}
=-\lambda
\begin{pmatrix}
\lambda^{-1}Bw\\
-w
\end{pmatrix}.
\end{equation}

To see that the lemma holds in the more general setting of the form of matrices satisfied by $\hA^{>\vep}$,  consider $(k,l)$ such that $\|\hB_{kl}\|_{\max}>\vep$ and $k<l$. Let $w_{kl,i}\in \R^{r_n}$ be an eigenvector of ${\hB_{kl}^{>\vep}}' \hB_{kl}^{>\vep}$ associated with eigenvalue $\sigma_i^2(\hB_{kl}^{>\vep})>0$. Recalling that on the set $S^{n,\vep}$, we have that $\hA^{>\vep}$ has at most one nonzero block in every row or column and none on the diagonal, we see analogously to \eqref{eq:dgsrdd} that 
\begin{equation}\label{eq:eigenvec}
\begin{split}
&(0_{(k-1)r_n}, \sigma_i^{-1}(\hB_{kl}^{>\vep})  (\hB_{kl}^{>\vep} w_{kl,i})', 0_{(l-k-1)r_n}, w_{kl,i}',0_{n-l r_n})' \quad \text{ and }\\
& (0_{(k-1)r_n}, \sigma_i^{-1}(\hB_{kl}^{>\vep})  (\hB_{kl}^{>\vep} w_{kl,i})', 0_{(l-k-1)r_n}, -w_{kl,i}',0_{n-l r_n})'
\end{split}
\end{equation}
are eigenvectors of $\hA^{>\vep}$ associated with eigenvalues $\sigma_i(\hB_{kl}^{>\vep})$ and $-\sigma_i(\hB_{kl}^{>\vep})$, respectively. Here, $0_k$ denotes the $k$-dimensional vector of zeros. Here, we have constructed eigenvectors to all nonzero eigenvalues of $\hA^{>\vep}$. 
\end{proof}

\begin{remark}\label{rem:beforeproof}
From now on when we write $N_1=N_2$ for two point processes $N_1,N_2$ we mean that $N_1(D)=N_2(D)$ for any set $D\subset (-\infty,\infty)\backslash \{0\}$. Since $0\notin D$ we then have, for example, 
\begin{equation*}
N_n^+ := \sum_{i=1}^n \delta_{\lambda_i(\hA)} \indicator_{\{\lambda_i(\hA)>0\}}= \sum_{i=1}^n \delta_{\lambda_i(\hA)} \indicator_{\{\lambda_i(\hA)\ge 0\}}\,.
\end{equation*}
\end{remark}

On the set $S^{n,\vep}$, we have by Lemma \ref{lem:factor2} that
\begin{equation}\label{eq:ssddd}
\begin{split}
\Big( &\sum_{i=1}^n \delta_{\lambda_i(\hA^{>\vep})} \indicator_{\{\lambda_i(\hA^{>\vep})\ge 0\}},\,\sum_{i=1}^n \delta_{\lambda_i(\hA^{>\vep})} \indicator_{\{\lambda_i(\hA^{>\vep})\le0\}}\Big)\\
&= \Big( \sum_{k=1}^{k_n}\sum_{l=k+1}^{k_n} \sum_{j=1}^{r_n} \delta_{\sigma_j(\hB_{kl}^{>\vep})}, \sum_{k=1}^{k_n}\sum_{l=k+1}^{k_n} \sum_{j=1}^{r_n} \delta_{-\sigma_j(\hB_{kl}^{>\vep})}   \Big)\,.
\end{split}
\end{equation}
Note that by Lemma \ref{lem:En}, $\P(S^{n,\vep})\to 1$. Therefore it suffices to focus on the positive eigenvalues of $\hA^{>\vep}$. 

Next, we will show that {the singular values of the $\hB_{kl}^{>\vep}$ converge to the singular values of the $(P_i\bsQ_{i})^{>\vep}$ in the right sense.} 

 To see this, let us first observe that the mapping $\bx \mapsto \bx^{>\vep}$ is continuous on $\lo$, except maybe at the exceptional points $\bx =(x_{ij})$ with the property that 
$|x_{ij}| = \vep$ for some ${i,j\in\ZZ}$. However, since the limiting process in Proposition~\ref{thm:PPconvInLo2}  almost surely has no such exceptional points, from \eqref{eq:conv to ppp} we conclude
that, {as a point process in $\lo\backslash \{{\bf 0}\}$, where ${\bf 0}$ is the zero vector,}
\begin{align}
	 \sum_{k=1}^{k_n}\sum_{l=k+1}^{k_n} \delta_{\hB_{kl}^{>\vep}}  
	\dto \sum_{i=1}^\infty\delta_{ (P_i\bsQ_{i}) ^{>\vep}} \; . 
\end{align}
 In other words, {the distribution of the points $(\hB_{kl}^{>\vep})_{k,l}$ converges in $\lo$ to the distribution of} points $(P_i\bsQ_{i})^{>\vep}_i$.

Note that convergence in $\lo$ by itself does not imply {directly that the singular values of $\hB_{kl}^{>\vep}$ converge in distribution} to the singular values of $(P_i\bsQ_{i})^{>\vep}$, as $n\to\infty$, as a point process in $\RR$.
{ The problem is that the elements of $\lo$ are  infinite-dimensional matrices in general. However, if $\bx_n^{>\vep} \to \bx^{>\vep}$ as $n \toi$ in $\lo$ and $\bx \in \tilde{l}_{0,m}$, then 
necessarily $\bx_n^{>\vep} \in \tilde{l}_{0,m}$, for all large enough $n$.} 
%
Taking singular values of elements in $\tilde{l}_{0,m}$ corresponds to taking
singular values in the space of $(m+1)\times (m+1)$ matrices, which on this space is a continuous mapping. 
Therefore
\begin{align}\label{eq:eval conv}
\sum_{k=1}^{k_n}\sum_{l=k+1}^{k_n} \sum_{j=1}^{r_n} \delta_{\sigma_j(\hB_{kl}^{>\vep})}  
\dto \sum_{i=1}^\infty \sum_{j=1}^{m+1} \delta_{ \sigma_j((P_i\bsQ_{i}) ^{>\vep})} \; .
\end{align}
Moreover, {with probability one, as $\vep \to 0$,}
\begin{align*}
\sum_{i=1}^\infty \sum_{j=1}^{m+1} \delta_{ \sigma_j((P_i\bsQ_{i}) ^{>\vep})} \;  
 \to \sum_{i=1}^\infty \sum_{j=1}^{m+1} \delta_{ \sigma_j(P_i\bsQ_{i})}=\sum_{i=1}^{\infty} \sum_{j=1}^{m+1}  \delta_{P_i \sigma_{(i,j)}}\;. 
\end{align*}
Finally, combined with \eqref{eq:weyl2}, \eqref{eq:frob+markov}, and \eqref{eq:e to 0} this gives us that 
\begin{equation*}
N_n^+ \cid \sum_{i=1}^{\infty} \sum_{j=1}^{m+1}  \delta_{P_i \sigma_{(i,j)}}\,, \qquad \nto\,.
\end{equation*}
The joint convergence of the vector $(N_n^+,N_n^-)$ in \eqref{eq:ddeede} follows in view of \eqref{eq:ssddd}.

\subsection{ Case: $2\le \alpha<4$}
	

When $2\le \alpha<4$, the main reason the  analysis is more involved is because we require a truncation level $\vep_n\to 0$ that depends on $n$, which in our case we will set to  $\vep_n:=n^{\beta}/b_n$, where $\beta$ satisfies
{\begin{align}\label{eq:betabounds}
	\frac{4}{3\alpha}<\beta<\frac{2(8-\alpha)}{\alpha(10-\alpha)}.
\end{align}	
The lower bound for $\beta$ is used in order to show that only finitely many blocks remain in any given row of $\hA^{>\vep_n }$ (see Subsection \ref{sec:second trunc} and Lemma \ref{lem:consecblock}), while the upper bound is needed for \eqref{eq:2^2sn} which will ultimately allow us to obtain $\| \hA^{<\vep_n } \| \cip 0$ as $\nto$.
	
	A second complication, but not as significant, is that after truncating all small entries, one may asymptotically have multiple nonzero blocks in a given row or column (see Subsection \ref{sec:second trunc}). This issue will be taken care of by introducing a {second truncation level later on.}

{We will} show that $\| \hA^{< \vep_n } \| \cip 0$. To this end, for any
$\delta >0$, Markov's inequality gives 
\begin{align}\label{eq:frobenius2}
\PP\left( \left\| \hA^{<\vep_n } \right\| > \delta\right) 
\leq  \delta^{-2s_n}
\EE \left( \mathrm{Tr} \hspace{1pt} \left[\left(\hA^{<\vep_n }\right)^{2s_n}   \right]\right)\,,
\end{align}
where, for later purposes (c.f. Proposition \ref{lem:aboundofallevenpaths}), we assume that  the integer sequence $s_n>C\log n$ for some appropriate $C>0$, and  that $s_n$ is slowly varying\footnote{It is possible to allow $s_n$ to be some small power of $n$ (modulo the integer restriction), however, for convenience we assume it is slowly varying.} in $n$.
	Most of the rest of this section is devoted to proving Proposition \ref{lem:aboundofallevenpaths} from which we deduce that the right-hand side of \eqref{eq:frobenius2} goes to 0 as $n\to\ff$  for any $\delta>0$. Before we prove this result, we will prove some preliminary lemmas.

We write $\h X^{<}_{ij} := \h X_{ij}  \1{|X_{ij}| < n^{\beta}  }, 
\h X^{>}_{ij}:=\h X_{ij}-\h X^{<}_{ij}$,
so that in particular 
	\begin{align}\label{A trunc def}
	\h A^{<\vep_n }_{ij}=b_n^{-1}\, \h X^{<}_{ij}, \ \h A^{>\vep_n }_{ij}=b_n^{-1}\, \h X^{>}_{ij}\,,
\end{align}
where we recall $\vep_n=\frac{n^{\beta}}{b_n}$. 

By Lemma 13 in \cite{auffinger:arous:peche:2009}, we have for $n$ large enough that $|\E[\h A^{<\vep_n }_{ij}]|\le b_n^{-1} \widetilde{L}(n^{\beta}) n^{\beta (1-\alpha)}$, where $\widetilde{L}$ is the slowly varying function in $\eqref{regvar}$. Since $\|\E[ \hA^{<\vep_n }]\| =n |\E[\h A^{<\vep_n }_{ij}]| \to 0$ by \eqref{eq:betabounds} and the Potter bounds for slowly varying functions, we may assume without loss of generality that $\E[\h A^{<\vep_n }_{ij}]=-\E[\h A^{>\vep_n }_{ij}]=0$; compare also with equation (37) in \cite{auffinger:arous:peche:2009}.

We will show that the spectral norm of $\hA^{<\vep_n }=\frac{\h \bsX^{<}}{b_n}$ is bounded in probability by using \eqref{eq:frobenius2} and estimating $\EE ( \mathrm{Tr} \hspace{1pt} ( \h \bsX^{<})^{2s_n} )$ using a modification of the moment method described in \cite{auffinger:arous:peche:2009}. 

We begin with a standard calculation for the trace of some even power of a matrix. 
Let ${\mathfrak P}{={\mathfrak P}^{(n)}}$ be the set of ordered $(2s_n+1)$-tuples $(i_0, i_1,\dots, i_{2s_n-1},i_0)$ with the same first and last coordinates and $1\le i_j\le n$. Thus, ${\mathfrak P}$  can be viewed as the set of closed paths 
$${\cal P}=[i_0 \rightarrow i_1 \rightarrow i_2 \rightarrow \dots i_{2s_n-1} \rightarrow i_0]$$ 
 of length $2s_n+1$, in the set $\{1,2,3, \dots , n\}$.
To simplify notation, we define
$$\h \bsX^{<}({\mathcal P}):=\h X^{<}_{i_0i_1}\h X^{<}_{i_1i_2}\h X^{<}_{i_2i_3} \cdots  \h X^{<}_{i_{2s_n-2}i_{2s_n-1}}\h X^{<}_{i_{2s_n-1}i_0},$$
 so that       
\begin{align}\label{eq:traceandclosedpath0}
\EE \left( \mathrm{Tr} \hspace{1pt} \left( \h \bsX^{<}\right)^{2s_n}   \right)
&=\sum_{\mathfrak P} \EE\left( \h \bsX^{<}({\mathcal P})\right) .
\end{align}

\subsubsection{Spectral norm of $\hA^{<\vep_{n}}$: a review of the i.i.d. case}

It will be useful to first review the argument in \cite{auffinger:arous:peche:2009}, for the case where the matrix entries are i.i.d. in the upper triangle, and $2\le \alpha<4$. They first consider the contribution to \eqref{eq:traceandclosedpath0} of all closed {\it even} paths ({\it even} means that every ``edge'' $(i_k,i_{k+1})$ in the path appears an even number of times).

Using terminology which goes back to \cite{soshnikov1999universality}, for each even path 
$${\cal P}=[i_0 \rightarrow i_1 \rightarrow i_2 \rightarrow \dots i_{2s_n-1} \rightarrow i_0],$$
an instant ${t}\in \{1,2, \dots 2s_n-1\}$,  and corresponding vertex $i_t$, is said to be {\it marked} if the nonoriented edge $\{i_{{t}-1} , i_{t}\}$ where ${t}\in \{1,2, \dots 2s_n-1, 2s_n=0\}$ occurs an odd number of times up to (and including) instant ${t}$, otherwise it is said to be {\it unmarked}. Ignoring $t=0$ (which is assumed to be an unmarked instant), 
 it follows that the number of marked instants equals the number of unmarked instants. 
 For a given path $\cal P$, denote by $\cN_k$, the subset of $\{1,\dots ,n\}$ occurring $k$ times as a marked vertex where $0 \leq k \leq s_n$ and set $n_k:=|\cN_k|$. Any vertex in $\cN_k$ is said to have $k$ {\it self-intersections}. We say that $(n_0,n_1, \dots ,n_{s_n})$ is the {\it type} of path ${\cal P}$. From the definition of $n_k$, we get
\begin{equation}\label{nk sum}
\sum_{k=0}^{s_n} n_k=n \hspace{0.5cm} \text{and} \hspace{0.5cm} \sum_{k=0}^{s_n} kn_k=s_n.
\end{equation}
For given ${\cal P}$ define $\ell(ij)$ as the number of times the nonoriented edge $\{i,j\}$ appears in the path.
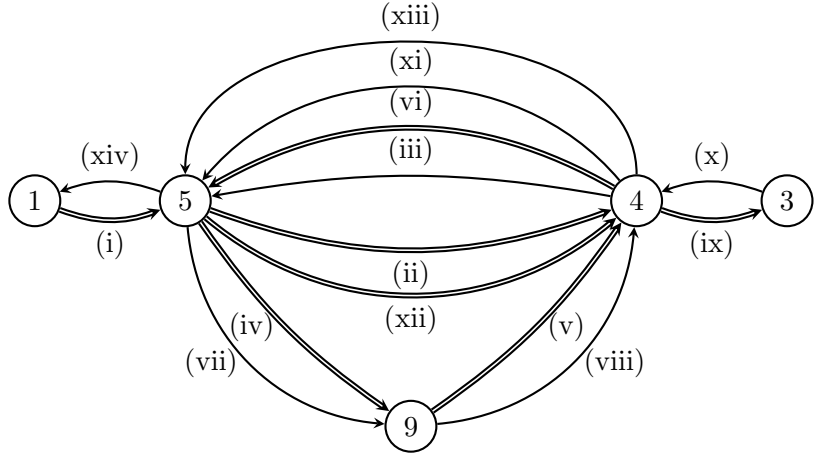
\begin{figure}[ht!]
	\centering
	\begin{tikzpicture}[>={stealth[black]}]
	\node[circle,thick,draw] (1) at (0,0) {1};
	\node[circle,thick,draw] (5) at (2,0) {5};
	\node[circle,thick,draw] (4) at (8,0) {4};
	\node[circle,thick,draw] (9) at (5,-3) {9};
	\node[circle,thick,draw] (3) at (10,0) {3};
	\path [->] (1) edge [bend right=20,thick,double] node[below] {(i)} (5);
	\path [->] (5) edge [bend right=20,thick,double] node[below] {(ii)} (4);
	\path [->] (4) edge [bend right=10,thick] node[above] {(iii)} (5);
	\path [->] (5) edge [thick,bend right=10,double] node[left] {(iv)} (9); 
	\path [->] (9) edge [bend right=10,thick,double] node[right] {(v)} (4);
	\path [->] (4) edge [bend right=30,thick,double] node[above] {(vi)} (5);
	\path [->] (5) edge [bend right=40,thick] node[left] {(vii)} (9); 
	\path [->] (9) edge [bend right=40,thick] node[right] {(viii)} (4);
	\path [->] (4) edge [bend right=20,thick,double] node[below] {(ix)} (3);
	\path [->] (3) edge [bend right=20,thick] node[above] {(x)} (4);
	\path [->] (4) edge [bend right=50,thick] node[above] {(xi)} (5);
	\path [->] (5) edge [bend right=40,thick,double] node[below] {(xii)} (4);
	\path [->] (4) edge [bend right=90,thick] node[above] {(xiii)} (5);
	\path [->] (5) edge [bend right=20,thick] node[above] {(xiv)} (1);	
	\end{tikzpicture}
	\caption{the target of $\Rightarrow$ becomes a marked vertex}
	\label{figure}
\end{figure}
\begin{example} \label{exampleofapath}
	Consider the following path ${\cal P}$ with $n=30$ and $2s_n=14$, and with $u,m$ denoting unmarked/marked instances (see Figure \ref{figure}):
	$$ {\cal P}=[1_u \rightarrow 5_m \rightarrow 4_m \rightarrow 5_u \rightarrow 9_m \rightarrow 4_m \rightarrow 5_m \rightarrow 9_u \rightarrow 4_u \rightarrow 3_m \rightarrow 4_u \rightarrow 5_u \rightarrow 4_m \rightarrow 5_u \rightarrow 1_u].$$
	Then,  $\cN_0=\{1,2,6,7,8,10, \dots ,30 \}, \cN_1=\{3,9\}, \cN_2=\{5\}, \cN_3=\{4\}$ and all other $\cN_k$'s are empty. Thus, the type of this path is  $(n_0,n_1, \dots ,n_{s_n})=(26,2,1,1,0, \dots ,0).$ Also, $\ell(4\  5)=6$. 
\end{example}

\noindent {\bf Notation:} {\it  For the rest of this proof, $L(n)$ denotes a generic slowly varying function of $n$, which may change from line to line.} 

In \cite[Lemma 15]{auffinger:arous:peche:2009}, the following moment bound for even paths was proved.

\begin{lemma}\label{lem:singletypepathbound}
Let $2\le\alpha<4$.	If $\{\h X_{ij}, i\le j\}$ are i.i.d. then
	 for an even path ${\cal P}$ of type $(n_0,n_1,\dots,n_{s_n})$, we have
	\begin{align}
	\EE\(\h \bsX^{<}({\cal P})\) \leq L^{n_1}(n^\beta) \prod_{k=2}^{s_n}  \big( L^k(n^\beta) n^{\beta (2k-(\alpha/2-1))}\big)^{n_k}.
	\end{align}
\end{lemma}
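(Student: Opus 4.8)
\medskip
\noindent\emph{Sketch of the argument (this is Lemma~15 of \cite{auffinger:arous:peche:2009}; here is the strategy I would follow).}

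\emph{Step 1 (factorise).} Since $\cal P$ is an even path, group the $2s_n$ factors of $\h\bsX^{<}({\cal P})$ according to the distinct nonoriented edges they involve, writing $\ell(ij)\ge 2$ for the (even) number of times $\{i,j\}$ is traversed, so that $\h\bsX^{<}({\cal P})=\prod_{\{i,j\}\in E({\cal P})}(\h X_{ij}^{<})^{\ell(ij)}$, where $E({\cal P})$ is the set of distinct edges. Because $\{\h X_{ij}:i\le j\}$ are i.i.d.\ (and the symmetrization identifies an oriented edge with its reverse), independence gives
\[
\EE\big(\h\bsX^{<}({\cal P})\big)=\prod_{\{i,j\}\in E({\cal P})}\EE\big[|X|^{\ell(ij)}\,\1{|X|<n^{\beta}}\big]\,,
\]
the possible diagonal steps $i_s=i_{s+1}$ (self-loops) being treated analogously.

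\emph{Step 2 (per-edge bounds via Karamata).} Estimate each factor with Karamata's theorem for truncated moments of a regularly varying variable. For $\ell(ij)=2$: if $\alpha>2$ then $\EE[X^{2}\1{|X|<n^{\beta}}]\le\EE[X^{2}]<\infty$, and if $\alpha=2$ the truncated second moment is slowly varying in $n^{\beta}$; either way the factor is at most $L(n^{\beta})$. For $\ell(ij)=2p\ge 4$ we have $2p>\alpha$ (since $\alpha<4$), so Karamata's theorem — in a form uniform over exponents that grow at most slowly in $n$, which suffices as $\ell(ij)\le 2s_n$ with $s_n$ slowly varying — gives $\EE[|X|^{2p}\1{|X|<n^{\beta}}]\le L(n^{\beta})\,n^{\beta(2p-\alpha)}$. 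Hence, separating the multiplicity-$2$ edges from the rest and writing $p_{ij}:=\ell(ij)/2$,
\[
\EE\big(\h\bsX^{<}({\cal P})\big)\le L(n^{\beta})^{|E({\cal P})|}\prod_{\{i,j\}:\,\ell(ij)\ge 4}n^{\beta(2p_{ij}-\alpha)}\,.
\]

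\emph{Step 3 (resum over the type).} It remains to convert this into the product over vertex self-intersection counts; this is the combinatorial core, following Soshnikov's bookkeeping for even closed walks \cite{soshnikov1999universality} as used in \cite{auffinger:arous:peche:2009}. The ingredients are the identities \eqref{nk sum} together with the structural fact that every odd-numbered traversal of an edge marks one of its two endpoints, so a high-multiplicity edge forces self-intersections at its endpoints. One charges the factor $n^{\beta(2p_{ij}-\alpha)}$ of each multiply-traversed edge to the self-intersecting vertices ``responsible'' for it, splitting each edge's moment-deficit across its two endpoints via $2p_{ij}-\alpha=(p_{ij}-\alpha/2)+(p_{ij}-\alpha/2)$ — this splitting is what ultimately produces the exponent $2k-(\alpha/2-1)$ — and arranges matters so that a vertex with $k\ge 2$ self-intersections absorbs a factor at most $L^{k}(n^{\beta})\,n^{\beta(2k-(\alpha/2-1))}$, while each singly-marked vertex and the root $i_0$ contributes only $L(n^{\beta})$. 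Multiplying over all vertices and using $n_1+\sum_{k\ge2}kn_k=s_n$ for the exponent of $L(n^{\beta})$ yields the claim. The bound is far from tight: e.g.\ for a single edge walked $2p$ times it reads $L(n^{\beta})n^{\beta(2p-\alpha)}\le L^{p}(n^{\beta})n^{\beta(2p-(\alpha/2-1))}$, which holds with room to spare.

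\emph{Main obstacle.} Steps 1 and 2 are routine — independence and a standard Karamata estimate with a mild uniformity check. The real work is Step 3: one must exhibit, for \emph{every} even closed walk of a given type, a consistent assignment of the multiply-traversed edges (and of their moment-deficits) to self-intersecting vertices respecting a per-vertex budget. This requires an induction along the walk that tracks which edges are currently ``open'' (traversed an odd number of times so far) and uses that a self-intersection either closes an open edge or opens a new cycle. I expect this to be the only delicate point.
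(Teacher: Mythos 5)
The paper does not actually prove this lemma: it is quoted verbatim from \cite[Lemma 15]{auffinger:arous:peche:2009}, with Remark~\ref{rem:1} recording the only adjustment made here (at $\alpha=2$ the truncated second moment is merely slowly varying rather than bounded, so the factor $\sigma^{2s_n}$ of that paper becomes $L^{n_1}(n^\beta)$). Your Steps 1--2 --- factorising over distinct nonoriented edges by independence and estimating each factor by Karamata's theorem, with the (correct) caveat that one needs uniformity over exponents $\ell(ij)\le 2s_n$, harmless since $s_n$ is slowly varying --- are right and are what that proof uses; your diagnosis of the $\alpha=2$ issue is precisely the content of Remark~\ref{rem:1}.

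As a self-contained argument, however, your Step 3 is a genuine gap rather than a proof. You assert, but do not verify, that the edge-moment deficits can be redistributed to self-intersecting vertices within a per-vertex budget of $L^k(n^\beta)\,n^{\beta(2k-(\alpha/2-1))}$, and the specific scheme you propose --- splitting $2p_{ij}-\alpha$ symmetrically as $(p_{ij}-\alpha/2)+(p_{ij}-\alpha/2)$ between the two endpoints --- does not respect that budget in general. The $p_{ij}$ odd-numbered (marked) traversals of an edge may be distributed very unevenly between its endpoints: a vertex $v\in\mathcal{N}_k$ can be incident to an edge with $p_{ij}$ up to $k+k'$, the other endpoint lying in $\mathcal{N}_{k'}$, and as soon as $k'>k+1$ the half-charge $n^{\beta(p_{ij}-\alpha/2)}$ already exceeds $n^{\beta(2k-\alpha/2+1)}$. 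The workable bookkeeping (Soshnikov's, as used in \cite{auffinger:arous:peche:2009}) charges each marked traversal to the vertex it \emph{marks}, so that a vertex in $\mathcal{N}_k$ answers for at most $k$ odd traversals and hence for a controlled amount of excess edge multiplicity; exhibiting this assignment for an arbitrary even closed walk is exactly the part you defer. So either carry out that induction along the walk explicitly, or do what the paper does and cite \cite[Lemma 15]{auffinger:arous:peche:2009}, stating only the $L^{n_1}(n^\beta)$ modification.
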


\begin{remark} \label{rem:1}
In \cite{auffinger:arous:peche:2009}, they assume $\alpha>2$ and their $\sigma^{2s_n}$ factor corresponds to the $k=1$ or $n_1$ component of the bound. More precisely,  $\sigma^{2s_n}$ is the truncated second-moment portion of the entries. In our setting we also include the case $\alpha=2$, where the truncated second moment is slowly varying but might tend to infinity. Thus the $\sigma^{2s_n}$ factor is replaced with a slowly varying function to the power $n_1$.
\end{remark}

In order to use the above lemma, we normalize the matrix in \eqref{eq:traceandclosedpath0} by $\frac{1}{n^{2/\alpha-\vep}}$ to get
\begin{align}\label{eq:traceandclosedpath}
\EE \left( \mathrm{Tr} \hspace{1pt} \left(\frac{1}{n^{2/\alpha-\vep}} \h \bsX^{<}\right)^{2s_n}   \right)
&=\sum_{\mathfrak P} \EE\left(\frac{1}{n^{2s_n(2/\alpha-\vep)}} \h \bsX^{<}({\mathcal P})\right) ,
\end{align} 
where
\begin{align}\label{eq: eps choice}
\vep<\min\left\{\frac{2}{\alpha}-\frac1 2,\, \frac{2}{\alpha}-\beta, \, \frac{1}{4}\left(\frac{8}{\alpha}-1-\beta(5-\frac{\alpha}{2})\right)\right\}.
\end{align}
We need $\vep< \frac{2}{\alpha}-\frac12$ to show that \eqref{contributionofallsimplepath}  goes to 0, while the other two bounds on $\vep$ are required for  \eqref{eq:2^2sn} which in turn is essential in showing that that $\| \hA^{<\vep_n } \|\cip 0$.

In the case where $\{\h X_{ij}, i\le j\}$ are i.i.d., \cite{auffinger:arous:peche:2009} goes on to show  that when $Z_e=Z_e(\vec{n})$ is the contribution of all even paths of type $\vec{n}=(n_0,n_1,\dots,n_{s_n})$  to \eqref{eq:traceandclosedpath},
\begin{align}\label{eq:ze bound} 
Z_e(\vec{n}) \leq \frac{(2s_n)!}{s_n! (s_n+1)!}n (2e)^{s_n} \prod_{k=2}^{s_n} \frac{1}{n_k!}\left[\frac{L^k(n^\beta) s_n^k n^{\beta(2k-(\alpha/2-1))}}{n^{4k/\alpha - 2k\vep -1}}\right]^{n_k} \frac{L^{n_1}(n^\beta)}{n^{n_1(4/\alpha-2\vep -1)}}.
\end{align}
\begin{remark} \label{rem:2} 
	In addition to the modification discussed in \Cref{rem:1}, the bound in \eqref{eq:ze bound} slightly differs from the analogous bound in \cite{auffinger:arous:peche:2009} by a factor of $(2e)^{s_n}$ which (due to its insignificance) seems to have been dropped from the line above (42) to (42) in \cite{auffinger:arous:peche:2009}.
\end{remark}

A path is {\it simple} if it has a type of form $(n_0,n_1,0, \dots , 0)=(n-s_n,s_n,0, \dots , 0)$ and is {\it intersecting} if $\sum\limits_{k=2}^{s_n} n_k>0$. In \cite{auffinger:arous:peche:2009}, the notation  $$Z_{e,s}:=Z_e(n-s_n,s_n,0, \dots , 0)$$ is used for the contribution to \eqref{eq:traceandclosedpath} of all simple even paths. For this, \cite{auffinger:arous:peche:2009} obtains the bound
\begin{equation}\label{contributionofallsimplepath}
Z_{e,s} \leq \frac{(2s_n)!}{s_n! (s_n+1)!}n \frac{L^{s_n}(n^\beta)}{n^{{s_n}(4/\alpha-2\vep -1)}}
\end{equation}
and argues that this goes to 0 as $n\to\infty$ if {$\vep<\frac2{\alpha}-\frac12$.}
For the contribution of all intersecting even paths,  denoted by $$Z_{e,i}:=\sum_{\vec{n}:\sum\limits_{k\ge 2} n_k>0} Z_e(\vec{n})\, ,$$ one obtains
\begin{equation}\label{eq:auffingercontributionofallnonsimple}
Z_{e,i} \leq o(1) \frac{(2s_n)!}{s_n! (s_n+1)!}n (2\sqrt{2e})^{2s_n} .
\end{equation} 
\begin{remark}
	In addition to the factor $(\sqrt{2e})^{2s_n}$ discussed in \Cref{rem:2},
	we have included an extra $2^{2s_n}$ in the bound of $Z_{e,i}$ when compared to \cite{auffinger:arous:peche:2009}. The explanation for this additional extra factor is given below \eqref{boundofinsideofprod}.
\end{remark}

\subsubsection{Spectral norm of $\hA^{<\vep_{n}}$: the $m$-dependent case}
To extend the above technique to the {\it m}-dependent case, we will map each path ${\cal P}$ to a new path ${\psi(\cal P)}$ which we now describe.
For each path 
$${\cal P}=[i_0 \rightarrow i_1 \rightarrow i_2 \rightarrow \dots i_{2s_n-1} \rightarrow i_0],$$ 
we form { equivalence classes} (depending on the path) of vertices in the path as follows. First break the path into ordered pairs
$\{(i_0,i_1), (i_1,i_2),\ldots, (i_{2s_n-1},i_{0})\}$ and extend this set of ordered pairs to a set $\bar {\cal P}$ which includes all reflections, i.e., if $(x,y)$ is in this set then we put 
\begin{align}\label{def: bar P}
(x,y)\in \bar {\cal P}\quad\text{and}\quad (y,x)\in\bar{\cal P}.
\end{align}
 For $(x,y),(u,v)\in \bar {\cal P}$, if $|(x,y),(u,v)|:=\max(|x-u|,|y-v|)\le m$, then we put $x$ and $u$ into the same class and $y$ and $v$ into the same class.  After extending this by transitivity, we get an equivalence relation between vertices. 
 Consequently, if $(x,u),(y,v),(z,w)$ are elements of $\bar {\cal P}$ with $x,y,z$ all being in the same class, then $x,y,z$ are not necessarily within pairwise distance $m$ of each other. Indeed, it may be the case that $|x-y|\le m$ and $|y-z|\le m$ but that $|x-z|>m$. However, by transitivity, if $x$ and $z$ are in the same class, then there is some chain of vertices $y_1,\ldots,y_k$ in that same class such that 
	\begin{align}\label{chain}
	|x-y_1|\le m,\ |y_1-y_2|\le m,\ \ldots,\ |y_k-z|\le m. 
	\end{align}

Now, let all the vertices in a given class be represented by the minimum vertex of that class. Denote by $i_{k}^{\prime}$ the new value of vertex $i_{k}$ according to this procedure. We also denote the class of vertices containing $i_k^{\prime}$ by $G_{i_k^{\prime}}=G_{i_k^{\prime}}({\cal P})$. 
Let $\psi$ map a path ${\cal P}$ to the new path of the form
 $${\psi(\cal P)}=\mathcal{P}'=[i_{0}^{\prime} \rightarrow i_{1}^{\prime} \rightarrow i_{2}^{\prime} \rightarrow \dots i_{2s_n-1}^{\prime} \rightarrow i_{0}^{\prime}].$$ 
We define $\cN_k'$ and $n_k'$ for the path $\psi(\cal{P})$ analogously to the quantities $\cN_k$ and $n_k$ for the path $\cal{P}$.
Furthermore, we write $\ell^{\prime}(i^{\prime}j^{\prime})$ for the number of appearances of the nonoriented edge $\{i^{\prime},j^{\prime}\}$ in $ \psi({\cal P})$.

By construction, the vertices in $ \psi({\cal P})$ are separated by at least $m+1$. Recalling that $\bsX^{\infty}$ is $m$-dependent, we see that
$$\EE[\h \bsX^{<}({\mathcal \psi({\cal P})})] = \prod_{\{i^{\prime},j^{\prime}\}: \{i',j'\}\in \psi({\cal P})} \EE  \left[  (\h X^{<}_{i^{\prime}j^{\prime}})^{\ell^{\prime}(i^{\prime}j^{\prime})} \right]\,.$$

\begin{example} \label{exampleofprimepath}
	We recall from Example \ref{exampleofapath} 	
		$$ {\cal P}=[1_u \rightarrow 5_m \rightarrow 4_m \rightarrow 5_u \rightarrow 9_m \rightarrow 4_m \rightarrow 5_m \rightarrow 9_u \rightarrow 4_u \rightarrow 3_m \rightarrow 4_u \rightarrow 5_u \rightarrow 4_m \rightarrow 5_u \rightarrow 1_u].$$
	Now let us assume that $m=1$, i.e., that there is $1$-dependence. Then $3,4,5$ all belong to the same class since, for instance, $|(3,4),(4,5)|\le 1$.
	We get the relations $$1'=1, \ 3'=3, \ 4'=3, \ 5'=3, \ 9'=9,$$ and the classes $G_{1}=\{1\}, G_{3}=\{3,4,5\}, $ and $ G_{9}=\{9\}$. From these new vertices under the map $\psi$, we have 
	\begin{align*}
	&{\psi(\cal P)}= [ 1_u \rightarrow 3_m \rightarrow 3_m \rightarrow 3_u \rightarrow 9_m \rightarrow 3_u \rightarrow 3_u \rightarrow 9_m \\
	&\rightarrow 3_u \rightarrow 3_u \rightarrow 3_m \rightarrow 3_u \rightarrow 3_m \rightarrow 3_u\rightarrow 1_u].
	\end{align*}
Then have {$\cN_0'=\{1, 2, 4 \dots, 8, 10, \dots, 30 \}, \cN_2'=\{ 9 \}, \cN_3
	'=\{3\} $} with all other $\cN_k'$'s empty.  We also see that, for instance, $\ell'(3\,9)=4$.
\end{example}


\begin{lemma}\label{lem:newpathinq}
	Assume $\{\h X^{<}_{ij}, i\le j\}$ are {\it m}-dependent. For all ${\cal P}$ we have 
\begin{equation*}
 \EE| \h \bsX^{<}({\mathcal P})|  \leq \EE|\h \bsX^{<}({\mathcal \psi({\cal P})})|\,.
\end{equation*}

	\begin{proof}	
		 Let  $\ell(ij)$ be the number of appearances of the nonoriented edge $\{i,j\}$ in the path ${\cal P}$ and $\ell^{\prime}(i^{\prime}j^{\prime})$ the number of appearances of the nonoriented edge $\{i^{\prime},j^{\prime}\}$ in $ \psi({\cal P})$. From the definition, we have $\ell^{\prime}(i^{\prime}j^{\prime})=\sum_{ i\in G_{i^{\prime}} \text{ and } j\in G_{j^{\prime}} } \ell(ij)$. 
		Then, for each fixed $\{i',j'\}$ in $\psi({\cal P})$, the general H\"older inequality along with stationarity gives	
		 \begin{align*}
		 \EE\left(\prod_{{\{i,j\}:i\in G_{i^{\prime}}} \text{ and } j\in G_{j^{\prime}} } \left|\left(\h X_{ij}^{<}\right)^{\ell(ij)}\right|\right) 
		 &\leq \prod_{{\{i,j\}:i\in G_{i^{\prime}} \text{ and } j\in G_{j^{\prime}} }} \left( \EE \left|\left(\h X_{ij}^{<}\right)^{\ell(ij)}\right|^{\frac{\ell'(i',j')}{\ell(i,j)}} \right)^{\frac{\ell(i,j)}{\ell'(i',j')}} \\ \\
		 &= \EE \left|\h X_{i'j'}^{<}\right|^{\ell'(i'j')}.
		 \end{align*}
		 Taking the product over all $\{i',j'\}\in \psi({\cal P})$, we get
		 	\begin{equation*}
		  \EE \left(\left|\prod_{\{i,j\}:\{i,j\}\in {\cal P}} \left(  \h X^{<}_{ij}\right)^{\ell(i j)}\right|\right) \leq   
		 	   \prod_{\{i^{\prime},j^{\prime}\}: \{i',j'\}\in \psi({\cal P})} \EE  \left(\left|  \h X^{<}_{i^{\prime}j^{\prime}}\right|\right)^{\ell^{\prime}(i^{\prime}j^{\prime})} 
		 	\end{equation*}
		 	which is precisely what we wanted to show.
 	\end{proof}   	
\end{lemma}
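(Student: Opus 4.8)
The plan is to rewrite both sides as products over nonoriented edges, group the edges of ${\cal P}$ according to which edge of $\psi({\cal P})$ they collapse onto, factorize the expectation over those groups using $m$-dependence, and then dominate each group's contribution by a single factor via the generalized H\"older inequality and stationarity. Writing $\ell(ij)$ (resp.\ $\ell'(i'j')$) for the multiplicity of the nonoriented edge $\{i,j\}$ in ${\cal P}$ (resp.\ $\{i',j'\}$ in $\psi({\cal P})$), one has the deterministic identities $|\h\bsX^{<}({\cal P})| = \prod_{\{i,j\}\in{\cal P}}|\h X^{<}_{ij}|^{\ell(ij)}$ and $|\h\bsX^{<}(\psi({\cal P}))| = \prod_{\{i',j'\}\in\psi({\cal P})}|\h X^{<}_{i'j'}|^{\ell'(i'j')}$, and by the construction of $\psi$ every edge $\{i,j\}$ of ${\cal P}$ with $i\in G_{i'}$ and $j\in G_{j'}$ collapses onto $\{i',j'\}$, with $\ell'(i'j')=\sum_{\{i,j\}\mapsto\{i',j'\}}\ell(ij)$.

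Next I would check that entries sitting over edges which collapse onto \emph{distinct} edges of $\psi({\cal P})$ are independent: if $\{i,j\}$ and $\{k,\ell\}$ are not identified by the equivalence relation, then by the merging rule together with the reflection closure \eqref{def: bar P} neither $\big((i,j),(k,\ell)\big)$ nor $\big((i,j),(\ell,k)\big)$ can be within distance $m$, so the corresponding entries of the reflected field are independent by $m$-dependence. Consequently $\EE|\h\bsX^{<}({\cal P})|$ factorizes over the edges $\{i',j'\}$ of $\psi({\cal P})$ as $\prod_{\{i',j'\}}\EE\big(\prod_{\{i,j\}\mapsto\{i',j'\}}|\h X^{<}_{ij}|^{\ell(ij)}\big)$, while the same independence gives $\EE|\h\bsX^{<}(\psi({\cal P}))|=\prod_{\{i',j'\}}\EE|\h X^{<}_{i'j'}|^{\ell'(i'j')}$, the absolute-value counterpart of the factorization recorded just above the lemma.

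It then remains to prove, for each fixed edge $\{i',j'\}$ of $\psi({\cal P})$, the single-group bound $\EE\big(\prod_{\{i,j\}\mapsto\{i',j'\}}|\h X^{<}_{ij}|^{\ell(ij)}\big)\le\EE|\h X^{<}_{i'j'}|^{\ell'(i'j')}$. Here I would apply the generalized H\"older inequality with exponents $p_{ij}=\ell'(i'j')/\ell(ij)\ge 1$; these are reciprocal-summable to $1$ precisely because $\sum_{\{i,j\}\mapsto\{i',j'\}}\ell(ij)=\ell'(i'j')$, so the left-hand side is at most $\prod_{\{i,j\}}\big(\EE|\h X^{<}_{ij}|^{\ell'(i'j')}\big)^{\ell(ij)/\ell'(i'j')}$. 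Since $\h X_{ij}=\h X_{ji}$ and $\bsX^{\infty}$ is stationary, each base equals $\EE|\h X^{<}_{i'j'}|^{\ell'(i'j')}$, and recombining the exponents (which again sum to $1$) gives the single-group bound. Multiplying over all edges $\{i',j'\}$ of $\psi({\cal P})$ and invoking the two factorizations above then yields $\EE|\h\bsX^{<}({\cal P})|\le\EE|\h\bsX^{<}(\psi({\cal P}))|$.

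I expect the only genuinely delicate point to be the independence-across-groups step, i.e.\ checking that the equivalence relation built from $\bar{\cal P}$ --- which is exactly why the reflection closure \eqref{def: bar P} is imposed --- is strong enough to guarantee that two entries lying over distinct edges of $\psi({\cal P})$ are separated by more than $m$ in the (symmetric) sense relevant for the reflected field $\h\bsX^{\infty}$, so that $m$-dependence applies. Everything else is routine H\"older-and-stationarity bookkeeping, the one numerical fact needed being $\ell'(i'j')=\sum_{\{i,j\}\mapsto\{i',j'\}}\ell(ij)$, immediate from the construction of $\psi$.
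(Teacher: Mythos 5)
Your proposal is correct and follows essentially the same route as the paper's proof: group the edges of ${\cal P}$ by their image under $\psi$, apply the generalized H\"older inequality within each group with exponents $\ell'(i'j')/\ell(ij)$, use stationarity to identify each resulting factor with $\EE|\h X^{<}_{i'j'}|^{\ell'(i'j')}$, and multiply over the edges of $\psi({\cal P})$. The one point you flag as delicate --- that edges collapsing onto distinct edges of $\psi({\cal P})$ carry independent entries, thanks to the reflection closure of $\bar{\cal P}$ and $m$-dependence --- is in fact handled more explicitly in your write-up than in the paper, which leaves that factorization implicit in the final "taking the product" step.
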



Lemma \ref{lem:newpathinq} implies that the contribution of each path can be bounded by the contribution of its induced path $\mathcal{P}'$ under the map $\psi$. Moreover, for each fixed path $\mathcal{P}'$,  if we count the number of paths ${\cal P}$ satisfying $\psi(\mathcal{P})=\mathcal{P}'$ and denote this number by $\left|\psi^{-1}(\mathcal{P}')\right|$, Lemma \ref{lem:newpathinq} gives us that   
\begin{equation}\label{eq:dsaaa}
\sum_{{\mathcal P\in \mathfrak{ P} }} \EE\left(  \left| \h \bsX^{<}({\mathcal P})\right|\right) \leq  \sum_{{\mathcal P'\in \mathfrak{ P}_m }} \left|\psi^{-1}(\mathcal{P}')\right| \EE\left(  \left|\h \bsX^{<}({\mathcal P'})\right|\right)
\end{equation}
where $\mathfrak{ P}_m$ is the collection of paths ${\cal P'}$ such that any two ordered pairs  $(x,y),(u,v)\in \bar{\cal P'}$ (see \eqref{def: bar P}) have distance $|(x,y),(u,v)|>m$. Note that, under the map $\psi$, every path $\cal P\in\mathfrak{P}$ maps to some path in ${\cal P'} \in\mathfrak{ P}_m$. Moreover, if ${\cal P'}\in\mathfrak{ P}_m$, then any two distinct random variables in the product $\h \bsX^{<}({\mathcal P'})$ are independent. 

In order to get a bound on the right-hand side of \eqref{eq:dsaaa}, we consider random variables $\{Y_{ij}\}$ which satisfy 
\begin{align}\label{eq:Ydef}
	Y_{ij} \eid  X_{ij} \quad \text{ and } \quad \{Y_{ij}\}_{1\leq i,j\leq n} \text{ are independent.} 
\end{align}
Since $|(i_{k},i_{k+1})-(i_{l},i_{l+1})|> m$ for distinct $(i_{k},i_{k+1}), (i_{l},i_{l+1}) \in \mathcal{P}' $ with ${\cal P'} \in\mathfrak{ P}_m$, we have
\begin{align*}
\sum_{{\cal P'} \in\mathfrak{ P}_m} \EE\left(  \left|\h \bsX^{<}({\mathcal P'})\right|\right) 
&= \sum_{{\cal P} \in\mathfrak{ P}_m} \EE\left(  \left| \h \bsY^{<}({\mathcal P})\right|\right) 
\leq  \sum_{{\cal P} \in\mathfrak{ P}}\EE\left(  \left| \h \bsY^{<}({\mathcal P})\right|\right).
\end{align*}
The argument of \cite{auffinger:arous:peche:2009} in Eqs. (46)-(50) (which relies on the work of \cite{peche2007wigner}) implies that, when the matrix elements have mean zero,
\begin{align*}
\sum_{\mathfrak{P}}\EE\left(  \left| \h \bsY^{<}({\mathcal P})\right|\right)
&=\sum_{\mathfrak{P}_{\text{even}}} \EE\left(  \left| \h \bsY^{<}({\mathcal P})\right|\right) +\sum_{\mathfrak{P}_{\text{odd}}} \EE\left(  \left| \h \bsY^{<}({\mathcal P})\right|\right)\\
&=(1+o(1)) \,\sum_{\mathfrak{P}_{\text{even}}} \EE\(  \h \bsY^{<}({\mathcal P})\)\,,
\end{align*}
where $\mathfrak{ P_{\text{even}}}$ is the set of all even paths and $\mathfrak{ P_{\text{odd}}}$ is the set of all odd paths. For large enough $n$, we get the following bound
\begin{equation} \label{eq:m-dependentpathsum}
\sum_{\substack{\mathfrak{ P} }} \EE\left(  \h \bsX^{<}({\mathcal P})\right) \leq  2 \max_{\mathcal{P}' \in\mathfrak{ P}_m} \left|\psi^{-1}(\mathcal{P}')\right|     \sum_{\mathfrak{P}_{\text{even}}} \EE\left(  \h \bsY^{<}({\mathcal P})\right).
\end{equation}

We are left to estimate $\left|\psi^{-1}(\mathcal{P}')\right|$. Before proving an upper bound, we consider a simple example. 

\begin{example}
Recall Example \ref{exampleofprimepath} with $m=1$.  We have a path of length $14$ such that 
$$ {\cal P}'=[1\rightarrow 3\rightarrow 3\rightarrow 3\rightarrow 9\rightarrow 3\rightarrow 3 \rightarrow 9\rightarrow 3\rightarrow 3 \rightarrow 3 \rightarrow 3\rightarrow 3\rightarrow 3\rightarrow 1]. $$
Since the original path $\cal P$ is closed,  $G_1$ must be $\{1\}$. Next, consider the vertex $3$ in the path $\mathcal{P}'$. Since it appears $11$ times, $G_3$ has at most $11$ distinct elements which we may order from smallest to largest. Thus, the possible number of different sets $G_3$ associated with different ${\cal P}$'s such that $\psi(\cal P)=\cal P'$ is bounded by $(m+1)^{11-1}=2^{10}$. For example, it could be $G_3=\{3\}$ or all 11 elements might be distinct as in $G_3=\{3,4,5,6,7,8,9,10,11,12,13,14\}$ (ignoring for the time being that in fact $9\notin G_3$ since 9 appears in $\cal P'$). Finally, for $G_9$, we obtain at most $2$ different sets since 9 only appears twice; either $G_9=\{9\}$ or $G_9=\{9,10\}$.

For the purposes of a crude upper bound let us continue to ignore the fact that we must have $9\notin G_3$ and suppose that
\begin{align*}
G_1&=\{1\}\,,G_3=\{3,4,5,6,7,8,9,10,11,12,13,14\}\,,G_9=\{9,10\}\,.
\end{align*}
Then the possible number of $\mathcal{P}$'s associated to this choice of $G_1,G_3,G_9$ is bounded by $11!2!1!\le 14!$. Therefore, the number of different paths $\cal P$ which map to $\cal P'$ above 
 is at most $2^{10}\cdot 2 \cdot14!$. 
\end{example}

\begin{lemma}\label{lem:count}
		For any closed even path $\mathcal{P}' \in\mathfrak{ P}_m$ of length $2s_n$, there exist at most $(m+1)^{2s_n}(2s_n)!$ different paths $\mathcal{P}$ which map to $\mathcal{P}'$ under $\psi$, i.e., we have the bound
		$$\left|\psi^{-1}(\mathcal{P}')\right|\le (m+1)^{2s_n}(2s_n)!\,\,.$$
	\begin{proof}
		Let $\mathcal{P}'=[i_0\rightarrow i_1 \rightarrow \cdots \rightarrow i_{2s_n-1}\rightarrow i_0]$ be a closed path of length $2s_n$,
			and denote by $\#(i_k)$ the number of times that $i_k$ appears in $\mathcal{P}'$.
		
For a given $i_k$, the number of possible $G_{i_k}$ is bounded by \mbox{$(m+1)^{\#(i_k)}$.}
Let $\{j_1,\ldots,j_k\}$ denote the set of distinct vertices in $\cal P'$. 
Given a fixed instance of the sets $G_{j_1},\ldots,G_{j_k}$, the number of ways we may assign vertices in these sets to the various positions $1,\ldots,{2s_n-1}$ in $\mathcal{P}'$ is at most $\prod_{l=1}^k \#(j_l)!\,$. 
Thus we obtain the bound
$$\prod_{l=1}^k(m+1)^{\#(j_l)}\#(j_l)!\le (m+1)^{2s_n}(2s_n)!\,.$$
	\end{proof}
\end{lemma}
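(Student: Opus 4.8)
\emph{Proof proposal.} Fix a closed path $\mathcal{P}'=[j_0\to j_1\to\cdots\to j_{2s_n-1}\to j_0]\in\mathfrak{P}_m$ of length $2s_n$; if $\psi^{-1}(\mathcal{P}')$ is empty there is nothing to show. For a vertex $v$ appearing in $\mathcal{P}'$ let $\#(v)$ be its multiplicity, so that $\sum_v\#(v)=2s_n$, the sum ranging over the distinct vertices of $\mathcal{P}'$. The plan is to encode each preimage $\mathcal{P}\in\psi^{-1}(\mathcal{P}')$ by data attached separately to each distinct label, bound the number of possibilities for that data label by label, and multiply.

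First I would pin down the structure of the equivalence classes. If $\mathcal{P}=[i_0\to\cdots\to i_{2s_n-1}\to i_0]$ satisfies $\psi(\mathcal{P})=\mathcal{P}'$, then $j_k$ is the minimum of the equivalence class of $i_k$ under the relation defining $\psi$; write $G_v$ for the class with minimum $v$. The key geometric point is that $G_v$, viewed as a subset of $\ZZ$, is connected in the graph joining integers at distance $\le m$: every elementary identification in the construction of $\psi$ merges two vertices within distance $m$, and by transitivity any two members of $G_v$ are linked by a chain of such merges, which is precisely \eqref{chain}. Consequently, listing $G_v=\{g_1<\cdots<g_t\}$ in increasing order, one has $g_1=v$ and $g_{r+1}-g_r\le m$ for each $r$ — otherwise $G_v$ would split into two parts with no distance-$\le m$ edge between them — so there are at most $m^{t-1}$ possible sets $G_v$ with $t$ elements. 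Next, the positions of $\mathcal{P}'$ carrying label $v$ are exactly those $k$ with $i_k\in G_v$; grouping them by the value of $i_k$ yields a set partition $\pi_v$ of these $\#(v)$ positions into $t=|G_v|$ blocks, and reading off the common value on each block defines a bijection from the blocks of $\pi_v$ onto $G_v$. Since $\mathcal{P}$ is recovered from the family $\{(\pi_v,\text{bijection})\}_v$, this gives the over-count
\[
|\psi^{-1}(\mathcal{P}')|\ \le\ \prod_v\ \sum_{t=1}^{\#(v)}S(\#(v),t)\,m^{t-1}\,t!\,,
\]
where $S(a,t)$ is the Stirling number of the second kind counting partitions of an $a$-set into $t$ blocks.

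It then remains to estimate the inner sum. A short computation — starting from $\sum_{i\ge0}\binom{i+a}{a}q^i=(1-q)^{-a-1}$ with $q=m/(m+1)$ and using $(i+1)(i+2)\cdots(i+a)\ge i^a$ — gives $\sum_{t=1}^{a}S(a,t)\,m^{t-1}\,t!\le(m+1)^{a}\,a!$ for every $a\ge1$. Substituting this and using $\sum_v\#(v)=2s_n$ together with the elementary inequality that a product of factorials is bounded by the factorial of the sum,
\[
|\psi^{-1}(\mathcal{P}')|\ \le\ \prod_v(m+1)^{\#(v)}\#(v)!\ =\ (m+1)^{2s_n}\prod_v\#(v)!\ \le\ (m+1)^{2s_n}(2s_n)!\,,
\]
which is the assertion. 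The only genuinely delicate step is the structural claim that each class $G_v$ is connected in the distance-$\le m$ graph and anchored at its minimum $v$, which requires unwinding the definition of $\psi$ through the reflected pair-set $\bar{\mathcal{P}}$ and its transitive closure; everything afterwards is bookkeeping. I would also note that any bound of the form $C_m^{2s_n}(2s_n)!$ with $C_m$ depending only on $m$ would be equally usable in \eqref{eq:m-dependentpathsum}, so the precise constant $m+1$ is not essential.
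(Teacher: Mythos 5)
Your proof is correct, and it follows the same overall strategy as the paper: factor the count over the distinct labels $v$ of $\mathcal{P}'$, bound the number of admissible classes $G_v$ using the chain property \eqref{chain} (anchored at the minimum $v$, consecutive gaps at most $m$), bound the number of ways to distribute the elements of $G_v$ over the $\#(v)$ positions carrying label $v$, and multiply. The difference is in how the second factor is handled, and here your version is actually the more careful one. The paper bounds the two factors separately, claiming that for fixed sets $G_{j_1},\ldots,G_{j_k}$ the number of assignments is at most $\prod_l \#(j_l)!$; read literally this is not an upper bound on the number of surjections from the $\#(v)$ positions onto a class of size $t<\#(v)$ (e.g.\ there are $36$ surjections from a $4$-set onto a $3$-set, versus $4!=24$). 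You avoid this by keeping the sum over the class size $t$ intact, writing the count as $\sum_{t}S(\#(v),t)\,m^{t-1}\,t!$ and proving the combined estimate $\sum_{t=1}^{a}S(a,t)\,m^{t-1}\,t!\le (m+1)^{a}a!$ via the Fubini-polynomial identity; the extra factor $1/m$ you pick up along the way even gives a slightly stronger constant for $m\ge 1$. The final reduction $\prod_v \#(v)!\le (2s_n)!$ and the remark that any bound of the form $C_m^{2s_n}(2s_n)!$ suffices for \eqref{eq:m-dependentpathsum} are both correct. In short: same route, but your bookkeeping closes a small gap in the paper's own argument.
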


{ \begin{proposition}\label{lem:aboundofallevenpaths}
	Assume $\{\h X_{ij}, i\le j\}$ are {\it m}-dependent. Let $C>0$ be arbitrary and assume that the slowly varying sequence $(s_n)$ satisfies $s_n>C\log n$.
Then there exist positive constants $\gamma(C), \eta(C)$ such that, for any $\gamma>\gamma(C)$ and sufficiently large $n$, it holds
	$$\frac{\EE \left( \mathrm{Tr} \hspace{1pt} (\frac{1}{n^{2/\alpha-\vep}} \h \bsX^{<})^{2s_n}   \right)}{ ( (m+1) (8+\gamma)\sqrt{2e}   )^{2s_n}  } \leq \exp(-\eta(C) s_n)\,.$$
\end{proposition}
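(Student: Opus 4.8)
The plan is to estimate $\EE\big(\mathrm{Tr}\,(\tfrac1{n^{2/\alpha-\vep}}\h\bsX^{<})^{2s_n}\big)$ by pushing the $m$-dependent trace onto an independent one via the surgery $\psi$, inserting the moment bound \eqref{eq:ze bound}, and then observing that the combinatorial loss produced by $\psi$ is harmless because $(s_n)$ is slowly varying. Concretely, starting from \eqref{eq:traceandclosedpath} I would apply \eqref{eq:m-dependentpathsum} and then Lemma~\ref{lem:count} to get
\[
\EE\Big(\mathrm{Tr}\,\Big(\tfrac1{n^{2/\alpha-\vep}}\h\bsX^{<}\Big)^{2s_n}\Big)\;\le\;2\,(m+1)^{2s_n}(2s_n)!\,\big(Z_{e,s}+Z_{e,i}\big),
\]
where $Z_{e,s},Z_{e,i}$ are the simple- and intersecting-even-path contributions to \eqref{eq:traceandclosedpath} for the independent field $(Y_{ij})$ of \eqref{eq:Ydef}; note $Z_{e,s}+Z_{e,i}=\sum_{\vec n}Z_e(\vec n)$, the sum over all even types.

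\textbf{The even-path sum is $n^{-c_* s_n}$-small.} Set $a:=4/\alpha-2\vep-2\beta$, $b:=1+\beta(1-\alpha/2)$, $c_0:=4/\alpha-2\vep-1$. The windows \eqref{eq:betabounds} and \eqref{eq: eps choice} give $a>0$, $c_0>0$, $b>0$ and $2a-b>0$, so $c_*:=\min\{c_0,\,a-b/2\}>0$. Plugging \eqref{eq:ze bound} into $Z_e(\vec n)$ and using $\sum_k k n_k=s_n$, $M:=\sum_{k\ge2}n_k\le\tfrac12(s_n-n_1)$ and $n^{bM}\le n^{b(s_n-n_1)/2}$ (valid since $b>0$), I would collect the exponents of $L(n^\beta)$, $s_n$ and $n$ to obtain, uniformly over every type $\vec n$,
\[
Z_e(\vec n)\;\le\;\frac{(2s_n)!}{s_n!\,(s_n+1)!}\,n\,(2e)^{s_n}\,L^{s_n}(n^{\beta})\,\big(s_n\,n^{-c_*}\big)^{s_n}\,\prod_{k\ge2}\frac1{n_k!},
\]
the point being that $n^{-a(s_n-n_1)+bM-c_0n_1}\le n^{-c_*(s_n-n_1)-c_*n_1}=n^{-c_*s_n}$ and $s_n^{\,s_n-n_1}\le s_n^{\,s_n}$. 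Summing over types with $\sum_{\vec n}\prod_{k\ge2}\tfrac1{n_k!}\le e^{s_n}$, bounding the Catalan factor by $4^{s_n}$, and noting that $n$, $L^{s_n}(n^{\beta})$ (Potter's bounds), $(2e)^{s_n}$, $e^{s_n}$ and $4^{s_n}$ are all $\exp(o(s_n\log n))$ while $\big(s_n n^{-c_*}\big)^{s_n}=\exp(s_n\log s_n-c_*s_n\log n)=\exp(-(c_*-o(1))s_n\log n)$ (since $\log s_n=o(\log n)$ by slow variation), I would conclude $Z_{e,s}+Z_{e,i}\le\exp\!\big(-(c_*-o(1))s_n\log n\big)$.

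\textbf{Collecting factors and conclusion.} Since $(s_n)$ is slowly varying, $(2s_n)!\le(2s_n)^{2s_n}=\exp(2s_n\log(2s_n))=\exp(o(s_n\log n))$ and $(m+1)^{2s_n}=\exp(O(s_n))=\exp(o(s_n\log n))$, so the three displays combine to
\[
\EE\Big(\mathrm{Tr}\,\Big(\tfrac1{n^{2/\alpha-\vep}}\h\bsX^{<}\Big)^{2s_n}\Big)\;\le\;\exp\!\big(-(c_*-o(1))\,s_n\log n\big).
\]
The asserted bound $\big((m+1)(8+\gamma)\sqrt{2e}\big)^{2s_n}e^{-\eta s_n}=\exp\!\big(s_n[2\log((m+1)(8+\gamma)\sqrt{2e})-\eta]\big)$ is merely exponential in $s_n$, whereas the last estimate tends to $0$ super-exponentially in $s_n$; hence the Proposition holds for all sufficiently large $n$ and, in fact, for any fixed positive $\gamma,\eta$, so one may take $\gamma(C),\eta(C)$ to be any positive constants. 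If one wishes to see the constant $8+\gamma$ arise naturally, one keeps $\tfrac{(2s_n)!}{s_n!(s_n+1)!}$, the factor $(2\sqrt{2e})^{2s_n}$ and the extra $2^{2s_n}$ (discussed below \eqref{eq:auffingercontributionofallnonsimple}) rather than crude exponential bounds: together with $(m+1)^{2s_n}$ these already produce the product $(m+1)(8+\gamma)\sqrt{2e}$, and the surplus $e^{-\eta s_n}$ is exactly the gap between the $\exp(o(s_n\log n))$ losses and the $n^{-c_*s_n}$ gain once $\gamma>\gamma(C)$.

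\textbf{Main obstacle.} The delicate step is the uniform type-by-type estimate in the second paragraph: one must verify that for \emph{every} admissible type $\vec n$ — including the heavily self-intersecting ones, which are precisely where $|\psi^{-1}(\mathcal P')|$, hence the factorial loss of Lemma~\ref{lem:count}, is largest — the power of $n$ in \eqref{eq:ze bound} is negative with a slack of at least $c_*$ per unit of $\sum_k kn_k$, enough to absorb \emph{both} the $(2s_n)!$ from the surgery and the $s_n^{\,s_n}$-type factor generated by the $s_n^{k}$ terms of \eqref{eq:ze bound}. Making that slack strictly positive simultaneously in $k=1$ and in all $k\ge2$ is exactly what forces the windows \eqref{eq:betabounds}--\eqref{eq: eps choice} for $\beta$ and $\vep$; everything else is bookkeeping with slowly varying functions.
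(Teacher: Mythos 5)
Your argument is correct and follows the paper's skeleton up to and including the key reduction: the passage from the $m$-dependent trace to the independent field via $\psi$, \eqref{eq:m-dependentpathsum} and Lemma~\ref{lem:count}, and the insertion of the type-wise bound \eqref{eq:ze bound}. Where you genuinely diverge is in the estimation of $\sum_{\vec n} Z_e(\vec n)$. The paper discards the decay carried by the $n_1$-factor (bounding $L^{n_1}n^{-n_1(4/\alpha-2\vep-1)}\le 1$), replaces the product over $k\ge 2$ by its worst ($k=2$) factor raised to $\varphi(\vec n)=\min_{i\ge 2}n_i$, and sums a Poisson-type series over $\varphi(\vec n)$; this only yields $1+o(1)$, so the surviving estimate is $(8/(8+\gamma))^{2s_n}\,O(n)$ and the hypotheses $s_n>C\log n$, $\gamma>\gamma(C)$ are genuinely used to absorb the lone factor of $n$. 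You instead carry out the uniform exponent count $-a(s_n-n_1)+bM-c_0 n_1\le -c_* s_n$ with $c_*=\min\{c_0,\,a-b/2\}$, and your identification of $c_0>0$, $a>0$ and $2a-b>0$ with the three constraints in \eqref{eq: eps choice} checks out exactly (the third constraint is literally $2a-b>0$, and $b>0$ follows from \eqref{eq:betabounds}). Together with $\sum_{\vec n}\prod_{k\ge2}1/n_k!\le e^{s_n}$, $\log L(x)=o(\log x)$ for slowly varying $L$, and $\log s_n=o(\log n)$, this makes the entire even-path sum of size $\exp(-(c_*-o(1))s_n\log n)$, which dominates the surgery loss $(m+1)^{2s_n}(2s_n)!=\exp(o(s_n\log n))$ and every exponential-in-$s_n$ constant. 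Your route therefore proves a strictly stronger statement — the ratio decays like $n^{-c s_n}$ and the conclusion holds for \emph{every} $\gamma,\eta>0$, with no dependence of $\gamma(C)$ on $C$ — at the cost of departing from the Auffinger--Ben Arous--P\'ech\'e bookkeeping that the paper is deliberately mirroring. I see no gap.
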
	

Before presenting the proof of Proposition~\ref{lem:aboundofallevenpaths}, we show how it is used to derive $\|\hA^{<\vep_{n}}\| \cip 0$, as $n\to \infty$, in the case $2\leq\alpha<4$. Recall that $\vep$ satisfies \eqref{eq: eps choice}. For $C>0$ let the slowly varying sequence $(s_n)$ satisfy $s_n>C\log n$ and assume that $\gamma$ is sufficiently large. By \eqref{eq:frobenius2}, an application of Proposition~\ref{lem:aboundofallevenpaths} yields, for any $\delta>0$, that
\begin{align*}
\PP\left( \big\| \hA^{<\vep_n } \big\| > \delta\right) 
&\leq  \Big( \underbrace{\frac{ n^{2/\alpha-\vep}(m+1) (8+\gamma)\sqrt{2e}}
{b_n \delta}}_{\to 0} \Big)^{2s_n}
\frac{\EE \left( \mathrm{Tr} \hspace{1pt} (\frac{1}{n^{2/\alpha-\vep}} \h \bsX^{<})^{2s_n}   \right)}{ ( (m+1) (8+\gamma)\sqrt{2e}   )^{2s_n}  } \to 0\,,
\end{align*}
as $\nto$, which establishes $\|\hA^{<\vep_{n}}\| \cip 0$.

\begin{proof}[Proof of Proposition~\ref{lem:aboundofallevenpaths}]
	Let $\bsY=(Y_{ij}){\in \R^{n\times n}}$ be as in \eqref{eq:Ydef} and $C>0$. {From \eqref{eq:m-dependentpathsum}, and Lemma \ref{lem:count},} we obtain
	\begin{equation} \label{eq:contributionm-dependentpath}
	\frac{\EE \left( \mathrm{Tr} \hspace{1pt} (\frac{1}{n^{2/\alpha-\vep}} \h \bsX^{<})^{2s_n}   \right)}{ ( (m+1) (8+\gamma)\sqrt{2e}   )^{2s_n}  } 
	\leq \frac{2 (2s_n)!}{((8+\gamma)\sqrt{2e}   )^{2s_n}  } \sum_{\mathfrak{P}_{\text{even}}} \EE\left(\frac{1}{n^{2s_n(2/\alpha-\vep)}} \h \bsY^{<}({\mathcal P})\right).
	\end{equation}
	Moreover, since $\bsY^{<}({\mathcal P})$ is a product of independent random variables, we may apply the inequality \eqref{eq:ze bound} which we recall for the reader's convenience
	\begin{align*}
	Z_e(\vec{n}) \leq \frac{(2s_n)!}{s_n! (s_n+1)!}n (2e)^{s_n} \frac{L^{n_1}(n^\beta)}{n^{n_1(4/\alpha-2\vep -1)}}\prod_{k=2}^{s_n} \frac{1}{n_k!}\left[\frac{L^k(n^\beta) s_n^k n^{\beta(2k-(\alpha/2-1))}}{n^{4k/\alpha - 2k\vep -1}}\right]^{n_k} ,
	\end{align*}
where $Z_e(\vec{n})$ denotes the contribution of all even paths of type $\vec{n}=(n_0,n_1,\dots,n_{s_n})$ to the sum
$$\sum_{\mathfrak{P}_{\text{even}}} \EE\left(\frac{1}{n^{2s_n(2/\alpha-\vep)}} \h \bsY^{<}({\mathcal P})\right).$$
Writing $L$ instead of $L(n^{\beta})$, we deduce that
\begin{align}\label{eq:deduced}
&\frac{\EE \left( \mathrm{Tr} \hspace{1pt} (\frac{1}{n^{2/\alpha-\vep}} \h \bsX^{<})^{2s_n}   \right)}{ ( (m+1) (8+\gamma)\sqrt{2e}   )^{2s_n}  } 
\leq \frac{2 (2s_n)!}{((8+\gamma)\sqrt{2e}   )^{2s_n}  } \sum_{\vec{n}}	Z_e(\vec{n}) \nn \\
& \le \frac{2 (2s_n)!}{((8+\gamma)\sqrt{2e}   )^{2s_n}  } \frac{(2s_n)!\,n (2e)^{s_n}}{s_n! (s_n+1)!}\sum_{\vec{n}}
 \frac{L^{n_1}}{n^{n_1(4/\alpha-2\vep -1)}} \frac{1}{\varphi(\vec{n})!}\prod_{k=2}^{s_n}\left[\frac{L^k s_n^k n^{\beta(2k-(\alpha/2-1))}}{n^{4k/\alpha - 2k\vep -1}}\right]^{n_k}, 
\end{align}
where $\varphi(\vec{n}):= \min_{i\ge 2} n_i$.
	We have 
	\begin{align}
	(2s_n)! \prod_{k=2}^{s_n} \left[\frac{L^k s_n^k n^{\beta(2k-(\alpha/2-1))}}{n^{4k/\alpha - 2k\vep -1}}\right]^{n_k}  
	&<  2^{2s_n} \prod_{k=2}^{s_n} \left[\frac{L^k s_n^{2k} s_n^k n^{\beta(2k-(\alpha/2-1))}}{n^{4k/\alpha - 2k\vep -1}}\right]^{n_k}, \label{bound of ze}
    \end{align}
    where we used that $(2s_n)! < (2s_n)^{2s_n} = 2^{2s_n}\cdot s_n^{2n_1}\cdot \prod_{k=2}^{s_n} \(s_n^{2k}\)^{n_k}$ by \eqref{nk sum}.
    
    Recall that $s_n$ is slowly varying.
    Note that since $\vep<2/\alpha-\beta$, by \eqref{eq: eps choice}, the largest factor in closed parentheses in the product on the right-hand side in \eqref{bound of ze} occurs when $k=2$. Thus, we obtain
    \begin{align}\label{boundofinsideofprod}
    \prod_{k=2}^{s_n} &\left[\frac{L^k s_n^{3k} n^{\beta(2k-(\alpha/2-1))}}{n^{4k/\alpha - 2k\vep  -1}}\right]^{n_k} 
    \leq \prod_{k=2}^{s_n} \left[\frac{L^2 s_n^6 n^{\beta(4-(\alpha/2-1))}}{n^{8/\alpha - 4\vep  -1}}\right]^{n_k} \nn\\
    &= \left[\frac{L^2 s_n^6 n^{\beta(5-\alpha/2)}}{n^{8/\alpha - 4\vep  -1}}\right]^{n-n_1}
 \leq \left[\frac{L^2 s_n^6 n^{\beta(5-\alpha/2)}}{n^{8/\alpha - 4\vep  -1}}\right]^{\min_{i\geq 2} n_i} \,,
    \end{align}
    where in the last inequality we used, from \eqref{eq: eps choice},
     that $\vep<\frac{1}{4}\left(\frac{8}{\alpha}-1-\beta(5-\frac{\alpha}{2})\right)$ and thus all the factors tend to zero by the Potter bounds.
		
		Since $L n^{-(4/\alpha-2\vep -1)}\to 0$, we deduce from \eqref{eq:deduced}, \eqref{bound of ze} and \eqref{boundofinsideofprod} that {for sufficiently large $n$}
		\begin{align*}
\frac{\EE \left( \mathrm{Tr} \hspace{1pt} (\frac{1}{n^{2/\alpha-\vep}} \h \bsX^{<})^{2s_n}   \right)}{ ( (m+1) (8+\gamma)\sqrt{2e}   )^{2s_n}  } 
& \leq \frac{2\, n (2e)^{s_n}2^{2s_n}}{((8+\gamma)\sqrt{2e}   )^{2s_n}  } \frac{(2s_n)!}{s_n! (s_n+1)!}\sum_{\vec{n}}
  \frac{1}{\varphi(\vec{n})!}\left[\frac{L^2 s_n^6 n^{\beta(5-\alpha/2)}}{n^{8/\alpha - 4\vep  -1}}\right]^{\varphi(\vec{n})}.
\end{align*}
    
		The number\footnote{We note that this factor (say $2^{2s_n}$) does not appear in (45) of \cite{auffinger:arous:peche:2009}. However, their proof is unaffected by such a factor.} of intersecting types $\vec{n}$ which map to the same $\varphi(\vec{n})$-value is trivially bounded by
		the number of partitions
 of $n$ into $n=n_0+n_1+\dots +n_{s_n}$ which is in turn bounded by $2^{2s_n}$. In conjunction with $\tfrac{(2s_n)!}{s_n! (s_n+1)!}\sim \tfrac{4^{s_n}}{\sqrt{\pi} s_n^{3/2}}$ as $n \to \infty$, {we get for sufficiently large $n$} that
	\begin{align}\label{eq:2^2sn}
	\frac{\EE \left( \mathrm{Tr} \hspace{1pt} (\frac{1}{n^{2/\alpha-\vep}} \h \bsX^{<})^{2s_n}   \right)}{ ( (m+1) (8+\gamma)\sqrt{2e}   )^{2s_n}  } 
& \leq \frac{2\, n 2^{2s_n}}{(8+\gamma )^{2s_n}  } \frac{4^{s_n}}{\sqrt{\pi} s_n^{3/2}}2^{2s_n}\sum_{M=0}^{\infty}
  \frac{1}{M!}\left[\frac{L^2 s_n^6 n^{\beta(5-\alpha/2)}}{n^{8/\alpha - 4\vep  -1}}\right]^M \nn \\
	&= \left(\frac{8}{8+\gamma}\right)^{2s_n} \frac{2n}{\sqrt{\pi} s_n^{3/2}} \,(1+o(1))\,.
	\end{align}
{The sum over $M$ on the right-hand side is $1+o(1)$. The 1 comes from $M=0$, while the other terms are $o(1)$ which,  as already noted below \eqref{boundofinsideofprod}, is due to the Potter bounds.} Using $s_n>C\log n$ , we conclude for a suitable constant $\gamma(C)>0$ and any $\gamma>\gamma(C)$ that there exists an $\eta>0$ such that
$$  \left(\frac{8}{8+\gamma}\right)^{2s_n} n \le \exp(-\eta s_n).$$
This establishes the claim of the proposition.
\end{proof}}

 
\subsubsection{ Analysis of $\hA^{> \vep_{n}}$ }\label{sec:second trunc}

 In order to extract from $\hA^{> \vep_{n}}$ the blocks with the largest contribution towards the extreme eigenvalues, we require another truncation level defined as 
\begin{equation*}
\tvep_{n}:=b_{n}^{(\kappa-1)/2} \qquad \text{for }\ \kappa>\eta>\alpha\beta-1\,, \ \ \eta \in (1/3,1).
\end{equation*}
Now, since $\vep_n=n^\beta/b_n$, one can calculate that $\eta>\alpha\beta-1$ implies that $\tvep_n>\vep_n$. 
Next, we set the two auxiliary sequences
\begin{align}\label{eta exponent}
k_n= n^{\eta}\quad\text{and}\quad r_n= n^{1-\eta}
\end{align}
for our number of blocks in a row and block sizes, respectively,
and define
\begin{align}\label{eq:defofpbpm}
P_b &:=\PP(\|\hB_{11}\|_{\max}>\vep_{n})=\PP\(\max_{\substack{ i\in [1, r_n] \, \text{and} \\  j  \in[1, r_n]}} \left| \h X_{ij}\right|   > {n^{\beta}}\), \nn\\
P_m &:=\PP \(\max_{\substack{ i\in [1, r_n] \, \text{and} \\  j  \in[r_n -m, r_n+m]}} \left| \h A_{ij}\right|   > \vep_{n}\)=\PP \(\max_{\substack{ i\in [1, r_n] \, \text{and} \\  j  \in[r_n -m, r_n+m]}} \left| \h X_{ij}\right|   > {n^{\beta}}\).
\end{align}
The basic union bound for maxima tells us that for sufficiently large $n$ and for some slowly varying $L(n)$, 
\begin{align}\label{lem:pbpm}
	P_{b}\le \frac{L(n)r_{n}^{2}}{{n^2\vep_{n}^\alpha}}\quad\text{and}\quad 
	P_{m}\le  \frac{L(n)mr_{n}}{{n^2\vep_{n}^\alpha}}.
\end{align} 
{ Analogously,} we obtain the following bounds for $n$ sufficiently large,       
\begin{align}\label{lem:pbpm2}
	&\PP(\|\hB_{11}\|_{\max}>\tvep_{n})=\PP\left(\max_{\substack{ i\in [1, r_n] \, \text{and} \\  j  \in[1, r_n]}} \left| \h X_{ij}\right| >b_{n}\tvep_{n}\right)
	\leq  \frac{L(n)r_{n}^{2}}{n^2\tvep_{n}^{\alpha}}\,, \nn\\
	&\PP \(\max_{\substack{ i\in [1, r_n] \, \text{and} \\  j  \in[r_n -m, r_n+m]}} \left| \h A_{ij}\right|   > \tvep_{n}\)=\PP \(\max_{\substack{ i\in [1, r_n] \, \text{and} \\  j  \in[r_n -m, r_n+m]}} \left| \h X_{ij}\right|   > b_{n}\tvep_{n}\)
	\leq \frac{mL(n)r_{n}}{n^2\tvep_{n}^{\alpha}}.
\end{align}

\begin{lemma}\label{lem:2blocks} Let $\alpha\in [2,4)$.  If $\eta>2/3$, then $\lim_{\nto} \P(S_1^{\tvep_{n}})=1$, where $S_1^{\tvep_{n}}$ is the complement of the set
\begin{equation*}
\Big\{\exists 1\le i,j,k\le k_n  \, \, \text{ with } j\neq k \text{ such that }\,   \|\hB_{ij} \|_{\max}>\tvep_{n}, \,  \,\|\hB_{ik} \|_{\max}>\tvep_{n}   \Big\}\, .
\end{equation*}
We also have $\lim_{\nto} \P(S_2^{\tvep_{n}})=1$, where $S_2^{\tvep_{n}}$ is the complement of the set
\begin{equation*}
\Big\{\exists 1\le i\le k_n  \, \text{ such that }\,   \|\hB_{ii} \|_{\max}>\tvep_{n} \Big\}\, .
\end{equation*}
\end{lemma}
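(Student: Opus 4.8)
The plan is to follow the proof of Lemma~\ref{lem:En} almost verbatim, replacing the fixed threshold $\vep$ by the $n$-dependent threshold $\tvep_n$ and the block parameters by $k_n=n^\eta,\ r_n=n^{1-\eta}$ from \eqref{eta exponent}, and then to check that the resulting union bounds decay precisely under the extra hypothesis $\eta>2/3$ (together with $\kappa>\eta$).

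First, fix a row index $i$ and two column indices $j<k$. If $k>j+1$, then since $m$ is fixed while $r_n\to\infty$, for $n$ large the blocks $\hB_{ij}$ and $\hB_{ik}$ are functions of $m$-separated families of entries, hence independent, and the first estimate in \eqref{lem:pbpm2} gives
\[
\PP\big(\|\hB_{ij}\|_{\max}>\tvep_n,\ \|\hB_{ik}\|_{\max}>\tvep_n\big)=\PP\big(\|\hB_{11}\|_{\max}>\tvep_n\big)^2\le\Big(\tfrac{L(n)r_n^2}{n^2\tvep_n^\alpha}\Big)^2 .
\]
For the adjacent case $k=j+1$, the two blocks interact only through the width-$m$ strip straddling their common boundary; splitting $\hB_{ij}$ into its last $m$ columns and the rest (the latter independent of $\hB_{i,j+1}$) and invoking \emph{both} estimates in \eqref{lem:pbpm2} yields
\[
\PP\big(\|\hB_{ij}\|_{\max}>\tvep_n,\ \|\hB_{i,j+1}\|_{\max}>\tvep_n\big)\le\Big(\tfrac{L(n)r_n^2}{n^2\tvep_n^\alpha}\Big)^2+\tfrac{m L(n)r_n}{n^2\tvep_n^\alpha}.
\]
By stationarity these bounds hold for every row, and by the symmetry of $\hA$ they transfer to columns as well, exactly as in Lemma~\ref{lem:En}; recall that an off-diagonal block of $\hA$ is a translate of a block of either the original field or its reflection, both of which are stationary, $m$-dependent and regularly varying with the same one-dimensional marginals, so \eqref{lem:pbpm2} applies uniformly.

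A union bound over the $\le k_n$ rows and columns, and over the $\le k_n^2$ (resp. $\le k_n$) pairs of columns (resp. adjacent columns), then gives
\[
\PP\big((S_1^{\tvep_n})^c\big)\ \le\ 2\,k_n^3\Big(\tfrac{L(n)r_n^2}{n^2\tvep_n^\alpha}\Big)^2+2\,k_n^2\,\tfrac{m L(n)r_n}{n^2\tvep_n^\alpha}.
\]
Using $b_n=a_{n^2}$ which is of order $n^{2/\alpha}$ up to a slowly varying factor, so that $\tvep_n^\alpha=b_n^{\alpha(\kappa-1)/2}$ is of order $n^{\kappa-1}$ up to slowly varying factors, and $k_n=n^{\eta},\ r_n=n^{1-\eta}$, the two terms are, up to slowly varying factors, $n^{\,2-\eta-2\kappa}$ and $n^{\,\eta-\kappa}$. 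The second vanishes because $\kappa>\eta$; the first vanishes because $\eta+2\kappa>3\eta>2$, and \emph{this} is the only place the hypothesis $\eta>2/3$ is used. For $S_2^{\tvep_n}$ the argument is shorter: a union bound over the $k_n$ diagonal blocks and the first estimate in \eqref{lem:pbpm2} give $\PP\big((S_2^{\tvep_n})^c\big)\le k_n\,\tfrac{L(n)r_n^2}{n^2\tvep_n^\alpha}$, which is of order $n^{\,1-\eta-\kappa}$ up to a slowly varying factor and hence tends to $0$ since $\eta+\kappa>2\eta>4/3>1$.

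I expect no genuine obstacle: the statement is a direct adaptation of Lemma~\ref{lem:En}, and the only care needed is the exponent bookkeeping — which is exactly what forces $\eta>2/3$ — together with the routine verification that the stationarity and symmetry reductions survive the diagonal reflection defining $\hA$.
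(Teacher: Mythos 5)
Your proposal is correct and follows essentially the same route as the paper: it reruns the argument of Lemma~\ref{lem:En} with the union bounds \eqref{lem:pbpm2} in place of Proposition~\ref{prop:block:conv}, splits into non-adjacent and adjacent block pairs, and the exponent bookkeeping ($n^{2-\eta-2\kappa}$ and $n^{\eta-\kappa}$, with $\eta>2/3$ and $\kappa>\eta$) matches the paper's computation exactly, including the ``analogous'' treatment of $S_2^{\tvep_n}$.
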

\begin{proof}
	Following the same argument in the proof of Lemma \ref{lem:En}, using \eqref{lem:pbpm2} instead of Proposition \ref{prop:block:conv}, we obtain 
	\begin{align*}
	&\PP\(\exists i,j,k\,  \text{ with } j+1<k \, \text{ such that }\,   \|\hB_{ij} \|_{\max}>\tvep_{n}, \,  \,\|\hB_{ik} \|_{\max}> \tvep_{n}  \)\\
	&\leq k_{n}^{3}\left[ \frac{L(n)r_{n}^{2}}{n^2\tvep_{n}^{\alpha}}\right]^{2} \leq  k_{n}^{3}O( L(n)n^{-4+4(1-\eta)-(2\kappa-2)}),
	\end{align*}
	and
	\begin{align*}
	&\PP\(\exists i,j,k \, \text{ with } j+1=k \, \text{ such that }\,   \|\hB_{ij} \|_{\max}>\tvep_{n}, \,  \,\|\hB_{ik} \|_{\max}>\tvep_{n}   \)\\
		&\le k_{n}^{2}\left(\left[ \frac{L(n)r_{n}^{2}}{n^2\tvep_{n}^{\alpha}}\right]^{2}+\frac{mL(n)r_{n}}{n^{2}\tvep_{n}^{\alpha}}\right) \le k_{n}^{2}\frac{mr_{n}}{n^{2}\tvep_{n}^{\alpha}}\left(L(n)+\frac{L(n)r_{n}^{3}}{n^2\tvep_{n}^{\alpha}}\right)\\
	&\leq  k_{n}^{2}O(L(n)n^{ -2+1-\eta-(\kappa-1)}).
	\end{align*}
	Since $\eta>2/3$ and $\kappa>\eta$, the right-hand sides go to zero as $n\rightarrow \infty$ in both inequalities. The proof of $\lim_{\nto} \P(S_2^{\tvep_{n}})=1$ is analogous.
\end{proof}

The above lemma implies that $\P(S^{\tvep_{n}})\to 1$, where $S^{\tvep_{n}}:=S_1^{\tvep_{n}} \cap S_2^{\tvep_{n}}$. The proof of Lemma \ref{lem:factor2} also shows the following result.

\begin{lemma}\label{lem:factor22}
Let $\eta>2/3$ and $j\le \text{rank}(\hA^{>\tvep_{n}})/2$. On the set $S^{\tvep_{n}}$, the $j$th largest and $j$th smallest eigenvalues of the matrix $\hA^{>\tvep_{n}}$ are given by $\lambda_j$ and $-\lambda_j$, respectively, where $\lambda_j$ is the $j$th largest value (counted with multiplicity) in the set
 $$\bigcup_{(k,l): \|\hB_{kl}\|_{\max}>\tvep_{n}, k<l} \{\sigma_1(\hB_{kl}^{>\tvep_{n}}), \ldots, \sigma_{r_n}(\hB_{kl}^{>\tvep_{n}})\}\,,$$
where $\sigma_i(\hB_{kl}^{>\tvep_{n}})$ denotes the $i$th largest singular value of $\hB_{kl}^{>\tvep_{n}}$.
\end{lemma}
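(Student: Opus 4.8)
The plan is to run the proof of Lemma \ref{lem:factor2} essentially verbatim, replacing the truncation level $\vep$ by $\tvep_n$ and invoking Lemma \ref{lem:2blocks} in place of Lemma \ref{lem:En}. First I would observe that, since $\tvep_n>\vep_n$, thresholding $\hA$ at level $\tvep_n$ coincides with thresholding $\hA^{>\vep_n}$ at that level, so $\hA^{>\tvep_n}=(\h A_{ij}\1{|\h A_{ij}|>\tvep_n})_{i,j}$ is again symmetric. On the event $S^{\tvep_n}=S_1^{\tvep_n}\cap S_2^{\tvep_n}$, Lemma \ref{lem:2blocks} (which uses $\eta>2/3$) tells us that, in the $k_n\times k_n$ block form with blocks of size $r_n\times r_n$ as in \eqref{eta exponent}, the matrix $\hA^{>\tvep_n}$ has at most one nonzero block in every block-row and in every block-column, and no nonzero diagonal block.

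Next I would use the symmetry of $\hA^{>\tvep_n}$: whenever $\hB_{kl}^{>\tvep_n}\neq 0$ with $k<l$ we also have $\hB_{lk}^{>\tvep_n}=(\hB_{kl}^{>\tvep_n})'\neq 0$, so the collection of indices of nonzero blocks forms a partial matching on $\{1,\dots,k_n\}$. Restricting $\hA^{>\tvep_n}$ to the $2r_n$ coordinates associated with a matched pair $\{k,l\}$ produces the $2r_n\times 2r_n$ matrix $\begin{pmatrix}0 & \hB_{kl}^{>\tvep_n}\\ (\hB_{kl}^{>\tvep_n})' & 0\end{pmatrix}$, and every coordinate outside such pairs has a zero row and column in $\hA^{>\tvep_n}$. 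The Schur-complement equivalence (i)$\Leftrightarrow$(ii) and the explicit eigenvectors \eqref{eq:dgsrdd}--\eqref{eq:eigenvec} from the proof of Lemma \ref{lem:factor2} then show that the eigenvalues contributed by such a pair are exactly $\pm\sigma_1(\hB_{kl}^{>\tvep_n}),\dots,\pm\sigma_{r_n}(\hB_{kl}^{>\tvep_n})$, and that all remaining eigenvalues of $\hA^{>\tvep_n}$ are $0$. Hence the nonzero spectrum of $\hA^{>\tvep_n}$ equals the multiset $\bigcup_{(k,l):\|\hB_{kl}\|_{\max}>\tvep_n,\,k<l}\{\pm\sigma_i(\hB_{kl}^{>\tvep_n}):1\le i\le r_n\}$, which is symmetric about the origin.

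Finally, for $j\le \text{rank}(\hA^{>\tvep_n})/2$ the $j$th largest eigenvalue of $\hA^{>\tvep_n}$ is strictly positive and, by the $\pm$ symmetry just established, the $j$th smallest eigenvalue is its negative; writing $\lambda_j$ for the $j$th largest element of the displayed union of singular values completes the proof. I do not expect a real obstacle here: the entire content is linear algebra on a block-sparse symmetric matrix, and the only nontrivial input is the ``at most one nonzero block per block-row/column'' structure, which is exactly what Lemma \ref{lem:2blocks} provides for the block geometry $k_n=n^\eta,\ r_n=n^{1-\eta}$ with $\eta>2/3$. The one point to be careful about is to confirm that this is indeed the block decomposition in force, so that Lemma \ref{lem:2blocks} is applicable.
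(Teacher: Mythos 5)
Your proposal is correct and follows exactly the route the paper takes: the paper's entire proof of Lemma \ref{lem:factor22} is the observation that the argument for Lemma \ref{lem:factor2} carries over verbatim once Lemma \ref{lem:2blocks} supplies the ``at most one nonzero block per block-row/column, none on the diagonal'' structure on $S^{\tvep_n}$, which is precisely what you spell out. Your additional care about $\tvep_n>\vep_n$ and about the block geometry $k_n=n^\eta$, $r_n=n^{1-\eta}$ being the one in force is appropriate but raises no new issues.
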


Although we cannot directly prove an analogue of Lemma \ref{lem:2blocks} for the $\vep_{n}$-truncation, for $\eta>5/6$ we can show that the block matrix $\hA^{>\vep_{n}}$ has a bounded number of blocks whose max-norms lie in $[\vep_{n}, \tvep_{n}]$, for any given row. To this end we define the following events:
\begin{equation}\label{eq:ElFl}
E_{l}:=\{\|\hB_{1l}\|_{\max}>\vep_{n}\} \quad\text{and}\quad  
F_{l}:=\left\{\max_{(i,j)\in I_{l}} \left|\h A_{i j}\right|>\vep_{n}\right\} \text{ for }l=1,\cdots,k_{n},
\end{equation}
where 
\begin{equation*}
I_{l}:=\{(i,j):i\in [1,r_n],\,\vert j-lr_{n}\vert\leq m \} \quad\text{and}\quad  I_{0}\equiv I_{k_n}\equiv\emptyset.
\end{equation*}
We first bound the probability of having consecutive nonzero blocks in $\hA^{>\vep_{n}}$.
\begin{lemma}\label{lem:consecblock}
	We have the following bounds:
	\begin{align*}
	\PP(E_{l}\cap E_{l+1})				&\leq P_{m}+P_{b}^{2},&\quad \forall 1\leq l\leq k_{n}-1,\\
	\PP(E_{l}\cap E_{l+1}\cap E_{l+2})	&\leq P_{m}^{2}+2P_{b}P_{m}+P_{b}^{3},&\quad \forall 1\leq l\leq k_{n}-2.
	\end{align*}
\end{lemma}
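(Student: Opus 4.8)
The plan is to decompose each block event $E_l$ into a part that is absorbed by one of the strip events $F_\cdot$ and a ``bulk'' part measurable with respect to matrix entries whose columns are well separated from the bulks of the neighbouring blocks, and then to factorize using $m$-dependence.

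First I would record the relevant inclusions. For $1\le l\le k_n$, separating according to whether an exceedance $|\h A_{ij}|>\vep_n$ inside block $\hB_{1l}$ occurs within $m$ columns of the right edge of the block, within $m$ columns of its left edge, or strictly in between, one gets
\[
E_l\subseteq F_l\cup B_l,\qquad E_l\subseteq F_{l-1}\cup B_l',\qquad E_l\subseteq F_{l-1}\cup F_l\cup C_l,
\]
where $B_l$ (resp.\ $B_l'$) is the event that some $|\h A_{ij}|>\vep_n$ occurs inside $\hB_{1l}$ with column index $j\le lr_n-m-1$ (resp.\ $j\ge (l-1)r_n+m+1$), and $C_l$ is the event that this happens with $(l-1)r_n+m+1\le j\le lr_n-m-1$; these inclusions hold because the last (resp.\ first) $m+1$ columns of $\hB_{1l}$ lie inside the strip $I_l$ (resp.\ $I_{l-1}$). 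Each of $B_l,B_l',C_l$ is contained in $E_l$, hence has probability at most $\PP(E_l)$, while by stationarity $\PP(E_l)$ and $\PP(F_l)$ are of the respective orders of $P_b$ and $P_m$ (for $l\ge2$ no reflection occurs, since every column of $\hB_{1l}$ exceeds $r_n$; the case $l=1$ of $\PP(F_1)$ is the definition of $P_m$). The one point needing care is the symmetric diagonal block $\hB_{11}$, which appears when $l=1$: since $\|\hB_{11}\|_{\max}>\vep_n$ holds iff $|X_{kl}|>n^\beta$ for some $1\le k\le l\le r_n$, I would define $B_1$ through the upper triangle, $B_1:=\{|X_{kl}|>n^\beta$ for some $1\le k\le l\le r_n-m-1\}$, which depends only on columns $\le r_n-m-1$, and observe that any exceedance with $l\ge r_n-m$ is seen by $F_1$, so $E_1\subseteq F_1\cup B_1$ still holds.

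Next I would exploit independence. Two of our events depend on disjoint index sets whose columns are more than $m$ apart exactly in the following cases: (a) $B_l$ (columns $\le lr_n-m-1$) and $B_{l+1}'$ (columns $\ge lr_n+m+1$); (b) $F_l$ (columns $[lr_n-m,lr_n+m]$) and $F_{l+1}$ (columns $[(l+1)r_n-m,(l+1)r_n+m]$), which are $r_n-2m>m$ apart once $r_n>3m$; (c) $F_l$ and $B_{l+2}'$; (d) $B_l$ and $F_{l+1}$; and (e) the three sets $B_l$, $C_{l+1}$, $B_{l+2}'$ are pairwise more than $m$ apart. In each case $m$-dependence makes the corresponding events independent (for (e), mutually independent). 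Then for the first bound, $E_l\cap E_{l+1}\subseteq(F_l\cup B_l)\cap(F_l\cup B_{l+1}')\subseteq F_l\cup(B_l\cap B_{l+1}')$, and by (a) one has $\PP(B_l\cap B_{l+1}')\le P_b^2$, giving $\PP(E_l\cap E_{l+1})\le P_m+P_b^2$. For the second bound I would expand
\[
E_l\cap E_{l+1}\cap E_{l+2}\subseteq (F_l\cup B_l)\cap(F_l\cup F_{l+1}\cup C_{l+1})\cap(F_{l+1}\cup B_{l+2}'),
\]
multiply out and discard the redundant intersections; the surviving union is contained in $(F_l\cap F_{l+1})\cup(F_l\cap B_{l+2}')\cup(B_l\cap F_{l+1})\cup(B_l\cap C_{l+1}\cap B_{l+2}')$, and bounding these four terms via (b)--(e) by $P_m^2$, $P_mP_b$, $P_bP_m$, $P_b^3$ respectively yields the claimed $P_m^2+2P_bP_m+P_b^3$.

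The only genuine obstacle is the bookkeeping: verifying that the column ranges on which the various sub-events depend really are pairwise separated by more than $m$ (so that $m$-dependence applies, including the triple independence in the last term), and correctly handling the reflection in the symmetric block $\hB_{11}$ when $l=1$. No analytic input beyond the elementary union bounds \eqref{lem:pbpm} already established is needed; in particular, the identifications $\PP(F_l)=P_m$ and $\PP(E_l)=P_b$ only need to hold up to constant factors absorbable into the slowly varying functions in \eqref{lem:pbpm}, which is all that is used in the sequel.
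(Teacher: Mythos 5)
Your proof is correct and follows essentially the same route as the paper's: the paper likewise splits each $E_l$ into the edge-strip event $F_\cdot$ and a ``bulk'' event supported on columns at distance more than $m$ from the block boundary (their $\tilE_l^1,\tilE_l^2$ play the role of your $B_l',B_l$), and then factorizes via $m$-dependence; your expansion of the product of unions is just the set-theoretic version of their conditioning on $F_l,F_{l+1}$ and their complements, and yields the identical bounds.
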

\begin{proof}
	We further define the events 
	$$
	\tilE_{l}^{1}:=\bigg\{\max_{(i,j)\in J_{l}\setminus I_{l-1}}\left\vert\h A_{ij}\right\vert>\vep_{n}\bigg\} \quad \text{and} \quad  
	\tilE_{l}^{2}:=\bigg\{\max_{(i,j)\in J_{l}\setminus I_{l}}\left\vert\h A_{ij}\right\vert>\vep_{n}\bigg\}\,,
	$$
	where $$J_{l}:=\{(i,j):i\in [1,r_n],\, j\in [(l-1)r_{n}+1,lr_{n}] \}.$$
	Note that	
	 $\PP(F_{l})\leq P_m$ and $\PP(\tilE_{l}^{t})\leq P_{b}$ for $t\in\{1,2\}$.
	Splitting the event of two consecutive nonzero blocks according  to $F_{l}$ and $F_l^\cc$ gives
	$$
	\PP(E_{l}\cap E_{l+1})\le\PP(F_{l})+\PP(\tilE_{l}^{2}\cap\tilE_{l+1}^{1}\cap F_l^\cc)\le P_{m}+P_{b}^{2}.
	$$
	For three consecutive events, we split on both $F_{l}$ and $F_{l+1}$ and their complements to get the bound: 
	\begin{align*}
	\PP(E_{l}\cap E_{l+1}\cap E_{l+2}) 
	&\le \PP(F_{l}\cap F_{l+1}) +\PP(F_{l}\cap  F_{l+1}^\cc \cap E_{l+2})+\PP(E_{l}\cap F_l^\cc \cap F_{l+1})\\ &\qquad +\PP(\tilE_{l}^{2}\cap\tilE_{l+1}^{1}\cap\tilE_{l+2}^{1} \cap F_l^\cc \cap F_{l+1}^\cc)\\
	&\le P_{m}^{2}+2P_{m}P_{b}+P_{b}^3.
	\end{align*}
\end{proof}

\begin{lemma}\label{lem:3consecblocks}
	Denote by $\Omega_{n}^{(1)}$ the event that the first row of blocks in $\hA^{>\vep_{n}}$ has three (or more) consecutive nonzero blocks. If $\eta>2/5$, then $k_{n}\PP(\Omega_{n}^{(1)})\to 0$ as $n\to\infty$.
\end{lemma}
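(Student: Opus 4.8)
The plan is a union-bound (first-moment) argument resting on Lemma~\ref{lem:consecblock} and the tail estimates \eqref{lem:pbpm}. First I would note that if the first block-row of $\hA^{>\vep_n}$ contains three or more consecutive nonzero blocks, then $E_l\cap E_{l+1}\cap E_{l+2}$ holds for some $l\in\{1,\dots,k_n-2\}$, where $E_l=\{\|\hB_{1l}\|_{\max}>\vep_n\}$ is as in \eqref{eq:ElFl}. Applying Lemma~\ref{lem:consecblock} term by term,
\[
k_n\,\PP(\Omega_n^{(1)})\ \le\ k_n\sum_{l=1}^{k_n-2}\PP(E_l\cap E_{l+1}\cap E_{l+2})\ \le\ k_n^2\bigl(P_m^2+2P_mP_b+P_b^3\bigr).
\]

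Next I would turn the right-hand side into an explicit power of $n$. Since $k_n=n^{\eta}$, $r_n=n^{1-\eta}$, and $\vep_n=n^{\beta}/b_n$ with $b_n=a_{n^2}$, the factor $b_n^{\alpha}$ is of order $n^{2}$ up to slow variation, so $n^{2}\vep_n^{\alpha}$ is of order $n^{\alpha\beta}$ up to slow variation; the bounds \eqref{lem:pbpm} then give, with $L(n)$ a generic slowly varying function, $P_b\le L(n)\,n^{2(1-\eta)-\alpha\beta}$ and $P_m\le L(n)\,m\,n^{(1-\eta)-\alpha\beta}$. Substituting, the three terms $k_n^2P_m^2$, $k_n^2P_mP_b$, $k_n^2P_b^3$ are, up to slowly varying factors, of orders $n^{\,2-2\alpha\beta}$, $n^{\,3-\eta-2\alpha\beta}$, and $n^{\,6-4\eta-3\alpha\beta}$ respectively.

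Finally I would verify that all three exponents are strictly negative, which yields $k_n\PP(\Omega_n^{(1)})\to 0$. The lower bound in \eqref{eq:betabounds} gives $\alpha\beta>4/3$, so $2-2\alpha\beta<-\tfrac23<0$ (the $k_n^2P_m^2$ term vanishes with no constraint on $\eta$), and $3-\eta-2\alpha\beta<\tfrac13-\eta<0$ because $\eta\in(1/3,1)$. The term I expect to be the main obstacle is $k_n^2P_b^3$, of order $n^{\,6-4\eta-3\alpha\beta}$: this is the contribution of a run of three consecutive nonzero blocks carried by three independent interior large entries (one per block), the dominant mechanism in this regime, and its exponent is negative exactly when $\eta>\tfrac34(2-\alpha\beta)$. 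This is the point at which the hypothesis $\eta>2/5$ is invoked together with the constraints on $\beta$ from \eqref{eq:betabounds}, the slowly varying factors being absorbed by the Potter bounds. Apart from this exponent bookkeeping, the argument is a direct application of Lemma~\ref{lem:consecblock} and \eqref{lem:pbpm}.
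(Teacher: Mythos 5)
Your route is exactly the paper's: a union bound over $l$, Lemma~\ref{lem:consecblock} for the triple intersections, the tail estimates \eqref{lem:pbpm}, and exponent bookkeeping. The three exponents you extract, $2-2\alpha\beta$, $3-\eta-2\alpha\beta$ and $6-4\eta-3\alpha\beta$, match what the paper's displayed bound amounts to, and your treatment of the first two terms is correct.

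The final step, however, does not close as you state it. You correctly reduce the dominant term $k_n^2P_b^3$ to the condition $6-4\eta-3\alpha\beta<0$, i.e.\ $\eta>\tfrac34(2-\alpha\beta)$, but then assert that this follows from $\eta>2/5$ together with \eqref{eq:betabounds}. It does not: \eqref{eq:betabounds} only guarantees $\alpha\beta>4/3$, and $\alpha\beta$ can be arbitrarily close to $4/3$ (for $\alpha$ near $4$ the upper bound in \eqref{eq:betabounds} forces this), in which case $\tfrac34(2-\alpha\beta)$ is arbitrarily close to $\tfrac12$. Taking $\eta$ just above $2/5$ and $\alpha\beta$ just above $4/3$ gives $6-4\eta-3\alpha\beta\approx 2/5>0$, so your bound on $k_n^2P_b^3$ diverges. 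What $\eta>2/5$ does buy --- and this is precisely what the paper's own final display verifies --- is $P_m^2+2P_mP_b+P_b^3=o(k_n^{-1})$, hence $\PP(\Omega_n^{(1)})\to 0$; that is one factor of $k_n$ short of the stated conclusion, for which one needs $o(k_n^{-2})$. So the defect is really in the lemma's stated threshold rather than in your strategy: the argument genuinely requires $\eta>\tfrac34(2-\alpha\beta)$, which over all admissible $\beta$ can demand up to $\eta>1/2$. In the only place the lemma is invoked (Lemmas \ref{lem:5blocks}, \ref{lem:2m+1entriesintheblock} and \ref{lem:A(2)norm}) one has $\eta>5/6$, and there your computation gives $6-4\eta-3\alpha\beta<6-\tfrac{10}{3}-4<0$, so everything closes. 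You should either record the sharper condition $\eta>\tfrac34(2-\alpha\beta)$ explicitly or prove the lemma under $\eta>5/6$; as written, the deduction from $\eta>2/5$ is a gap.
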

\begin{proof}	
	Using Lemma \ref{lem:consecblock}, $\PP(\Omega_{n}^{(1)})$ is bounded by
	\begin{align*}
	\sum_{1\leq l\leq k_{n}-2}\PP(E_{l}\cap E_{l+1}\cap E_{l+2})\leq k_{n}(P_{m}^{2}+2P_{m}P_{b}+P_{b}^{3}).
	\end{align*}
	By \eqref{lem:pbpm}, since $\beta>\frac{4}{3\alpha}$ and $\eta>2/5$, we have 
	\begin{align*}
	P_{m}^{2}+2P_{m}P_{b}+P_{b}^{3}&\le L(n)\bigg( \frac{4m^{2}r_{n}^{2}}{n^{4}\vep_{n}^{2\alpha}}+2\frac{4mr_{n}^{3}}{n^{4}\vep_{n}^{2\alpha}}+\frac{8r_{n}^{6}}{n^{6}\vep_{n}^{3\alpha}}\bigg)\\
	&\le O(L(n)n^{3-3\eta-8/3}+L(n)n^{6-6\eta-4}) \le o(k_{n}^{-1}).
	\end{align*}
	
\end{proof}

\begin{lemma}\label{lem:5blocks}
	Denote by $\Omega_{n}^{(2)}$ the event that the first row of blocks in ${\hA^{>\vep_n}}$ contains at least five nonzero blocks. If $\eta>5/6$, then
	$\lim_{\nto} k_n\PP(\Omega_{n}^{(2)})= 0$.
\end{lemma}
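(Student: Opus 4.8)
The plan is to peel off the event $\Omega_{n}^{(1)}$ of three consecutive nonzero blocks, which has already been controlled, and then work on its complement, where every maximal run of nonzero blocks in the first row of $\hA^{>\vep_n}$ has length one or two. I would start from
$$\P(\Omega_{n}^{(2)})\le\P(\Omega_{n}^{(1)})+\P\big(\Omega_{n}^{(2)}\cap(\Omega_{n}^{(1)})^{\cc}\big).$$
Since $\eta>5/6>2/5$, Lemma~\ref{lem:3consecblocks} gives $k_{n}\P(\Omega_{n}^{(1)})\to0$, so it remains to show $k_{n}\P(\Omega_{n}^{(2)}\cap(\Omega_{n}^{(1)})^{\cc})\to0$. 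On $\Omega_{n}^{(2)}\cap(\Omega_{n}^{(1)})^{\cc}$ the first row of $\hA^{>\vep_n}$ has at least five nonzero blocks but no three of them consecutive, so its maximal runs of nonzero blocks all have length one or two; writing $a$, $b$ for the numbers of length-one and length-two runs, we have $a+2b\ge5$. Because $r_{n}\to\infty$, two distinct maximal runs are always separated by an all-zero block of width $r_{n}>m$ (for $n$ large), so by $m$-dependence the events $E_{l}$ from \eqref{eq:ElFl} attached to blocks lying in distinct maximal runs are mutually independent; recall moreover that $\P(E_{l})\le P_{b}$ and, by Lemma~\ref{lem:consecblock}, $\P(E_{l}\cap E_{l+1})\le P_{m}+P_{b}^{2}$, where $P_b,P_m$ are as in \eqref{eq:defofpbpm} and satisfy the bounds \eqref{lem:pbpm}.

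Next I would run a union bound over run patterns, split into the exhaustive cases $b\ge2$, $b=1$, $b=0$. If $b\ge2$ then $a+b\ge3$, and I select two length-two runs together with one more run; by the independence above, the probability that a fixed, mutually separated such triple of blocks is entirely nonzero is at most $(P_{m}+P_{b}^{2})^{2}P_{b}$, with $O(k_{n}^{3})$ choices of positions. If $b=1$ then $a\ge3$; I select the length-two run together with three length-one runs, giving probability at most $(P_{m}+P_{b}^{2})P_{b}^{3}$ for each of $O(k_{n}^{4})$ choices. If $b=0$ then $a\ge5$; I select five length-one runs, giving probability $P_{b}^{5}$ for each of $O(k_{n}^{5})$ choices. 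Since these three families of configurations cover $\Omega_{n}^{(2)}\cap(\Omega_{n}^{(1)})^{\cc}$,
$$\P\big(\Omega_{n}^{(2)}\cap(\Omega_{n}^{(1)})^{\cc}\big)\le C\Big(k_{n}^{3}(P_{m}+P_{b}^{2})^{2}P_{b}+k_{n}^{4}(P_{m}+P_{b}^{2})P_{b}^{3}+k_{n}^{5}P_{b}^{5}\Big)$$
for a constant $C=C(m)$.

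It then remains to check that $k_{n}$ times each of the three terms above tends to $0$. Using \eqref{lem:pbpm}, namely $P_{b}\le L(n)r_{n}^{2}/(n^{2}\vep_{n}^{\alpha})$ and $P_{m}\le L(n)m\,r_{n}/(n^{2}\vep_{n}^{\alpha})$, together with $k_{n}r_{n}=n$ from \eqref{eta exponent}, $\vep_{n}=n^{\beta}/b_{n}$ and $b_{n}^{\alpha}=n^{2}L(n)$, one first verifies that $P_{b}^{2}=o(P_{m})$ when $\eta>5/6$ and $\alpha\beta>4/3$, hence $P_{m}+P_{b}^{2}\le2P_{m}$ for large $n$. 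A direct substitution then shows that $k_{n}$ times the three terms equals a slowly varying function of $n$ times, respectively, $n^{4-3\alpha\beta}$, $n^{7-2\eta-4\alpha\beta}$ and $n^{10-4\eta-5\alpha\beta}$. Since $\beta>\tfrac{4}{3\alpha}$ (from \eqref{eq:betabounds}) forces $\alpha\beta>4/3$ and since $\eta>5/6$, all three exponents are strictly negative, so by the Potter bounds $k_{n}\P(\Omega_{n}^{(2)}\cap(\Omega_{n}^{(1)})^{\cc})\to0$, which together with the reduction above proves the lemma.

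The main obstacle is the combinatorial input in the second step. A naive bound that merely tracks three runs of unspecified length would bound each run's probability by $P_{b}$ and hence give $C\,k_{n}^{4}P_{b}^{3}$, whose exponent $6-2\eta-3\alpha\beta$ is negative only for $\eta>1$, which is excluded. One must instead use that when exactly three runs are present at least two of them necessarily have length two, so that two of the three factors are the much smaller $P_{m}$ (which counts exceedances in a strip of width $2m+1$ rather than in a block of width $r_{n}$); this is precisely what makes the critical exponent $4-3\alpha\beta$ negative under $\beta>\tfrac{4}{3\alpha}$, while the cases $b\le1$ are where the hypothesis $\eta>5/6$ is genuinely needed. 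A secondary, purely technical, point is to ensure that the runs selected in the union bound are separated by a full zero block, so that the independence afforded by $m$-dependence and $r_{n}\to\infty$ may legitimately be invoked.
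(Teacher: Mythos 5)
Your proof is correct and follows essentially the same route as the paper: after discarding $\Omega_n^{(1)}$ via Lemma~\ref{lem:3consecblocks}, your classification by the number of length-two runs is just a repackaging of the paper's partition of the index set $\calC$ into $\calC_0,\dots,\calC_4$ by counting adjacent pairs, and it produces the identical three terms $k_n^3(P_m+P_b^2)^2P_b$, $k_n^4(P_m+P_b^2)P_b^3$, $k_n^5P_b^5$ with the same exponents $4-3\alpha\beta$, $7-2\eta-4\alpha\beta$, $10-4\eta-5\alpha\beta$. The only cosmetic caveat is that "$P_b^2=o(P_m)$" should be read as "$P_b^2$ is negligible relative to the upper bound $L(n)mr_n n^{-\alpha\beta}$ used for $P_m$" (one only has upper bounds on $P_m$), which is what the substitution actually requires and which holds under your hypotheses.
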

\begin{proof}
	We denote $\calC:=\{(l_{1},l_{2},l_{3},l_{4},l_{5})\,\vert\,\, \text{for}\,\,\,\, 1\le l_{1}<l_{2}<l_{3}<l_{4}<l_{5}\le k_n \}$ and consider the disjoint union $\cup_{i=0}^{4}\calC_{i}=\calC$ described as follows: 
	\begin{align*}
	\calC_{0}= &\{(l_{1},l_{2},l_{3},l_{4},l_{5})\VT  l_{j}+1<l_{j+1}\text{ for all }j\} \\
	\calC_{1}= &\{(l_{1},l_{2},l_{3},l_{4},l_{5})\VT l_{j}+1=l_{j+1} \text{ for a single }j\} \\ 
	\calC_{2}= &\{(l_{1},l_{2},l_{3},l_{4},l_{5})\VT l_{j}+1=l_{j+1} \text{ for exactly two $j$'s}\} \\ 
	\calC_{3}= &\{(l_{1},l_{2},l_{3},l_{4},l_{5})\VT l_{j}+1=l_{j+1} \text{ for exactly three $j$'s}\}\\ 
	\calC_{4}= &\{(l_{1},l_{2},l_{3},l_{4},l_{5})\VT l_{j}+1=l_{j+1} \text{ for exactly four $j$'s}\}. 
	\end{align*}
	Recalling the definition of $E_l$ in \eqref{eq:ElFl}, also denote  $\Omega_{n}^{(2)}(l_{1},l_{2},l_{3},l_{4},l_{5}):= \bigcap_{i=1}^{5} E_{l_{i}}.$ We will consider the union of $\Omega_{n}^{(2)}(l_{1},l_{2},l_{3},l_{4},l_{5})$ over the various  $\calC_{0},\cdots,\calC_{4}$, in order to bound $\PP(\Omega_{n}^{(2)}).$
	
	First observe that at least three of the $l_{i}$'s are consecutive for any $(l_{1},\cdots,l_{5})$ in $\calC_{3}\cup\calC_{4}$, so that 
	\begin{equation*}
	\bigcup_{(l_{1},\cdots,l_{5})\in\calC_{3}\cup\calC_{4}}\Omega_{n}^{(2)}(l_{1},\cdots,l_{5})\subset \Omega_{n}^{(1)}.
	\end{equation*}
	Thus by Lemma \ref{lem:3consecblocks}, it suffices to bound the probability of the union over $\calC_{0},\calC_{1}$ and $\calC_{2}$. 
	\begin{itemize} 
		\item [(0)] For $(l_{1},l_{2},l_{3},l_{4},l_{5})\in\calC_{0},$  all blocks are independent so that
		$$\PP(\Omega_{n}^{(2)}(l_{1},l_{2},l_{3},l_{4},l_{5}))\leq \prod_{i=1}^{5}\PP(E_{l_{i}})\leq P_{b}^{5}.$$
		
		\item [(1)] Let $(l_{1},l_{2},l_{3},l_{4},l_{5})\in\calC_{1}$. Without loss of generality, we may assume that 		
		$l_{1}+1=l_{2}$. Then Lemma \ref{lem:consecblock} implies 
		$$\PP(\Omega_{n}^{(2)}(l_{1},l_{2},l_{3},l_{4},l_{5}))\leq \PP(E_{l_{1}}\cap E_{l_{2}}) \prod_{i=3}^{5}\PP(E_{l_{i}})\leq(P_{m}+P_{b}^{2}) P_{b}^{3}.$$ 
		
		\item [(2)] Let $(l_{1},l_{2},l_{3},l_{4},l_{5})\in\calC_{2}$.
			Since we have already taken care of events where there are 3 consecutive blocks, without loss of generality, we may assume that $l_{1}+1=l_{2}$, $l_{2}+1<l_{3}$, and $l_{3}+1=l_{4}$. From the above bound for two consecutive blocks we have	$$\PP(\Omega_{n}^{(2)}(l_{1},l_{2},l_{3},l_{4},l_{5}))\leq \PP(E_{l_{1}}\cap E_{l_{2}})\PP(E_{l_{3}}\cap E_{l_{4}}) \PP(E_{l_{5}})\leq (P_{m}+P_{b}^{2})^{2}P_{b}.$$
	\end{itemize} 
	To get a union bound, we estimate the number of indices in each portion of the partitioned index set. Then, the sum can be bounded by    
	\begin{equation*}
	\begin{split}
	\sum_{\calC} \PP(\Omega_{n}^{(2)}(l_{1},l_{2},l_{3},l_{4},l_{5}))
	&\le
	k_{n}^{5}P_{b}^{5} +k_{n}^{4}P_{m}P_{b}^{3}+k_{n}^{3}P_{m}^{2}P_{b}\\
		&\le L(n)\bigg( k_{n}^{5}r_{n}^{10} n^{-5\alpha\beta}+ k_{n}^{4} r_{n}^{7} n^{-4\alpha\beta}+k_{n}^{3} r_{n}^{4} n^{-3\alpha\beta}\bigg).
	\end{split}
	\end{equation*}
	Since $\frac{4}{3\alpha}<\beta$ and $\eta>5/6$, the \rhs~is $o(k_{n}^{-1})$ as $n\to\infty$.
\end{proof}



Our next lemma says that each block, which survives the $\vep_n$ truncation, has a small number of nonzero entries which all lie in some $(2m+1)\times(2m+1)$ square.

\begin{lemma}\label{lem:2m+1entriesintheblock} Let $\eta>5/6$.
	Denote by $\Omega_{n}^{(3)}$ the event that the first row of blocks in $\hA^{>\vep_{n}}$ has a block such that all the nonzero entries in the block do not fit inside some $(2m+1)\times(2m+1)$ square. Then $k_{n}\PP(\Omega_{n}^{(3)}) \to 0$ as $n\to\infty$.
\end{lemma}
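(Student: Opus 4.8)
The plan is to reduce $\Omega_n^{(3)}$ to two simpler events — that the (unique) diagonal block of the first block-row survives the $\vep_n$-truncation, or that some off-diagonal block of that row contains two \emph{independent} large entries lying far apart — and then to dispatch both by union bounds, in the spirit of Lemmas~\ref{lem:En}, \ref{lem:2blocks} and \ref{lem:3consecblocks}.

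The first step is an elementary geometric fact: a finite set $S\subseteq\ZZ^2$ fits inside some $(2m+1)\times(2m+1)$ square if and only if the ranges of the first and of the second coordinates of $S$ are both at most $2m$; hence if $S$ does not fit, then $S$ contains two points differing by more than $2m$ in one coordinate. The nonzero entries of the $\vep_n$-truncated block $\hB_{1l}^{>\vep_n}$ are exactly the positions with $|\h X_{ij}|>b_n\vep_n=n^\beta$. Therefore, on $\Omega_n^{(3)}$, either (a) $\|\hB_{11}\|_{\max}>\vep_n$ for the diagonal block $\hB_{11}$, or (b) there is an index $2\le l\le k_n$ and two positions $(i,j),(k,\ell)$ in the (strictly off-diagonal) block $\hB_{1l}$ with $|X_{ij}|>n^\beta$, $|X_{k\ell}|>n^\beta$ and $\max\{|i-k|,|j-\ell|\}>2m$. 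In case (b) all entries of $\hB_{1l}$ lie above the diagonal, so $\h X_{ij}=X_{ij}$ there and the reflection causes no ``folding''; thus the two positions are genuinely separated by more than $2m\ge m$ in $\ZZ^2$, and $m$-dependence makes $X_{ij}$ and $X_{k\ell}$ independent, whence $\PP(|X_{ij}|>n^\beta,\,|X_{k\ell}|>n^\beta)=\PP(|X|>n^\beta)^2$.

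I would then bound the two pieces. For (a), $\PP(\|\hB_{11}\|_{\max}>\vep_n)=P_b\le L(n)\,r_n^2\,n^{-\alpha\beta}$, by \eqref{lem:pbpm} together with the identity $n^2\vep_n^\alpha=n^{\alpha\beta}L(n)$ (which follows from $\vep_n=n^\beta/b_n$ and $b_n^{-\alpha}=n^{-2}L(n)$). For (b), the union bound over the $k_n$ blocks of the row and over the at most $r_n^4$ pairs of positions inside a block, combined with the regular variation $\PP(|X|>n^\beta)=n^{-\alpha\beta}L(n)$ from \eqref{regvar}, gives that piece at most $k_n r_n^4 n^{-2\alpha\beta}L(n)$. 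Substituting $k_n=n^\eta$ and $r_n=n^{1-\eta}$ from \eqref{eta exponent},
\begin{equation*}
\PP\big(\Omega_n^{(3)}\big)\ \le\ L(n)\,n^{\,2-2\eta-\alpha\beta}\;+\;L(n)\,n^{\,4-3\eta-2\alpha\beta}\ =\ L(n)\,n^{-\eta}\Big(n^{\,2-\eta-\alpha\beta}+n^{\,2(2-\eta-\alpha\beta)}\Big).
\end{equation*}

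It remains to observe that $2-\eta-\alpha\beta<0$: this is exactly where the hypotheses enter, since $\eta>5/6$ and $\alpha\beta>\tfrac43$ (by $\beta>\tfrac{4}{3\alpha}$ in \eqref{eq:betabounds}) give $\eta+\alpha\beta>\tfrac{13}{6}>2$. Consequently the bracket above is $o(1)$ — here the Potter bounds absorb the slowly varying factor — so $\PP(\Omega_n^{(3)})=o(n^{-\eta})=o(k_n^{-1})$ and $k_n\PP(\Omega_n^{(3)})\to0$. I do not anticipate a genuine obstacle; the one point that needs care is the geometric reduction of step two, in particular isolating the diagonal block $\hB_{11}$ — where the symmetrizing reflection could fold a single large cluster into two far-apart positions — so that the independent-pair estimate is applied only to strictly-off-diagonal blocks.
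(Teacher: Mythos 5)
Your proof is correct, but it takes a genuinely different route from the paper's. The paper first conditions on the complements of $\Omega_n^{(1)}$ and $\Omega_n^{(2)}$ (Lemmas \ref{lem:3consecblocks} and \ref{lem:5blocks}), so that at most four blocks in the row are nonzero; it then anchors each surviving block at the position of its maximal entry and observes that the bad event forces \emph{one} further large entry at distance more than $m$ from that anchor, which costs only $r_n^2\,\PP(|X|>n^\beta)$ per block, giving $4k_n r_n^2 n^{-\alpha\beta}L(n)=L(n)n^{2-\eta-\alpha\beta}\to0$ after the extra factor $k_n$. You instead avoid the reduction to four blocks entirely and union-bound over all $k_n$ blocks and all $\le r_n^4$ pairs of positions, paying for the larger combinatorial factor with the \emph{squared} tail probability $\PP(|X|>n^\beta)^2$, which is legitimate precisely because your geometric reduction produces two positions separated by more than $2m\ge m$ and hence two genuinely independent entries. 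The exponent count $n^{\eta}\cdot n^{4-3\eta-2\alpha\beta}=n^{2(2-\eta-\alpha\beta)}$ with $\eta+\alpha\beta>\tfrac{13}{6}>2$ closes the argument. Two remarks. First, your version is self-contained: it does not invoke Lemmas \ref{lem:3consecblocks} and \ref{lem:5blocks}, nor the slightly delicate conditional-expectation step $\E(\indicator_{G_l}\mid(\Omega_n^{(1)})^{\cc}\cap(\Omega_n^{(2)})^{\cc})$ that the paper uses; note that some such device is genuinely needed, since the naive bound $k_n^2 P_b\asymp n^{2-\alpha\beta}$ without either the four-block reduction or the product structure would not tend to zero. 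Second, your separate treatment of the diagonal block is a real improvement in care: the symmetrized field on $\hB_{11}$ is not $m$-dependent as a function of position (reflection can identify far-apart entries), so the independent-pair estimate would fail there, and the crude bound $k_nP_b\le L(n)n^{2-\eta-\alpha\beta}\to0$ is exactly the right fix; the paper's proof does not explicitly address this point.
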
	
\begin{proof}

	Defining 
\begin{equation*}
G_{l}:=\{\hB_{1l}^{>\vep_n} \text{ has nonzero entries that do not fit inside some square of size } (2m+1)\times (2m+1)\},
\end{equation*}
it suffices to prove that the following probability tends to zero:
\begin{equation*}
k_n\PP(\Omega_{n}^{(3)}\cap {\Omega_{n}^{(1)}}^{\cc}\cap {\Omega_{n}^{(2)}}^{\cc})\le k_{n}\sum_{l=1}^{k_n}\E(\indicator_{G_{l}}|{\Omega_{n}^{(1)}}^{\cc}\cap {\Omega_{n}^{(2)}}^{\cc})\, \PP({\Omega_{n}^{(1)}}^{\cc}\cap {\Omega_{n}^{(2)}}^{\cc}) .
\end{equation*} 
Note that the sum on the right-side, while over all $1\le l\le k_n$, has only four nonzero terms a.s. since $\indicator_{G_{l}}$ is a.s. zero for all but at most four of the $l$ indices. We next consider a given nonzero block $\hB_{1l}$ in $\hA^{>\vep_{n}}$ and denote by $(\h i ,\h j)$ the (in lexicographic order) minimal index of $\mathcal{O}$, where 
$$\mathcal{O}:=\{(i^{'},j^{'})\in \mathcal{I}_{l}\,\vert\, \h A^{>\vep_{n}}_{i^{'}j^{'}} =\max_{(i,j)\in \mathcal{I}_{l}} |\h A^{>\vep_{n}}_{ij}|\}$$
and $\mathcal{I}_{l}:=\{(i,j) \,:\, 1\le i\le {r_{n}}, (l-1)r_{n} {<} j\le lr_{n} \}.$
Consider the entries $\{\h A_{ij}\}_{(i,j)\in \tilde{\mathcal{I}}_{l}}$,  where $$\tilde{\mathcal{I}}_{l}:=\mathcal{I}_{l}\cap \{(i,j)\,:\, \max\{|i-\h i|,|j-\h j|\}>m \}.$$ 
It is important to note that after removal of entries within distance $m$ from positions $(\h i, \h j)$ the entries of $\hB_{1l}^{>\vep_n}$ are still identically distributed.
By $m$-dependence and stationarity, we have   
\begin{align*}
\PP\{\exists (i,j)\in  \tilde{\mathcal{I}}_{l}: |\h A_{ij}|> \vep_{n}\} &\le r_{n}^{2}\PP(|\h A_{ij}|> \vep_{n} )	\le \frac{L(n)r_{n}^{2}}{n^{2}\vep_{n}^{\alpha}}\,.
\end{align*}	
Putting things together we have 
\begin{equation*}
k_{n}\sum_{l=1}^{k_n}\E(\indicator_{G_{l}}|{\Omega_{n}^{(1)}}^{\cc}\cap {\Omega_{n}^{(2)}}^{\cc})\le L(n)4k_{n}r_{n}^{2}n^{-2}(\vep_{n})^{-\alpha}\le L(n)n^{2-\eta-\alpha\beta} \to 0, \qquad \nto\,.
\end{equation*}
\end{proof}

We will now introduce another decomposition so that we are left with at most one block whose  max norm is nonzero in any given row or column. To this end, set
\begin{align}\label{eq:2nd trunc}
{\hA^{(2)}}:={\hA^{>\vep_n}}-{\hA^{>\tvep_{n}}}.
\end{align}
We get the following result.
\begin{lemma}\label{lem:A(2)norm}
	If $\eta>5/6$, we have $\norm{\hA^{(2)}}\cip 0$ as $n\to\ff$.
\end{lemma}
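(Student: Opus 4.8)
The plan is to exploit the extreme sparsity of $\hA^{(2)}=\hA^{>\vep_n}-\hA^{>\tvep_n}$. By construction this matrix is symmetric, and each of its nonzero entries has absolute value in $(\vep_n,\tvep_n]$. The crucial extra input, supplied by the three preceding block-counting lemmas, is that on an event of probability tending to $1$ every row of $\hA^{(2)}$ contains only a bounded number (depending only on $m$) of nonzero entries. Once this is in hand, the spectral norm is controlled by the maximal absolute row sum, which is $O(\tvep_n)$, and $\tvep_n=b_n^{(\kappa-1)/2}\to0$.

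First I would introduce the event $G_n$ on which every block-row and every block-column of $\hA^{>\vep_n}$ contains at most four nonzero blocks, and moreover the nonzero entries of each nonzero block are contained in some $(2m+1)\times(2m+1)$ square. Since $\hA^{(2)}$ has no more nonzero blocks than $\hA^{>\vep_n}$, and within each block its nonzero entries form a subset of those of $\hA^{>\vep_n}$, the same structural description applies to $\hA^{(2)}$ on $G_n$. By stationarity of $\bsX^{\infty}$ and the symmetry $\h X_{ij}=\h X_{ji}$, the probability that a given block-row (or block-column) of $\hA^{>\vep_n}$ contains at least five nonzero blocks, respectively a block whose nonzero entries do not fit in a $(2m+1)\times(2m+1)$ square, is $o(k_n^{-1})$ uniformly --- this is exactly what the proofs of Lemmas~\ref{lem:5blocks} and \ref{lem:2m+1entriesintheblock} establish (both under $\eta>5/6$; the factor $k_n$ in those statements is tailored to precisely such a union bound). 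A union bound over the $k_n$ block-rows and $k_n$ block-columns then gives $\PP(G_n^{\cc})\to0$ as $\nto$.

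Next, working on $G_n$, I would bound $\norm{\hA^{(2)}}$ by the maximal absolute row sum. Fix a matrix-row index $i$; it lies in a single block-row, which contains at most four nonzero blocks of $\hA^{(2)}$, and in each of these the nonzero entries, being confined to one $(2m+1)\times(2m+1)$ square, occupy at most $2m+1$ columns. Hence row $i$ of $\hA^{(2)}$ has at most $4(2m+1)$ nonzero entries, each of modulus at most $\tvep_n$, so $\max_i\sum_j|\hA^{(2)}_{ij}|\le 4(2m+1)\tvep_n$ on $G_n$. As $\hA^{(2)}$ is symmetric, the $\ell^1$--$\ell^\infty$ interpolation bound $\norm{\hA^{(2)}}\le\sqrt{\|\hA^{(2)}\|_1\,\|\hA^{(2)}\|_\infty}$ (equivalently Gershgorin's theorem), together with the equality of the maximal absolute column and row sums, yields $\norm{\hA^{(2)}}\le 4(2m+1)\tvep_n$ on $G_n$. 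Since $\tvep_n\to0$, for every $\delta>0$ we obtain $\PP(\norm{\hA^{(2)}}>\delta)\le\PP(G_n^{\cc})+\indicator\{4(2m+1)\tvep_n>\delta\}\to0$, i.e.\ $\norm{\hA^{(2)}}\cip0$.

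The step I expect to be the main (and essentially only) subtlety is the choice of matrix norm. The Frobenius-norm estimate used earlier for $\hA^{<\vep_n}$ is far too lossy here: the total number of nonzero entries of $\hA^{(2)}$ is of order $k_n=n^\eta$, so it only gives $\norm{\hA^{(2)}}^2\le\|\hA^{(2)}\|_F^2=O(n^\eta\tvep_n^2)$, and $n^\eta\tvep_n^2\to0$ would force $\kappa<1-\alpha\eta/2$, which is incompatible with $\kappa>\eta>5/6$. One must instead use the row-sparsity directly through the Schur test. Everything else is bookkeeping with Lemmas~\ref{lem:5blocks} and \ref{lem:2m+1entriesintheblock} (and, implicitly, Lemma~\ref{lem:3consecblocks}, on which the latter depends).
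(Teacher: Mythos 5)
Your proposal is correct and follows essentially the same route as the paper: define the good event by intersecting, over all $k_n$ block-rows, the complements of the events in Lemmas~\ref{lem:3consecblocks}, \ref{lem:5blocks} and \ref{lem:2m+1entriesintheblock} (whose $k_n\PP(\cdot)\to 0$ form is exactly designed for this union bound), and on that event bound the spectral norm by the maximum absolute row sum, giving $\norm{\hA^{(2)}}\le 4(2m+1)\tvep_n\to 0$. Your side remark that the Frobenius bound is too lossy here is accurate but not needed; the paper's proof is otherwise identical.
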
	
\begin{proof}
Extend the events	${\Omega_{n}^{(1)}}$, ${\Omega_{n}^{(2)}}$, and  ${\Omega_{n}^{(3)}}$ to each row $1\le i\le k_n$, and denote these events by ${\Omega_{n}^{(\cdot)}}(i)$. Define $$\Omega_{n}:=\bigcap_{i=1}^{k_n}\left({\Omega_{n}^{(1)}}^{\cc}(i)\cap {\Omega_{n}^{(2)}}^{\cc}(i)\cap {\Omega_{n}^{(3)}}^{\cc}(i)\right).$$   By Lemma \ref{lem:3consecblocks}, Lemma \ref{lem:5blocks} and Lemma \ref{lem:2m+1entriesintheblock}, the probability of $\Omega_n$ tends to 1 as $\nto$. On the event  $\Omega_{n}$, using the fact that the entries of $\hA^{(2)}$ are bounded by $\tvep_{n}$, we bound the spectral norm by the maximum row-sum norm to obtain
	\begin{align*}
		\|\hA^{(2)}\| &\le  4 (2m+1) \tvep_{n} \to 0\,, \qquad \nto\,.
	\end{align*}
\end{proof}



\subsubsection{Proof of Theorem \ref{tm:ReflMat} for $2\le \alpha<4$}
Choose $\eta>5/6$. We recall from \eqref{eq:conv to ppp} that
$\sum_{k=1}^{k_n}\sum_{l=k+1}^{k_n} \delta_{\hB_{kl}}  
\dto \sum_{i=1}^\infty\delta_{ (P_i\bsQ_{i}) }$, as $n \to \infty$.
Since $\tvep_{n}\to 0$ as $n\to\infty$, this implies
\begin{align}\label{eq:truncated matrix to ppp}
\sum_{k=1}^{k_n} \sum_{l=k+1}^{k_n}  \delta_{\hB_{kl}^{>\tvep_{n}}} \;  
\dto \quad \sum_{i=1}^\infty\delta_{ (P_i\bsQ_{i}) }\,,\quad \nto \;.
\end{align}
{Similar to the case $\alpha<2$, identify $\tilde{\ell}_{0,2m}$  with the space of  $(2m+1)\times (2m+1)$ matrices so that 
taking singular values of elements in $\tilde{l}_{0,2m}$ corresponds to taking
singular values in the space of $(2m+1)\times (2m+1)$ matrices.}

By Lemma \ref{lem:2m+1entriesintheblock}, we have that for any $\delta\in(0,1)$, there exists $N$ such that $n>N$ implies that for all $(k,l)$ we have
$\hB_{kl}^{>\tvep_n}$ is in 
$\tilde{l}_{0,2m}$ with probability $1-\delta$.
Moreover, $P_i\bsQ_{i}$ is also in 
	$\tilde{l}_{0,2m}$ for every $i$, a.s. (in fact, it is in 	$\tilde{l}_{0,m}$).

{Thus, as a marked point process we have
\begin{align*}
\sum_{k=1}^{k_n} {\sum_{l=k+1}^{k_n}} \sum_{j=1}^{2m+1}
{\delta_{\sigma_j(\hB_{kl}^{>\tvep_{n}})}} \;  
\dto \quad \sum_{i=1}^\infty \sum_{j=1}^{m+1} \delta_{ \sigma_j(P_i\bsQ_{i}) } \;.
\end{align*}
By Lemma \ref{lem:factor22}, with probability going to $1$, the positive eigenvalues of $\hA^{>\tvep_{n}}$ are the singular values of the $\hB_{kl}^{>\tvep_{n}}$. Hence, the above convergence also holds for the eigenvalues of $\hA^{>\tvep_{n}}$. It remains to show that the same convergence holds for the eigenvalues of $\hA$.} 
Similar to the case of $\alpha<2$, we employ Weyl's inequality to get
\begin{equation*}
\max_i |\lambda_{i} ({\hA}) - \lambda_{i} (\hA^{>\tvep_{n}})|
\leq \|{\hA^{<\vep_n}} \|+\|\hA^{(2)}\| \, ,
\end{equation*}
and by Proposition \ref{lem:aboundofallevenpaths} and Lemma \ref{lem:A(2)norm}, the right hand side goes to 0 in probability as $n\to\ff$.
For the same arguments given in the $\alpha<2$ case below Remark \ref{rem:beforeproof}, this concludes the proof of Theorem \ref{tm:ReflMat} for $2\le \alpha <4$.


\section{Proof of Theorem \ref{tm:CovMat}}\label{sec:proofcov}

We group the entries of the $p\times n$ matrix $\bsA$ into blocks of size
$r_n\times r_n$ (assuming $n = r_n  k_n$ and $p=r_n {\t k_n}$ without loss of generality). If $\bsB_{kl}$ denotes the $kl$ block of $\bsA$, one can check that the $kl$ block of $\bsA {\bsA}'$ takes the form
\[
\sum_{i=1}^{k_n} \bsB_{ki} {\bsB_{li}}'\,.
\]



\subsection{Sample covariance matrices : $0<\alpha<2$}
We use the truncation defined in \eqref{eq:truncate} 
and set 
\begin{equation*}
\ccC :=\bsA{\bsA}',\quad  \ccC_{1}:=\bsA^{<\varepsilon}{\bsA^{<\varepsilon}}' ,\quad\text{and}\quad \ccC_{2}:=\ccC-\ccC_{1}.
\end{equation*}
Recalling \eqref{eq:frob+markov}, for any $\delta>0$, we have $$\lim_{\vep\to 0}\limsup_{n\to\infty}\PP\left(\|\ccC_{1}\|>\delta\right)= 0.$$ We observe that 
$\ccC_{2}=\bsA^{<\varepsilon}{\bsA^{>\varepsilon}}'+\bsA^{>\varepsilon}{\bsA^{<\varepsilon}}'+\bsA^{>\varepsilon}{\bsA^{>\varepsilon}}'$.
Let  us consider the event that $\bsA^{>\varepsilon}$ has all diagonal blocks equal to zero and at most one nonzero block in each row and column. By the argument of Lemma \ref{lem:En},  the probability of that event goes to $1$. On this event, the diagonal entries of $\bsA^{<\varepsilon}{\bsA^{>\varepsilon}}'$ are also zero. We next show that the off-diagonal blocks of $\bsA^{<\varepsilon}{\bsA^{>\varepsilon}}'$  are negligible with probability going to one, as $\varepsilon$ goes to zero. 

\begin{lemma} \label{lem:offdiagonal} For any $\delta>0,$ 
	\begin{equation}\label{eq:offdiagonal}
	\lim_{\vep\to 0}\limsup_{n\to\infty}\PP\left(\| \bsA^{<\varepsilon}{\bsA^{>\varepsilon}}'\|>\delta\right)= 0.
	\end{equation}
\end{lemma}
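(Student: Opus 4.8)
The plan is not to estimate $\|\bsA^{<\vep}(\bsA^{>\vep})'\|$ head‑on---that would be hard, since $\|\bsA^{>\vep}\|$ does \emph{not} vanish (that factor carries the entire limiting point process)---but to decouple the product by submultiplicativity of the spectral norm and exploit that only \emph{one} of the two factors has to be small. Namely, I would start from
\[
\|\bsA^{<\vep}(\bsA^{>\vep})'\| \;\le\; \|\bsA^{<\vep}\|\,\|(\bsA^{>\vep})'\| \;=\; \|\bsA^{<\vep}\|\,\|\bsA^{>\vep}\|\,,
\]
together with the identity $\|\bsA^{<\vep}\|^{2}=\|\bsA^{<\vep}(\bsA^{<\vep})'\|=\|\ccC_1\|$, which by the $\ccC_1$‑estimate recorded just above the lemma (coming from \eqref{eq:frob+markov} and Karamata's theorem) satisfies $\lim_{\vep\to0}\limsup_{\nto}\PP(\|\ccC_1\|>\eta)=0$ for every $\eta>0$. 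The only remaining ingredient is a tightness bound for $\|\bsA^{>\vep}\|$ that is \emph{uniform in $\vep$}; for this I would simply discard the truncation and use $\|\bsA^{>\vep}\|\le\|\bsA^{>\vep}\|_F\le\|\bsA\|_F$, a bound that no longer involves $\vep$.

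Concretely, introducing a free parameter $K>0$, one has for every $\delta>0$
\[
\PP\big(\|\bsA^{<\vep}(\bsA^{>\vep})'\|>\delta\big)\;\le\;\PP\big(\|\ccC_1\|>(\delta/K)^2\big)+\PP\big(\|\bsA\|_F>K\big).
\]
Taking $\limsup_{\nto}$ and then $\limsup_{\vep\to0}$ annihilates the first term, leaving $\limsup_{\vep\to0}\limsup_{\nto}\PP(\|\bsA^{<\vep}(\bsA^{>\vep})'\|>\delta)\le\limsup_{\nto}\PP(\|\bsA\|_F>K)$; since $K$ is arbitrary, the lemma reduces to showing that $\limsup_{\nto}\PP(\|\bsA\|_F>K)\to0$ as $K\to\infty$. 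To see this I would write $\|\bsA\|_F^2=a_{np}^{-2}\sum_{i\le p,\,j\le n}X_{ij}^2$ and split on whether some $|X_{ij}|$ exceeds $Ka_{np}$: that event has probability at most $pn\,\PP(|X|>Ka_{np})\to K^{-\alpha}$ by regular variation and \eqref{eq:defan}, while on its complement $\|\bsA\|_F^2=a_{np}^{-2}\sum_{ij}X_{ij}^2\1{|X_{ij}|\le Ka_{np}}$ has expectation $pn\,a_{np}^{-2}\,\EE[X^2\1{|X|\le Ka_{np}}]\sim\frac{\alpha}{2-\alpha}K^{2-\alpha}$ by Karamata's truncated second‑moment theorem (valid since $\alpha<2$), so Markov's inequality gives $\limsup_{\nto}\PP(\|\bsA\|_F^2>K^2)\le(1+\frac{\alpha}{2-\alpha})K^{-\alpha}\to0$. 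The transposed term $\bsA^{>\vep}(\bsA^{<\vep})'$ has the same spectral norm and is handled identically.

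The only genuine obstacle here is conceptual rather than computational: one must resist the temptation to bound $\|\bsA^{>\vep}\|$ or the cross term sharply, and instead realize that a crude, $\vep$‑free tightness estimate for $\|\bsA\|_F$, combined with the decoupling above and the already‑established smallness of $\|\ccC_1\|$, is all that is needed. As an alternative one could run the argument on the high‑probability event introduced before the lemma---on which $\bsA^{>\vep}$ has at most one nonzero block per row and column, so that $\|\bsA^{>\vep}\|$ is the maximum of the norms of the (of order $\vep^{-\alpha}$ many) surviving blocks, each of which is an $(m+1)\times(m+1)$‑type matrix controlled by $\|\bsA\|_{\max}$---but the Frobenius estimate is shorter and entirely sufficient.
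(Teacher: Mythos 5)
Your proof is correct, and its skeleton is the same as the paper's: bound $\|\bsA^{<\vep}(\bsA^{>\vep})'\|$ by submultiplicativity, kill the first factor via the already-established $\ccC_1$ estimate ($\|\bsA^{<\vep}\|^2=\|\ccC_1\|$), and split with a free parameter so that only tightness of the second factor remains. Where you diverge is in that last ingredient. The paper passes to the maximal block $\bsB_{i^*j^*}$ of $\bsA^{>\vep}$ and invokes $\limsup_n\PP(\|\bsB_{i^*j^*}\|>M)\to 0$ as $M\to\infty$, which implicitly leans on the block structure and the high-probability event that $\bsA^{>\vep}$ has at most one nonzero block per row and column (and is, as written, a little terse about why $\|\bsA^{>\vep}\|$ is controlled by that single block). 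You instead discard the truncation entirely and prove tightness of $\|\bsA\|_F$ directly: the union bound gives $pn\,\PP(|X|>Ka_{np})\to K^{-\alpha}$, and Karamata's truncated second-moment theorem (valid since $\alpha<2$) plus Markov handles the complement, yielding a bound of order $K^{-\alpha}$ that is uniform in $\vep$. This is more elementary and self-contained — it needs no point-process input and no block-structure event — at the cost of being specific to $\alpha<2$ (the Frobenius bound on the full matrix would be useless for $\alpha\ge 2$, which is why the paper's block-based tightness is the version that generalizes to the second half of the proof). Both routes are valid here; yours closes a small gap in rigor in the paper's one-line tightness claim.
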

\begin{proof}
	Define $(i^*,j^*)$ such that the maximal entry of $\bsA^{>\varepsilon}$, in absolute value, is attained in the block $\bsB_{i^{*}j^{*}}$.
	For any $M>0$,
	we have
	\begin{align*}
	\lim_{\vep\to 0}\limsup_{n\to\infty}\PP{\left(\|\bsA^{<\varepsilon}{\bsA^{>\varepsilon}}'\|>\delta\right)}
	&\leq \lim_{\vep\to 0}\limsup_{n\to\infty}\PP\left(\|\bsA^{<\varepsilon}\|\|\bsB_{i^{*}j^{*}}\|>\delta\right)\\
	&\leq\lim_{\vep\to 0}\limsup_{n\to\infty} \PP(\|\bsA^{<\vep}\|>\frac{\delta}{M})+\PP(\|\bsB_{i^{*}j^{*}}\|>M)\\
	&\leq\limsup_{n\to\infty}\PP(\|\bsB_{i^{*}j^{*}}\|>M).
	\end{align*}
	But now, the right side goes to $0$ as $M\to\infty$.
\end{proof}
Weyl's inequality yields that
\begin{equation*}
\max_{i}\left|\lambda_{i}\left(\bsA{\bsA}'\right)-\lambda_{i}\left(\bsA^{>\vep}{\bsA^{>\vep}}'\right)\right|\leq \|\bsA^{<\vep}{\bsA^{<\vep}}'\|+\|\bsA^{<\vep}{\bsA^{>\vep}}'\|+\|\bsA^{>\vep}{\bsA^{<\vep}}'\| \to 0.
\end{equation*} 
Thus, the eigenvalues of $\bsA{\bsA}'$ are asymptotically determined by the non-zero diagonal blocks of $\bsA^{>\vep}{\bsA^{>\vep}}'$ which are, with probability going to one, blocks of the form  $\bsB_{kl}^{>\vep} {\bsB_{kl}^{>\vep}}'$ where $\bsB_{kl}^{>\vep}$ is a non-zero block of $\bsA^{>\vep}$. Repeating the arguments below Remark \ref{rem:beforeproof} concludes the proof in the sample covariance setting with $\alpha\in (0,2)$. 

\subsection{Sample covariance matrices : $2\leq\alpha<4$ }
Similar to the case $0<\alpha<2$, we set
\begin{equation*}
\ccC =\bsA{\bsA}',\quad  \ccC_{1}=\bsA^{<\varepsilon_{n}}{\bsA^{<\varepsilon_{n}}}' ,\quad\text{and}\quad \ccC_{2}=\ccC-\ccC_{1}
\end{equation*}
where $\varepsilon_{n}=\frac{n^{\beta}}{a_{np}}$ with $\beta$ satisfying \eqref{eq:betabounds}. 
First, we show that $\ccC_1$ does not contribute to the spectrum in the limit.

\begin{lemma}
	\label{lem:2<alpha<4offdiagonal2} For any $\delta>0,$ 
	\begin{equation}\label{eq:2<alpha<4offdiagonal2}
	\lim_{n\toi}\PP\left(\|\ccC_1\|>\delta\right)= 0.
	\end{equation}
\end{lemma}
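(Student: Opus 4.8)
Since $\ccC_1=\bsA^{<\varepsilon_n}(\bsA^{<\varepsilon_n})'$ is positive semidefinite with $\|\ccC_1\|=\sigma_1(\bsA^{<\varepsilon_n})^2=\|\bsA^{<\varepsilon_n}\|^2$, it suffices to prove $\|\bsA^{<\varepsilon_n}\|\cip0$. The plan is to reduce this to Proposition~\ref{lem:aboundofallevenpaths}. Consider the $(n+p)\times(n+p)$ Hermitian matrix
\[
\mathcal{H}_n:=\begin{pmatrix}\bsO & \bsA^{<\varepsilon_n}\\ (\bsA^{<\varepsilon_n})' & \bsO\end{pmatrix},
\]
whose nonzero eigenvalues are $\pm\sigma_i(\bsA^{<\varepsilon_n})$, so $\|\mathcal{H}_n\|=\|\bsA^{<\varepsilon_n}\|$. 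Writing $\h X^{<}_{ij}:=X_{ij}\indicator_{\{|X_{ij}|<n^{\beta}\}}$, the entries of the off-diagonal blocks of $\mathcal{H}_n$ are $\h X^{<}_{ij}/a_{np}$, so $\mathcal{H}_n$ is, up to zero-padding, a symmetric matrix whose entries form an $m$-dependent array of exactly the type analyzed in Section~\ref{sec:proofwigner}; by Lemma~13 of \cite{auffinger:arous:peche:2009} and the Potter bounds we may again assume $\EE[\h X^{<}_{11}]=0$ (here $\EE X_{11}=0$ enters). Since $p/n\to\gamma\in(0,\infty)$, one has $a_{np}/a_{n^2}\to\gamma^{1/\alpha}$, so the truncation level $\varepsilon_n=n^{\beta}/a_{np}$ agrees, up to a constant, with the level used in the Wigner proof, and the constraints \eqref{eq:betabounds} on $\beta$ and \eqref{eq: eps choice} on $\vep$ carry over unchanged.

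Next I would run the moment method as in \eqref{eq:frobenius2}: for a slowly varying integer sequence $s_n>C\log n$ and $\mathcal{H}_n^{\mathrm{int}}:=a_{np}\,\mathcal{H}_n$ (the array with entries $\h X^{<}_{ij}$),
\[
\PP(\|\mathcal{H}_n\|>\delta)\le\Big(\tfrac{n^{2/\alpha-\vep}}{a_{np}\delta}\Big)^{2s_n}\EE\Big[\mathrm{Tr}\big(\tfrac{1}{n^{2/\alpha-\vep}}\mathcal{H}_n^{\mathrm{int}}\big)^{2s_n}\Big].
\]
Expanding the trace over closed paths of length $2s_n$ in $\{1,\dots,n+p\}$, the block structure forces such paths to alternate between the ``row vertices'' $\{1,\dots,p\}$ and the ``column vertices'' $\{p+1,\dots,p+n\}$, i.e.\ the familiar bipartite expansion for sample covariance matrices. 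From here every step of Section~\ref{sec:proofwigner} carries over: the collapsing map $\psi$ identifying vertices within distance $m$ (applied within each vertex class) together with Lemma~\ref{lem:newpathinq} reduces the estimate to $m$-separated paths in $\mathfrak{P}_m$; Lemma~\ref{lem:count} gives $|\psi^{-1}(\mathcal P')|\le(m+1)^{2s_n}(2s_n)!$; and on $\mathfrak{P}_m$ the entries in each product are independent, so the sum is controlled by the i.i.d.\ sample covariance moment estimate of \cite{auffinger:arous:peche:2009} — the analogue of \eqref{eq:ze bound} with $n^2$ replaced by $np$ and $n$ by $n+p$, which changes nothing since $np\asymp n^2$ and $n+p\asymp n$. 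One thereby obtains, for a suitable constant $c=c(\gamma)$, a large enough slack parameter, and some $\eta>0$, that the normalized trace is at most $\big((m+1)c\big)^{2s_n}\exp(-\eta s_n)$.

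Combining the two displays,
\[
\PP(\|\ccC_1\|>\delta)=\PP\big(\|\mathcal{H}_n\|>\sqrt\delta\,\big)\le\Big(\tfrac{n^{2/\alpha-\vep}(m+1)c}{a_{np}\sqrt\delta}\Big)^{2s_n}\exp(-\eta s_n)\longrightarrow0,\qquad\nto,
\]
because $n^{2/\alpha-\vep}/a_{np}\to0$. This yields $\|\bsA^{<\varepsilon_n}\|\cip0$ and hence $\|\ccC_1\|\cip0$, which is \eqref{eq:2<alpha<4offdiagonal2}. As in the Wigner case, the one genuinely substantial ingredient is the moment bound for $\mathrm{Tr}(\mathcal{H}_n^{\mathrm{int}})^{2s_n}$ — the combination of the $\psi$-reduction, Lemma~\ref{lem:count}, and the i.i.d.\ path estimates; the bipartite structure and the switch of normalization from $a_{n^2}$ to $a_{np}$ add only bookkeeping, precisely because $p\asymp n$. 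One may equivalently skip the embedding and apply the moment method directly to $\mathrm{Tr}\big((\bsA^{<\varepsilon_n}(\bsA^{<\varepsilon_n})')^{s_n}\big)$ with the same bipartite path count; the two routes are equivalent.
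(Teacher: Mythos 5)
Your proof is correct and takes essentially the same route as the paper's: a trace--moment bound for the truncated matrix, reduced via the $\psi$-collapse (Lemmas \ref{lem:newpathinq} and \ref{lem:count}) to the i.i.d.\ bipartite path estimates of \cite{auffinger:arous:peche:2009} (their Proposition~20), concluded with Markov's inequality and $n^{2/\alpha-\vep}/a_{np}\to 0$. The only, purely cosmetic, difference is your use of the Hermitian dilation of $\bsA^{<\vep_n}$; the paper applies the moment method directly to $\mathrm{Tr}\big((\bsX^{<}{\bsX^{<}}')^{s_n}\big)$, which, as you note yourself, is the same bipartite path expansion.
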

\begin{proof}
	Following the argument from \cite[p.609]{auffinger:arous:peche:2009}, for truncated matrices, we have 
	\begin{equation*}
	\EE \left( \mathrm{Tr} \hspace{1pt} \(\frac{\bsX^{<}{\bsX^{<}}'}{n^{4/\alpha-\vep}} \)^{s_n}   \right)=\frac{1}{n^{s_n(4/\alpha-\vep)}} \sum_{\mathfrak P} \EE  X^{<}_{i_1i_0} X^{<}_{i_1i_2} X^{<}_{i_3i_2}X^{<}_{i_3i_4} \cdots  X^{<}_{i_{2s_n-1}i_{2s_n-2}} X^{<}_{i_{2s_n-1}i_0}\,,
	\end{equation*}
	where $\mathfrak{P}$ denotes the set of all closed paths $\mathcal{P}=\{i_0,i_1,\dots ,i_{2s_n-1},i_0\}$ with a distinguished origin.
	Recalling the number of maps from $\mathcal{P}$ to $\mathcal{P}^{\prime}$  under $\psi$, 
	\begin{align*}
	\EE \left( \mathrm{Tr} \hspace{1pt} \(\frac{\bsX^{<}{\bsX^{<}}'}{n^{4/\alpha-\vep}} \)^{2s_n}   \right) &\leq 2 (m+1)^{4s_n}(4s_n)! \sum_{\mathfrak{P}_{\text{even}}} \EE\left(\frac{1}{n^{2s_n(4/\alpha-\vep)}}  \bsY^{<}{\bsY^{<}}'({\mathcal P})\right)\\
	& \leq o(1) 2 (m+1)^{4s_n} 4^{4s_n} (2\e)^{2s_n}\frac{(4s_n)!}{(2s_n)! (2s_n+1)!}n\,,
	\end{align*}
	where the last inequality is given by the proof of Proposition \ref{lem:aboundofallevenpaths} and Proposition 20 in \cite{auffinger:arous:peche:2009}. Using the { Stirling formula}, for any $\gamma>0$  there exists $C>0$ such that
	\begin{align*}
	\PP&\left(  \lambda_{1} \(\frac{1}{n^{4/\alpha-\vep}}  \bsX^{<}{\bsX^{<}}' \) >  2 e(m+1)^2 (8+\gamma)^2\right) \\
	&\leq  \frac{\EE \left( \lambda_{1}\(\frac{1}{n^{4/\alpha-\vep}}  \bsX^{<}{\bsX^{<}}'\) \right)^{2s_n}}{ (2 \e(m+1)^2 (8+\gamma)^2)^{2s_n} } 
	\leq  \frac{\EE \left( \mathrm{Tr} \hspace{1pt} \left( \lambda_{1}\(\frac{1}{n^{4/\alpha-\vep}}  \bsX^{<}{\bsX^{<}}'\) \right)^{2s_n}  \right)}{ (2 \e(m+1)^2 (8+\gamma)^2)^{2s_n} } 
	\leq \exp{(-Cs_n)}. 
	\end{align*}
	Moreover, since $\frac{n^{2/\alpha-\epsilon}}{a_{np}}\to 0$ as $n\toi$, the above inequality implies that $\|\bsA^{<\vep_{n}}{\bsA^{<\vep_{n}}}'\|\to 0$ in probability.  
\end{proof}

Next, we analyze the remaining $\ccC_{2}$. As in \eqref{eq:2nd trunc}, we use the decomposition 
${\bsA^{>\vep_n}}={\bsA^{>\tvep_{n}}}+{\bsA^{(2)}}$. 
Then,
\begin{align}\label{c decomp}
\ccC_{2}
&=\bsA^{<\varepsilon_{n}}{\bsA^{>\varepsilon_{n}}}'+\bsA^{>\varepsilon_{n}}{\bsA^{<\varepsilon_{n}}}'+\bsA^{>\varepsilon_{n}}{\bsA^{>\varepsilon_{n}}}'\\
&\nn =\bsA^{<\varepsilon_{n}}{\left({\bsA^{>\tvep_{n}}}+{\bsA^{(2)}}\right)}'+\left({\bsA^{>\tvep_{n}}}+{\bsA^{(2)}}\right){\bsA^{<\varepsilon_{n}}}'+\left({\bsA^{>\tvep_{n}}}+{\bsA^{(2)}}\right)\left({\bsA^{>\tvep_{n}}}+{\bsA^{(2)}}\right)'.
\end{align}
From the proof of Lemma \ref{lem:2blocks} we know
 that at most one block is nonzero, in any given row of      $\bsA^{>\tvep_{n}}$, with probability going to one as $n\to\ff$. 
By the argument in the proof of Lemma \ref{lem:offdiagonal}, $\Vert\bsA^{>\tvep_{n}}{\bsA^{<\vep_{n}}}'\Vert \cip 0$. An application of Lemma \ref{lem:A(2)norm} shows that  $\Vert\bsA^{(2)}{\bsA^{<\tvep_{n}}}'\Vert$ and $\Vert\bsA^{(2)}{\bsA^{(2)}}'\Vert$ tend to $0$, in probability as $n\to\ff$. Moreover, from those three lemmas, we conclude that $\Vert\bsA^{>\tvep_{n}}{\bsA^{(2)}}'\Vert \to 0$  in probability, as $n\to\ff$, as well.

Using the decomposition in \eqref{c decomp} and Weyl's inequality, we have 
\begin{align*}
\max_{i=1,\ldots,p} \Big|\lambda_{i}\left(\bsA{\bsA}'\right) & -\lambda_{i}\left(\bsA^{>\tvep_{n}}{\bsA^{>\tvep_{n}}}'\right)\Big|
\leq \norm{\bsA^{<\vep_{n}}{\bsA^{<\vep_{n}}}'}+\norm{\bsA^{<\vep_{n}}{\bsA^{>\vep_{n}}}'}+\norm{\bsA^{>\vep_{n}}{\bsA^{<\vep_{n}}}'}\\
&\quad+\norm{\bsA^{(2)}{\bsA^{(2)}}'}+\norm{\bsA^{>\tvep_{n}}{\bsA^{(2)}}'}+\norm{\bsA^{(2)}{\bsA^{>\tvep_{n}}}'} \cip 0\,, \quad \nto\,.
\end{align*}
Again, repeating the arguments below Remark \ref{rem:beforeproof} concludes the proof in the sample covariance setting with $2\le \alpha<4$. 

\section*{Acknowledgements}   The work of B.~Basrak has been supported in part by {the SNSF/HRZZ Grant CSRP 2018-01-180549}.  The work of Y.~Cho and   P.~Jung was funded in part by the National
Research Foundation of Korea grant NRF-2017R1A2B2001952. J.~Heiny was supported by the Deutsche Forschungsgemeinschaft (DFG) via RTG 2131 High-dimensional Phenomena in Probability – Fluctuations and Discontinuity. J.H. thanks the Mathematics Department at KAIST for the hospitality.

\bibliographystyle{alpha}
\bibliography{bib2}
\end{document}